\newtheorem{thm}{Theorem}
\newtheorem*{mthm*}{Main Theorem}
\newtheorem{lem}{Lemma}
\newtheorem{prop}{Proposition}
\newtheorem{col}{Corollary}
\newtheorem*{col*}{Corollary 1}
\newcommand{\isEquivTo}[1]{\underset{#1}{\sim}}
\newcommand{\interior}[1]{%
  {\kern0pt#1}^{\mathrm{o}}%
}
\newcommand{\vol}{\mathrm{Vol}}
\title{Low dilatation pseudo-Anosovs on punctured surfaces and volume.}
\author{Shixuan Li}
\date{}
\begin{document}
\maketitle
\begin{abstract}
For a pseudo-Anosov homeomorphism $f$ on a closed surface of genus $g\geq 2$, for which the entropy is on the order $\frac{1}{g}$ (the lowest possible order), Farb-Leininger-Margalit showed that the volume of the mapping torus is bounded, independent of $g$. We show that the analogous result fails for a surface of fixed genus $g$ with $n$ punctures, by constructing pseudo-Anosov homeomorphism with entropy of the minimal order $\frac{\log n}{n}$, and volume tending to infinity.
\end{abstract}
\section{Introduction}

Let $l(g,n) = \min\{\log(\lambda(f)) | f : S_{g,n} \to S_{g,n}\}$ denote the logarithm of the minimal dilatation of a pseudo-Anosov $f$ on an orientable surface $S_{g,n}$ with genus $g$ and $n$ punctures, that is, the minimal topological entropy. When $n=0$, Penner showed that 
\[\frac{\log2}{12g-12}<l_{g,0}<\frac{\log 11}{g}.\] 
See \cite{penner}. These bounds have been improved since Penner's original work \cite{bound1,bound2,bound3,bound4,bound5,bound6}

To better understand where minimal dilatation pseudo-Anosov homeomorphism come from, in \cite{flm}, the authors consider the set 
\[
\Psi_L=\{f:S_{g,0}\to S_{g,0} | f \text{ is pseudo-Anosov, } \log(\lambda(f)) \leq \frac{L}{g}\}.
\]
They show that for any $L>0$ there exists finite number of hyperbolic 3-manifolds $M_1, \dots, M_n$, such that for each $f\in \Psi_L$, the mapping torus $M_f$ of $f$ is obtained by Dehn fillings on some $M_i$. See  \cite[Corollary 1.4]{flm}. As a consequence, the volume of $M_f$ is bounded by a constant depending only on $L$; see \cite[Corollary 1.5]{flm}.
See also \cite{agol2,kojima,brock}.

For punctured surfaces of a fixed genus, Tsai \cite{tsai} proved that $l_{g,n}$ has a different asymptotic behavior.
\begin{thm}[Tsai]
For any fixed $g\geq2$, for all $n\geq3$, there is a constant $c_g\geq1$ depending on $g$ such that
\[ \frac{\log n}{c_gn}< l_{g,n} < \frac{c_g\log{n}}{n}.\]
\end{thm}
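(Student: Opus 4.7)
The plan is to prove the two inequalities separately, since the upper and lower bounds require rather different tools.

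For the upper bound, the goal is to construct an explicit family of pseudo-Anosov maps $f_n \colon S_{g,n} \to S_{g,n}$ realising $\log \lambda(f_n) \leq c_g \log n / n$. A natural approach is to import a low-dilatation braid on an $n$-punctured disc into $S_{g,n}$: take a pseudo-Anosov braid with dilatation of order $n^{O(1/n)}$ (for example, one coming from Penner's construction on $S_{0,n+1}$), remove an embedded disc from $S_g$, glue the punctured disc in its place, and, if necessary, compose with a pseudo-Anosov on the complementary subsurface to prevent the gluing circle from being a reducing curve. Alternatively, one can run Penner's construction directly on $S_{g,n}$, choosing a pair of filling multicurves whose combinatorial pattern depends on $n$ in such a way that the associated integer train track transition matrix has size $O(n)$ and Perron--Frobenius eigenvalue of the required order.

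For the lower bound, I would use train track theory. Any pseudo-Anosov $f$ on $S_{g,n}$ is carried by an invariant train track $\tau$, and $\lambda(f)$ equals the Perron--Frobenius eigenvalue of the associated non-negative integer transition matrix $M$. Since the number of branches of $\tau$ is bounded by $C |\chi(S_{g,n})| = C(2g - 2 + n)$, for fixed $g$ the matrix $M$ has size $N = O(n)$. The heart of the argument is then a quantitative lower bound on the spectral radius of $M$. The naive estimate $\log \lambda \gtrsim 1/N$ is not enough; the extra $\log n$ factor should come from the observation that $f$ permutes the $n$ punctures, so a cycle of length close to $n$ in this permutation induces a corresponding long cycle in the combinatorial structure of $M$, which in turn forces $\operatorname{tr}(M^k)$ or a related spectral invariant to grow at a controlled polynomial rate in $n$.

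The principal obstacle is securing the $\log n$ factor in the lower bound. A direct application of Perron--Frobenius to integer matrices of size $O(n)$ only yields $\log \lambda \gtrsim 1/n$, strictly weaker than the claimed bound. Extracting the improvement requires a careful combinatorial analysis of how $f$ shuffles the branches of $\tau$ incident to the punctures, together with a refined spectral estimate for non-negative integer matrices that carry a distinguished long cycle. On the upper bound side, the main technical point is to verify that the constructed map is genuinely pseudo-Anosov (rather than reducible along the gluing curve) while preserving the dilatation estimate; this is handled either by appealing to Penner's criterion or by an explicit analysis of the invariant train track.
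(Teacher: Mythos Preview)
The paper does not prove this statement. Theorem~1 is quoted as a result of Tsai with a citation to \cite{tsai}; it appears in the introduction as background motivating the main theorem, and no argument for it is given anywhere in the paper. There is therefore no ``paper's own proof'' to compare your proposal against.

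As for the proposal itself: it is a strategy outline rather than a proof, and you candidly flag the main gap. For the upper bound, the idea of importing a punctured-disc braid into $S_{g,n}$ is in the right spirit, but as you note, the gluing circle is a reducing curve, and ``compose with a pseudo-Anosov on the complement'' does not obviously preserve the $\log n / n$ dilatation estimate. Tsai's actual construction sidesteps this by using branched covers: she lifts the Hironaka--Kin examples on punctured spheres to $S_{g,n}$ via an explicit $(2g+1)$-fold cover, so the lifted map is automatically pseudo-Anosov with the same dilatation. For the lower bound, you correctly identify that the naive Perron--Frobenius bound on an $O(n)\times O(n)$ integer matrix only yields $\log\lambda \gtrsim 1/n$, and that the extra $\log n$ must come from the puncture dynamics. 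Tsai's argument makes this precise: a small-dilatation pseudo-Anosov must have a long $f$-orbit of punctures (of length comparable to $n$), and passing to the power that closes this orbit, then filling those punctures, reduces to a surface with bounded complexity where Penner's lower bound applies. Your sketch gestures at this mechanism but does not supply it.
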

See also \cite{yazdi,yazdi2,valdivia,bound5}. For fixed $g\geq 2, n\geq 0$, let
\[
\Psi_{g,L}=\{f:S_{g,n}\to S_{g,n} | f \text{ is pseudo-Anosov, } \log(\lambda(f)) \leq L\frac{\log{n}}{n}\}.
\]
We show that the analogue of the results of \cite{flm} fail for $\Psi_{g,L}$. Specifically, we prove the following. 
\begin{mthm*}
For any fixed $g\geq 2$, and $L\geq 162g$, there exists a sequence $\{M_{f_i}\}_{i=1}^{\infty}$, with $f_i\in \Psi_{g,L}$, so that $\displaystyle {\lim_{n \to \infty} \vol(M_{f_i})\rightarrow \infty}$.
\end{mthm*}
As a consequence, we have the following.
\begin{col}
For any $g\geq 2$, there exists $L$ such that there is no finite set $\Omega$ of 3-manifolds so that for all $M_f$, $f\in \Psi_{g,L}$ are obtained by Dehn filling on some $M \in \Omega$.
\end{col}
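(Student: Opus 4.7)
The plan is to deduce Corollary 1 immediately from the Main Theorem by invoking Thurston's theorem that a non-trivial hyperbolic Dehn filling strictly decreases volume. First I would fix $g \geq 2$ and take $L = 162g$, the value supplied by the Main Theorem. This yields a sequence $f_i \in \Psi_{g,L}$ whose mapping tori satisfy $\vol(M_{f_i}) \to \infty$. Since each $f_i$ is pseudo-Anosov, Thurston's hyperbolization for mapping tori guarantees that each $M_{f_i}$ is hyperbolic of finite volume, so $\vol(M_{f_i})$ is the genuine hyperbolic volume.

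Next I would argue by contradiction. Suppose there were a finite set $\Omega = \{M_1, \ldots, M_k\}$ of hyperbolic, cusped 3-manifolds such that every $M_{f_i}$ is obtained by Dehn filling on some $M_{j(i)} \in \Omega$. Thurston's Dehn filling inequality would then give
\[ \vol(M_{f_i}) \leq \vol(M_{j(i)}) \leq \max_{1 \leq j \leq k} \vol(M_j), \]
a uniform finite upper bound on $\vol(M_{f_i})$, contradicting the conclusion of the Main Theorem. If one wished to allow $\Omega$ to contain non-hyperbolic pieces as well, I would replace Thurston's inequality by the Gromov--Thurston monotonicity of simplicial volume under Dehn filling, combined with the proportionality between simplicial and hyperbolic volume on the hyperbolic manifolds $M_{f_i}$, obtaining the same contradiction. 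There is no real obstacle in this deduction; the entire difficulty of the argument is packed into the construction carried out for the Main Theorem.
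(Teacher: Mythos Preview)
Your argument is correct and matches the paper's proof in substance: both take $L\geq 162g$, assume a finite $\Omega$ exists, and derive a uniform volume bound on the $M_{f_i}$ contradicting the Main Theorem. The paper goes directly through the Gromov norm route (your second alternative), using $\vol(M_f)\leq v_3\max_{M\in\Omega}\lVert[M]\rVert$, which handles arbitrary 3-manifolds in $\Omega$ without needing them hyperbolic; your primary phrasing via Thurston's hyperbolic Dehn filling inequality assumes $\Omega$ consists of hyperbolic manifolds, but since you explicitly note the simplicial volume fix, there is no gap.
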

The construction in the proof of the Main Theorem is based on the example in \cite{tsai} of $f_{g,n}:S_{g,n}\to S_{g,n}$ with 
\[
\log(\lambda(f_{g,n})) < \frac{c_g\log{n}}{n}.
\]
But for each $g$, one can show that $\{M_{f_{g,n}}\}_{n=1}^\infty$ are all obtained by Dehn fillings on a finite number of 3-manifolds, so we have to modify this construction. See also examples constructed by Kin-Takasawa \cite{bound5}. The idea is to compose $f_{g,n}$ with homeomorphisms supported in a subsurface of $S_{g,n}$ that become more and more complicated as $n$ gets larger. This has to be balanced with keeping the stretch factor bounded by a fixed multiple of $\frac{\log n }{n}$.

In Section 2 we recall some of the background we will need on fibered 3-manifold, hyperbolic geometry and Dehn surgery. In Section 3 we state Theorem \ref{main}, which is a version of the Main Theorem for punctured spheres based on a construction of \cite{hironaka}, then prove the Main Theorem based on that. In Section 4 we give the complete proof of Theorem \ref{main} by giving the construction of the sequence $\{M_{f_i}\}_{i=1}^{\infty}$, which are obtained by cutting open and gluing in an increasing numbers of copies of a certain manifold with totally geodesic boundary, then applying Dehn fillings.

Based on the Main Theorem, we have the following question. If we only consider the minimizers of the entropy, can we still find a sequence with unbounded volume?

\section{Background}
\subsection{Fibered 3-manifolds}
Let $S$ be a closed surface minus a finite number of points. We sometimes consider $S$ as a compact surface with boundary components, and will confuse punctures with boundary components when convenient (the former obtained from the latter by removing the boundary). The following theorem is from \cite{thurston}.

\begin{thm}[Thurston]
Any diffeomorphism $f$ on $S$ is isotopic to a map $f\textprime$ satisfying one of the following conditions:
\begin{enumerate}
\item[(i)]$f\textprime$ has finite order.
\item[(ii)]$f\textprime$ preserves a disjoint union of essential simple curves.
\item[(iii)]There exists $\lambda>1$ and two transverse measured foliations $\mathcal{F}^s$ and $\mathcal{F}^u$, called the stable and unstable foliations, respectively, such that 
\[f\textprime(\mathcal{F}^s)=(1/\lambda)\mathcal{F}^s, f\textprime(\mathcal{F}^u)=\lambda\mathcal{F}^u.\]
\end{enumerate}
\end{thm}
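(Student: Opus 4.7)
The plan is to follow Thurston's original approach via the action of the mapping class group on the compactified Teichmüller space. Let $\mathcal{T}(S)$ denote Teichmüller space and $\mathcal{PMF}(S)$ the space of projective measured foliations on $S$; I would invoke Thurston's theorem that $\mathcal{T}(S) \cup \mathcal{PMF}(S)$ is homeomorphic to a closed ball on which the mapping class group acts continuously. By the Brouwer fixed point theorem applied to this closed ball, the element $[f]$ fixes some point $x$, and the three cases of the trichotomy will correspond to the three possibilities for where $x$ lies.

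I would then case-split on the location of $x$. If $x \in \mathcal{T}(S)$, then a hyperbolic structure is fixed by $[f]$, so $[f]$ is realized by an isometry of this structure; since the isometry group of a finite-area hyperbolic surface is finite, case (i) follows. If $x \in \mathcal{PMF}(S)$ is represented by a measured foliation $\mathcal{F}$ containing a closed leaf, then the set of closed leaves of $\mathcal{F}$ consists of finitely many essential simple closed curves preserved by $[f]$, giving case (ii). The remaining possibility is that $\mathcal{F}$ is arational, and its transverse measure is scaled by some $\lambda > 0$ under any representative of $[f]$. Applying the same Brouwer argument to $[f^{-1}]$ yields a second invariant arational foliation $\mathcal{G}$ with scaling factor $\mu$; a computation with the bilinear intersection pairing $i(\cdot,\cdot)$ on measured foliations (which $[f]$ preserves up to scaling both entries) shows $\mu = 1/\lambda$. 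Transversality of $\mathcal{F}$ and $\mathcal{G}$ then endows $S$ with a singular flat structure whose horizontal and vertical foliations are $\mathcal{F}$ and $\mathcal{G}$, and $[f]$ is realized by an affine self-map multiplying the transverse measures by $\lambda$ and $1/\lambda$. Setting $\mathcal{F}^u = \mathcal{F}$, $\mathcal{F}^s = \mathcal{G}$, and replacing $f$ by $f^{-1}$ if necessary, one arranges $\lambda > 1$, giving case (iii).

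The main obstacle is the arational case: passing from the mere existence of a projectively fixed foliation to a genuine affine representative. This requires establishing (a) transversality of the two foliations obtained from $[f]$ and $[f^{-1}]$, (b) that both are of finite type with finitely many singularities so they assemble into a well-defined singular flat structure, and (c) that the isotopy class is actually realized by a self-map affine in the local flat charts. A cleaner implementation avoids much of this by using Teichmüller theory: $[f]$ acts by isometries on $\mathcal{T}(S)$ in the Teichmüller metric, and the arational fixed-point case corresponds exactly to $[f]$ translating along a Teichmüller geodesic, whose horizontal and vertical foliations are the desired $\mathcal{F}^u$ and $\mathcal{F}^s$ and whose translation length is $\log \lambda$.
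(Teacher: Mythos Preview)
The paper does not give its own proof of this statement: it is quoted as background (attributed to Thurston, with a citation to \cite{thurston}) and used without argument. So there is nothing in the paper to compare your proposal against.

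That said, your sketch follows the standard Thurston strategy and is broadly on the right track. A few points you would need to tighten if you actually wanted a proof rather than an outline. First, the dichotomy ``has a closed leaf / is arational'' is not exhaustive as stated: a projective measured foliation fixed by $[f]$ may have no closed leaves yet fail to be arational (it can have a minimal component carried on a proper subsurface), and in that case the boundary of the subsurface, not a closed leaf, furnishes the reducing system for case~(ii). Second, in the arational case you must rule out $\lambda=1$ separately (or show it forces finite order), and you must argue that the two fixed projective classes coming from $[f]$ and $[f^{-1}]$ are distinct before invoking $i(\mathcal F,\mathcal G)\ne 0$; otherwise the computation $\lambda\mu\, i(\mathcal F,\mathcal G)=i(\mathcal F,\mathcal G)$ is vacuous. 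Your final paragraph, passing to the Teichm\"uller geodesic picture, is the cleanest way to handle these issues and is how the argument is usually completed in modern expositions (e.g.\ Fathi--Laudenbach--Po\'enaru or Farb--Margalit).
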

The three cases are called {\em periodic}, {\em reducible} and {\em pseudo-Anosov} respectively. The number $\lambda=\lambda(f)$ in case (iii) is called the {\em stretch factor} of $f$.
The topological entropy of pseudo-Anosov homeomorphism $f:S\to S$ is $\log (\lambda(f))$

Let $M$ be the interior of a compact, connected, orientable, irreducible, atoroidal 3-manifold that fibers over $S^1$ with fiber $S\subset M$ and monodromy $f$. That is, $M$ is the mapping torus of $f$: 
\[M=M_f=S\times[0,1]  / (x,1)\sim(f(x),0).\] 
Then $S$ is a closed orientable surface with a finite number of punctures and negative Euler characteristic, and $f$ is pseudo-Anosov with a unique expanding invariant foliation up to isotopy. Associated to $(M,S)$ we also have 
\begin{enumerate}
\item[(i)]$F\subset H^1(M,\mathbb{R})$, the open face of the unit ball in Thurston norm with $[S]\in (F\cdot \mathbb{R}^+)$. See \cite{thurstonnorm}.
\item[(ii)]A suspension flow $\psi$ on $M$, and a 2-dimensional foliation obtained by suspending the stable and unstable foliation of $f$. See \cite{fried1}.
\end{enumerate}
$F$ is called a {\em fibred face} of the Thurston norm ball. The segments 
\[{x}\times [0,1] \subset S \times [0,1]\]
glued together in $M_f$ are leaves of the 1-dimensional foliation $\Psi$ of M, the flow lines of $\psi$.

The following theorem is from \cite{fried1} and \cite{fried2}.
\begin{thm}[Fried]\label{fried}
Let $(M,S)$, $F$ and $\Psi$ be as above. Then any integral class in $F\cdot \mathbb{R}^+$ is represented by a fiber $S\textprime$ of a fibration of $M$ over the circle which can be isotoped to be transverse to $\Psi$, and the first return map of $\psi$ coincides with the pseudo-Anosov monodromy $f\textprime$, up to isotopy. Moreover, if $S\textprime \subset M$ is any orientable  surface with $S\textprime \pitchfork \Psi$, then $[S\textprime]\in \overline{F\cdot \mathbb{R}^+}$.
\end{thm}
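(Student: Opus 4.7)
The plan is to translate the statement into the language of closed 1-forms and positivity against the suspension flow. Let $X$ denote the vector field generating $\psi$. An integral class $\alpha \in H^1(M,\mathbb{R})$ produces a fibration $M \to S^1$ whose fibers are transverse to $\psi$ precisely when $\alpha$ admits a closed 1-form representative $\omega$ with $\omega(X) > 0$ pointwise: integrating $\omega$ along paths from a basepoint yields a circle-valued function with level sets transverse to the flow. The hypothesis on $(M,S)$ supplies such a representative for $[S]$, namely the pullback $dt$ from the mapping torus structure, for which $dt(X) \equiv 1$.

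Write $C \subset H^1(M,\mathbb{R})$ for the set of classes admitting a closed 1-form strictly positive on $X$. Since positivity on $X$ is open in the $C^0$ topology on 1-forms and preserved under positive convex combinations, $C$ is an open convex cone containing $[S]$. For the first assertion I would identify $C$ with $F \cdot \mathbb{R}^+$. The inclusion $C \subseteq F \cdot \mathbb{R}^+$ follows from the fact that the integral classes in $C$ fiber $M$ and therefore sit inside the union of open fibered cones of Thurston norm; connectedness and openness of $C$, together with $[S] \in C \cap (F \cdot \mathbb{R}^+)$, then force $C \subseteq F \cdot \mathbb{R}^+$. The reverse inclusion $F \cdot \mathbb{R}^+ \subseteq C$ is the main technical content and is established by Fried through a direct construction of a positive-on-$X$ closed 1-form for each class in the Thurston cone.

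Next I would verify the properties of the first-return map $f'$ on an integral transverse section $S'$. The foliations $\mathcal{F}^s$ and $\mathcal{F}^u$ of $f$ suspend to 2-dimensional measured foliations on $M$ preserved by $\psi$, with transverse measures rescaled by factors $e^{\pm t}$ along flow time $t$. Intersecting these with $S'$ yields 1-dimensional transverse measured foliations on $S'$ invariant under $f'$ with stretch factor equal to the exponential of the average first-return time; hence $f'$ is pseudo-Anosov. The identification of $M_{f'}$ with $M$ as a mapping torus is automatic since $S'$ is a cross-section to $\psi$.

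For the final assertion, let $S' \subset M$ be any orientable surface with $S' \pitchfork \Psi$, oriented so that $X$ crosses $S'$ positively. Its Poincaré dual class $[S']$ is represented by a closed 1-form $\omega$ supported in a collar of $S'$ with $\omega(X) \geq 0$ everywhere, placing $[S']$ in $\overline{C} = \overline{F \cdot \mathbb{R}^+}$. The main obstacle throughout will be the inclusion $F \cdot \mathbb{R}^+ \subseteq C$: the other steps are routine applications of de Rham theory and the basic dynamics of pseudo-Anosov suspensions, but this step requires showing that every integral class in the Thurston fibered face of $[S]$ not only fibers $M$ but does so with a fiber transverse to the specific flow $\psi$, which is the heart of Fried's contribution.
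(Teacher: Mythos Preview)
The paper does not prove this theorem. Theorem~\ref{fried} is stated as background material in Section~2.1 and attributed to Fried with citations to \cite{fried1} and \cite{fried2}; no argument is given or even sketched in the paper itself. So there is no ``paper's own proof'' against which to compare your proposal.

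That said, your outline is a reasonable summary of the standard approach to Fried's result: identifying the open cone of cohomology classes representable by closed $1$-forms strictly positive on the suspension vector field with the Thurston fibered cone, and deducing the transversality and monodromy statements from that. You correctly isolate the inclusion $F\cdot\mathbb{R}^+ \subseteq C$ as the substantive step, and the rest of your sketch (openness and convexity of $C$, the pseudo-Anosov structure on the first-return map via intersecting the suspended foliations, the Poincar\'e-dual argument for the closure statement) is accurate. If you were asked to supply a proof for this background theorem, what you have written would be an appropriate roadmap, though of course the hard inclusion would need to be carried out in detail rather than deferred to Fried.
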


If $f: S\to S$ is pseudo-Anosov on a surface with punctures, and $G\subset S$ is a spine, then we can homotope $f$ to a map $g: S\to G$ so that $g|_G:G\to G$ a graph map; that is, $g$ sends vertices to vertices and edges to edge paths. The growth rate of $g|_G$ is the largest absolute value of any eigenvalue of the Perron-Frobenious block of the transition matrix $T$ induced by $g$, and is an upper bound for $\lambda(f)$, see \cite{bestvina}.

The Perron-Frobenius Theorem tells that largest eigenvalue of a Perron-Frobenius matrix is bounded above by the largest row sum of the matrix. Recall that associated to a non-negative integral matrix $T=\{e_{ij}\}, 1\leq i,j \leq n$ is a directed graph $\Gamma$, where $\{V_1, V_2, \dots, V_n\}$ is the vertex set of $\Gamma$ corresponding to the columns/rows of $T$, and $e_{ij}$ represents the number of  edges pointing from $V_i$ to $V_j$. We have the following proposition. See \cite{gantmacher}.

\begin{prop}\label{pf}
Let $\Gamma$ be the directed graph of an integral Perron-Frobenius matrix $T$ with eigenvalue $\lambda$. Let $N(V_i,l)$ be the number of length-$l$ paths emanating from vertex $V_i$ in $\Gamma$. Then 
$\lambda \leq \max_i{{N(V_i,l)}}$.
\end{prop}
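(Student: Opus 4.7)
The plan is to apply the classical Perron--Frobenius row-sum bound, but to the matrix $T^l$ rather than $T$ itself. The key combinatorial observation is standard: the $(i,j)$-entry of $T^l$ counts the directed paths of length $l$ in $\Gamma$ from $V_i$ to $V_j$, so the $i$-th row sum of $T^l$ is precisely $N(V_i, l)$.

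With that identification in hand, I would invoke the Perron--Frobenius theorem to produce a strictly positive eigenvector $v$ with $Tv = \lambda v$, whence $T^l v = \lambda^l v$ with the same $v$. For each index $i$, non-negativity of the entries of $T^l$ gives
\[
\lambda^l v_i \;=\; \sum_j (T^l)_{ij}\, v_j \;\leq\; \bigl(\max_j v_j\bigr) \sum_j (T^l)_{ij} \;=\; \bigl(\max_j v_j\bigr) N(V_i, l).
\]
Specializing to an index $i$ that realizes $v_i = \max_j v_j$ and canceling yields $\lambda^l \leq \max_i N(V_i, l)$, hence $\lambda \leq \bigl(\max_i N(V_i, l)\bigr)^{1/l}$. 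In the only nontrivial case $\lambda \geq 1$, the right-hand side is bounded above by $\max_i N(V_i, l)$ itself, which is the stated inequality.

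I do not expect any substantive obstacle; the argument is Perron--Frobenius essentially unpacked. The only point that deserves a sentence of care is the existence of the strictly positive eigenvector, which is built into the definition of a Perron--Frobenius matrix (equivalently, into the irreducibility of $\Gamma$). If preferred, positivity can be bypassed entirely by observing that $\max_i N(V_i, l)$ is precisely the $\ell^\infty$-to-$\ell^\infty$ operator norm of $T^l$, and any operator norm bounds the spectral radius, giving $\lambda^l \leq \max_i N(V_i, l)$ directly.
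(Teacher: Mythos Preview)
Your proof is correct, and there is nothing in the paper to compare it against: the paper does not prove this proposition but simply refers the reader to Gantmacher. Your argument---apply the row-sum bound to $T^l$, using that the $i$-th row sum of $T^l$ equals $N(V_i,l)$---is the standard one.

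One remark worth making: the intermediate inequality you obtain, $\lambda^{l}\le \max_i N(V_i,l)$, is actually the form the paper \emph{uses} later (in the proof of Lemma~4 it writes $\log\lambda_0 \le \dfrac{\log(2nN_k)}{n/13}$, i.e.\ $\lambda_0^{\,n/13}\le 2nN_k$). The proposition as stated, $\lambda\le \max_i N(V_i,l)$, is a weakening of this, valid because an integral Perron--Frobenius matrix has every row sum at least~$1$, hence $\lambda\ge 1$ and $\lambda\le\lambda^l$. So your derivation not only proves the proposition as written but already contains the sharper estimate that the application requires; you might phrase the ``$\lambda\ge 1$'' step that way rather than calling it ``the only nontrivial case.''
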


\subsection{Hyperbolic geometry}

\begin{figure}[!ht]\centering \label{AA}

\tikzset{every picture/.style={line width=0.75pt}} 



\caption{Left: $\Sigma_4$. Middle: $A_0$. Right: $A$.}  
\end{figure}

The following construction is given by Agol in \cite{agol}.
Let $\Sigma_4$ denote the 4-puntured sphere, and let $\delta_0, \delta_1 \subset \Sigma_4$ be two circles on $\Sigma_4$ shown in Figure 1. Let $A_0$ be $\Sigma_4\times [0,1]\backslash (\delta_0 \times \{0\} \cup \delta_1 \times \{1\})$. Let $V_8$ denote the volume of a regular, ideal, hyperbolic octahedron.

\begin{prop}[Agol]\label{agol}
$A_0$ has complete hyperbolic metric with totally geodesic boundary, with $\vol(A_0)=2V_8$.
\end{prop}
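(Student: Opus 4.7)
The plan is to decompose $A_0$ into two regular ideal hyperbolic octahedra, each of volume $V_8$, glued along a central surface, following the strategy of \cite{agol}. The starting observation is that $\delta_0$ and $\delta_1$ each separate the four punctures of $\Sigma_4$ as $2+2$ and meet transversely in two points; together with the punctures they equip $\Sigma_4$ with a CW decomposition with six $0$-cells (four punctures, two intersection points), eight $1$-cells (the arcs of $\delta_0\cup\delta_1$), and four quadrilateral $2$-cells, each containing one puncture.

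I would then cut $A_0$ by a central surface of the form $\Sigma_4\times\{1/2\}$, producing two homeomorphic pieces $A_0^\pm$. Each piece is homeomorphic to $\Sigma_4\times[0,1/2]$ with one of $\delta_0,\delta_1$ deleted from the outer face. Using the CW structure above, one checks that each piece is combinatorially an ideal octahedron: its six ideal vertices are the four punctures of $\Sigma_4$ (cusp-ideal) together with the two points of $\delta_0\cap\delta_1$ lifted to the central surface (totally-geodesic-ideal), and a direct count produces twelve edges and eight triangular faces. Endowing each combinatorial octahedron with the complete regular ideal hyperbolic structure, whose dihedral angles are all $\pi/2$ and whose volume is $V_8$, additivity of volume then yields $\vol(A_0)=2V_8$.

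The remaining task is to verify that the face pairings reassembling $A_0^+ \cup A_0^-$ satisfy the standard conditions for a complete hyperbolic metric with totally geodesic boundary: every interior edge has dihedral sum $2\pi$, every edge on the boundary has dihedral sum $\pi$, and every cusp cross-section is a complete Euclidean surface. Because every dihedral angle equals $\pi/2$, these reduce to the combinatorial statements that interior edges are shared by four faces, boundary edges by two, and that the Euclidean square vertex links at the puncture cusps assemble into flat tori. The main obstacle is precisely this combinatorial bookkeeping; it is guided by the manifest $\mathbb{Z}/2$ symmetry swapping the two halves and by the symmetries of $\Sigma_4$ permuting the punctures. Once the gluing conditions are checked, Mostow--Prasad rigidity guarantees that the resulting metric is \emph{the} complete hyperbolic structure on $A_0$.
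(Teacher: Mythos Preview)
The paper does not actually prove this proposition---it is quoted from Agol \cite{agol} without argument---so there is no proof in the paper to compare against. Your proposed argument, however, has a genuine gap.

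Cutting $A_0$ along the central surface $\Sigma_4\times\{1/2\}$ does \emph{not} produce two ideal octahedra. Each resulting piece $A_0^{\pm}$ deformation retracts onto $\Sigma_4$ (just push to the level-$\tfrac12$ end), so $\pi_1(A_0^{\pm})\cong F_3$; since an ideal polyhedron is a topological ball, $A_0^{\pm}$ cannot be one. Concretely, the two points of $\delta_0\cap\delta_1$ at level $\tfrac12$ are ordinary interior points of $A_0$---nothing has been drilled there---so they cannot play the role of ideal vertices. Your CW structure on $\Sigma_4$ is also inconsistent: the four punctures lie off $\delta_0\cup\delta_1$, hence in the \emph{interiors} of the complementary regions (which are bigons, bounded by one arc of each $\delta_i$), not at $0$--cells; with your count one gets $\chi=6-8+4=2$, the Euler characteristic of $S^2$ rather than $\Sigma_4$.

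The decomposition that works cuts in the $\Sigma_4$--direction, not the $I$--direction. Realize $\Sigma_4$ as a square pillowcase: two unit squares glued along their boundaries, punctures at the four corners, with $\delta_0$ and $\delta_1$ the horizontal and vertical equators. Then $A_0$ splits as two cubes glued along their four side faces. In each cube the four vertical edges (puncture${}\times I$) are ideal, one arc of $\delta_0$ is drilled from the bottom face, and a perpendicular arc of $\delta_1$ from the top; collapsing each drilled arc to an ideal vertex yields six ideal vertices, twelve edges, and eight triangular faces---an ideal octahedron. Every edge lies on exactly one boundary (top/bottom) face and one glued side face, so the dihedral sum at each edge is $\tfrac{\pi}{2}+\tfrac{\pi}{2}=\pi$, giving totally geodesic boundary, and the square vertex links assemble into Euclidean annuli at the cusps. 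This is the complete structure with $\vol(A_0)=2V_8$.
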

For our purpose, it is more useful to draw the 4-punctured sphere as a 3-punctured disk, then $A$ and $A_0$ are manifolds shown in Figure 2.
Let $A$ denote the manifold obtained by isometrically gluing two copies of $A_0$ along $\Sigma_4\times \{0\}\backslash (\delta_0 \times \{0\})$, then we have 
\[A\cong\Sigma_4\times [0,1]\backslash (\delta_1 \times \{0,1\} \cup \delta_0 \times \{1/2\})\]
and $A$ is a hyperbolic 3-manifold with totally geodesic boundary and \[\vol(A)=4V_8\].

We will also need the following theorem, due to Adams \cite{adams}.
\begin{thm}[Adams]\label{adams}
Any properly embedded incompressible thrice-punctured sphere in a hyperbolic 3-manifold $M$ is isotopic to a totally geodesic properly embedded thrice-punctured sphere in $M$.
\end{thm}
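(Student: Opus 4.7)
The plan is to exploit the rigidity of the complete hyperbolic structure on the thrice-punctured sphere: any faithful discrete representation of $\pi_1(P)$ into $\mathrm{PSL}(2,\mathbb{C})$ sending all three peripheral loops to parabolic elements is Fuchsian. Let $P \subset M$ be the given properly embedded incompressible thrice-punctured sphere, and view $\pi_1(M) \subset \mathrm{PSL}(2,\mathbb{C}) = \mathrm{Isom}^+(\mathbb{H}^3)$. Incompressibility yields an injection $\pi_1(P) \hookrightarrow \pi_1(M)$, and properness implies that each of the three peripheral loops of $P$ is freely homotopic in $M$ into a cusp, so its image is parabolic. Fix free generators $a, b$ of $\pi_1(P) \cong F_2$ representing loops around two of the punctures, so that $ab$ is peripheral around the third; then $a, b, ab$ all have parabolic image.

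The central computation is to show that the image $\Gamma$ of $\pi_1(P)$ in $\mathrm{PSL}(2,\mathbb{C})$ is conjugate into $\mathrm{PSL}(2,\mathbb{R})$. Normalize the image of $a$ to be $\bigl(\begin{smallmatrix} 1 & 1 \\ 0 & 1 \end{smallmatrix}\bigr)$, fixing $\infty$. Writing the image of $b$ as $\bigl(\begin{smallmatrix} p & q \\ r & s \end{smallmatrix}\bigr)$ with $ps - qr = 1$, parabolicity of $b$ and $ab$ gives $p + s \in \{\pm 2\}$ and $p + r + s \in \{\pm 2\}$, forcing $r \in \{0, \pm 4\}$. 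The value $r = 0$ would give $a, b$ a common fixed point at $\infty$, violating faithfulness. Using conjugation by the centralizer of $a$ (upper-triangular unipotents) and a sign flip in $\mathrm{PSL}(2,\mathbb{C})$, one further normalizes $p = s = 1$, whence $q = 0$ and $r = -4$. Both generators then lie in $\mathrm{PSL}(2,\mathbb{R})$, so $\Gamma$ is Fuchsian.

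Let $H \subset \mathbb{H}^3$ be the invariant hyperbolic plane of $\Gamma$; then the cover $N := \mathbb{H}^3/\Gamma$ of $M$ splits isometrically as $(H/\Gamma) \times \mathbb{R}$, and $P_0 := H/\Gamma$ is a totally geodesic properly embedded thrice-punctured sphere. The original $P$ lifts homeomorphically to $\widetilde{P} \subset N$, properly embedded and incompressible, and $\widetilde{P}$ is homotopic to $P_0$ in the product $N$. Standard topology of incompressible surfaces in a product shows $\widetilde{P}$ is isotopic to $P_0$; restricting the isotopy to a product neighborhood of $\widetilde{P} \cup P_0$ chosen small enough to be disjoint from its nontrivial deck translates, it descends to the desired isotopy in $M$.

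The hardest step is the rigidity computation, which is essentially the assertion that the quasi-conformal deformation space of the thrice-punctured sphere is trivial: the three parabolicity conditions, together with the gauge freedom of conjugation, leave no free complex parameter and force the representation into $\mathrm{PSL}(2,\mathbb{R})$.
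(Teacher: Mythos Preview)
The paper does not prove this theorem; it is quoted from Adams with a citation and used as a black box, so there is no in-paper proof to compare against. Your argument follows Adams' own strategy, and the Fuchsian rigidity computation (the trace conditions forcing $r\in\{0,\pm4\}$ and then $b=\bigl(\begin{smallmatrix}1&0\\-4&1\end{smallmatrix}\bigr)$ after normalization) is correct and is indeed the heart of the matter. One minor inaccuracy: $N=\mathbb{H}^3/\Gamma$ is diffeomorphic but not isometric to $(H/\Gamma)\times\mathbb{R}$; only the topological product is needed, and Waldhausen then gives the isotopy $\widetilde P\simeq P_0$ in $N$.

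The genuine gap is the last sentence. Saying you can restrict to a neighborhood of $\widetilde P\cup P_0$ ``disjoint from its nontrivial deck translates'' presupposes exactly what remains to be shown: that the totally geodesic $P_0\subset N$ maps injectively to $M$, equivalently that no $g\in\pi_1(M)\setminus\Gamma$ sends the invariant plane $H$ to a plane meeting $H$. The case $gH=H$ is handled by Euler characteristic: any torsion-free discrete overgroup $\Gamma'\supsetneq\Gamma$ stabilizing $H$ would yield a surface $H/\Gamma'$ covered by $P_0$, with $\chi(H/\Gamma')=-1/[\Gamma':\Gamma]\notin\mathbb{Z}$. But the transverse case $gH\ne H$, $gH\cap H\ne\emptyset$ needs a real argument that actually uses the hypothesis that $P$ is embedded. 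One standard route is Freedman--Hass--Scott: a least-area (hence in particular totally geodesic) incompressible surface homotopic to an embedding is itself embedded; once $P_0$ is known to embed in $M$, Waldhausen in $M$ finishes. Without some such step, the descent as you wrote it does not go through.
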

From this theorem one easily obtains the following.
\begin{col}
A disjoint union of pairwise non-isotopic properly embedded thrice-punctured spheres in a hyperbolic 3-manifold $M$ are simultaneously isotopic to pairwise disjoint totally geodesic thrice-punctured spheres in $M$.
\end{col}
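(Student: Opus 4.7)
The strategy is to reduce the corollary to Theorem~\ref{adams} by first showing that the \emph{unique} totally geodesic representatives in the isotopy classes of the $P_i$'s are automatically pairwise disjoint, and then assembling the individual isotopies into a simultaneous ambient isotopy.

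First, apply Theorem~\ref{adams} to each component $P_i$ separately, obtaining a totally geodesic thrice-punctured sphere $G_i\subset M$ isotopic to $P_i$. A key preliminary observation is that $G_i$ is the unique totally geodesic representative of its isotopy class: passing to the universal cover $\mathbb{H}^3$, any totally geodesic plane isotopic to a lift of $P_i$ is the hyperbolic plane whose ideal boundary on $\partial_\infty \mathbb{H}^3$ contains the three specific parabolic fixed points coming from the three peripheral subgroups of $\pi_1(P_i)\subset\pi_1(M)$. These fixed points depend only on the isotopy class of $P_i$, so $G_i$ is uniquely determined, and since the $P_i$ are pairwise non-isotopic, the $G_i$ are pairwise distinct.

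The main step is to show that $G_i\cap G_j=\emptyset$ for $i\neq j$. The idea is that totally geodesic surfaces in a hyperbolic 3-manifold are least area in their isotopy classes, so they realize the minimal geometric intersection number with any other incompressible surface; since $P_i$ and $P_j$ are disjointly embedded, this minimal intersection number is zero, forcing $G_i\cap G_j=\emptyset$. Equivalently, one can argue directly in the universal cover: if two lifts $\widetilde G_i,\widetilde G_j$ met in a geodesic, then their triples of ideal parabolic fixed points on $\partial_\infty\mathbb{H}^3$ would have to link on the boundary sphere, and any properly embedded disks with the same ideal cusps lifting $P_i, P_j$ would then also have to intersect, contradicting the disjointness of $P_i$ and $P_j$ in $M$.

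Finally, assemble the individual straightenings into a simultaneous ambient isotopy. Once disjointness of the $G_i$ is in hand, I would proceed inductively: having already isotoped $P_1,\ldots,P_{i-1}$ to disjoint totally geodesic $G_1,\ldots,G_{i-1}$, cut $M$ along $G_1\sqcup\cdots\sqcup G_{i-1}$ to obtain a hyperbolic 3-manifold with totally geodesic boundary in which the remaining $P_i,\ldots,P_k$ sit as properly embedded incompressible thrice-punctured spheres, then apply a relative version of Theorem~\ref{adams} to straighten $P_i$ without crossing the boundary. The main obstacle is the disjointness step: Theorem~\ref{adams} only straightens one surface at a time, so the essential extra ingredient is the least-area property of totally geodesic surfaces (or, equivalently, the linking argument on $\partial_\infty\mathbb{H}^3$) that forces the straightened representatives to remain disjoint.
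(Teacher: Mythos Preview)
The paper does not actually prove this corollary; it simply asserts that it ``easily'' follows from Theorem~\ref{adams}. Your proposal supplies exactly the kind of argument one would expect here, and it is essentially correct: uniqueness of the totally geodesic representative via the parabolic fixed points, disjointness of the $G_i$ via a linking/least-area argument in the universal cover, and then an inductive cutting procedure to upgrade to a simultaneous ambient isotopy.

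One small point of phrasing: in your linking argument you speak of ``triples of ideal parabolic fixed points'' of the lifts $\widetilde G_i$. A lift of a thrice-punctured sphere to $\mathbb{H}^3$ is a full totally geodesic plane whose ideal boundary is an entire circle in $S^2_\infty$, not just three points (the stabilizer is a free group of rank two, and infinitely many parabolic fixed points lie on that circle). The three parabolic fixed points you have in mind do determine that circle, so your uniqueness argument is fine; but for the disjointness step the cleaner statement is that the boundary \emph{circles} of $\widetilde G_i$ and $\widetilde G_j$ link in $S^2_\infty$, and hence any properly embedded planes properly isotopic to them must intersect. With that adjustment the argument goes through, and since all lifts of $P_i$ are disjoint from all lifts of $P_j$, you get the desired contradiction. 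Your inductive assembly in the last step is also the standard way to finish, and is in fact a self-contained route to disjointness as well (cut along $G_1$, observe the complement is again hyperbolic with totally geodesic boundary, and re-apply Adams), so the two halves of your argument reinforce each other.
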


\subsection{Dehn surgery}

Let $M$ be a compact 3-manifold with boundary $\partial M=\partial_1M\sqcup \dots \sqcup \partial_kM$ so that the interior of $M$ is a complete hyperbolic manifold, where $\partial_iM$ is a torus for any $1\leq i\leq k$. 
Choose a basis $\mu_i,\nu_i$ for $H_1(\partial_i M)=\pi_1(\partial_i M)$. Then the isotopy class of any essential simple closed curve $\beta_i$ on $\partial_iM$, called a {\em slope}, is represented by $p_i\mu_i+q_i\nu_i$ in $H_1(\partial_iM)$ for coprime integer $p_i,q_i$. Since we do not care about orientation of $\beta_i$, we use the notation $\beta_i=\frac{p_i}{q_i}\in\mathbb{Q}\cup\{\infty\}$.
Given $\beta=(\beta_1,\dots, \beta_k)$, where each $\beta_i$ is a slope, let $M_\beta$ denote the manifold obtained by gluing a solid torus to each $\partial_iM$, where $\beta_i$ is the slope $\partial_iM$ identified with the meridian of the corresponding solid torus.
We call $\beta=\{\beta_1,\dots,\beta_k\}$ the Dehn surgery coefficients.

The following is from \cite{thurstonnote,dehnsurgery}.
\begin{thm}[Thurston]\label{thurston}
If the interior of $M$ is a complete hyperbolic 3-manifold of finite volume and $\beta=\{\beta_1,\dots,\beta_k\}$ are the Dehn surgery coefficients, then for all but finitely many slopes $\beta_i$, for each $i$, $M_\beta$ is hyperbolic and $\vol(M_\beta)<\vol(M)$. If $\beta^n=(\beta^n_1,\dots, \beta^n_k)$, with $\{\beta_i^n\}_{n=1}^{\infty}$ an infinite sequence of distinct slopes on $\partial_iM$ for each $1\leq i\leq n$, then $\displaystyle{\lim_{n \to \infty} \vol(M_{\beta^n})\rightarrow \vol(M)}$.
\end{thm}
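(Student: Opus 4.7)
The plan is to use Thurston's deformation-theoretic approach from his Princeton notes. Let $\rho_0: \pi_1(M) \to \text{PSL}(2, \mathbb{C})$ denote the discrete faithful representation associated with the complete hyperbolic structure on the interior of $M$. First, I would set up the deformation space: using an ideal triangulation of $M$, write the gluing and completeness equations in the complex shape parameters of the ideal tetrahedra. Thurston's local rigidity computation shows that near the complete solution, the space of solutions (dropping the completeness equations) is a smooth complex $k$-manifold, which I would parameterize by the logarithmic holonomy parameters $u_i$ of the meridians $\mu_i$ at each cusp.

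Second, I would identify which deformations represent honest Dehn fillings. For the $(p_i, q_i)$-filling of the $i$-th cusp, the curve $\beta_i = p_i\mu_i + q_i\nu_i$ must bound a disk in the attached solid torus, so its holonomy must be a rotation by $2\pi$; in terms of logarithmic holonomies this reads
\[ p_i\, u_i + q_i\, v_i(u) = 2\pi i, \]
where $v_i(u)$ is the logarithmic holonomy of the longitude $\nu_i$, a holomorphic function of $u = (u_1, \dots, u_k)$ with $v_i(0)$ equal to the cusp shape times $u_i$ to leading order. For each $k$-tuple of slopes $\beta = (\beta_1, \dots, \beta_k)$, this is a system of $k$ equations in $k$ unknowns.

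Third, I would solve the system. For the complete structure all $u_i = 0$; the implicit function theorem, applied after verifying that the relevant Jacobian is invertible once each $|p_i|^2 + |q_i|^2$ is sufficiently large, produces a unique small solution. The resulting incomplete hyperbolic metric on the interior of $M$ completes to a smooth finite-volume hyperbolic metric on $M_\beta$, because the added boundary circles become the core geodesics of the attached solid tori. The finitely many excluded slopes per cusp are those where some tetrahedral shape parameter degenerates (becomes real or flips orientation) before the filling equation can be solved. For any sequence $\beta^n$ of slopes going to infinity in each cusp, the associated solutions satisfy $u_i \to 0$, so the filled structures converge to the complete structure.

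Fourth, I would handle the volume statement. The volume function $V$ is real-analytic on the deformation space, and a second-variation computation (equivalent to the Schläfli formula for cone manifolds) shows that $V$ attains a strict local maximum at the complete structure $u = 0$. This gives $\vol(M_\beta) < \vol(M)$ whenever the solution lies in a sufficiently small neighborhood of $0$, i.e., for all but finitely many slopes per cusp. Continuity of $V$ together with $u_i \to 0$ then yields $\vol(M_{\beta^n}) \to \vol(M)$. The main obstacle is Thurston's nontrivial rigidity computation showing the gluing-equation Jacobian is invertible at the complete structure — this is the analytical heart of hyperbolic Dehn surgery and the place where Mostow rigidity and the specific geometry of ideal tetrahedra genuinely enter the argument.
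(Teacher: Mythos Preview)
Your sketch is a faithful outline of the standard proof from Thurston's notes, but note that the paper does not actually prove this theorem: it is quoted as background and attributed to \cite{thurstonnote,dehnsurgery} with no argument given. So there is no ``paper's own proof'' to compare against; your approach is precisely the one in the cited references.
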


Let $||[M]||$ denote the {\em Gromov norm} of the fundamental class $[M]\in H_3(M;\partial M)$. Then we have the following two theorems. See \cite[Theorem 6.2, Proposition 6.5.2, Lemma 6.5.4]{thurstonnote}
\begin{thm}[Gromov]\label{gromov}
If the interior of $M$ admits a complete hyperbolic metric of finite volume, then
\[
||[M]||=\frac{\vol(M)}{v_3}.
\]
\end{thm}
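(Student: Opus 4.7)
My plan is to prove both inequalities $\vol(M) \leq v_3 \|[M]\|$ and $v_3 \|[M]\| \leq \vol(M)$; since the interior of $M$ carries a complete finite-volume hyperbolic structure, $M$ may have cusps, so throughout I would work with relative singular chains in $C_*(M,\partial M)$ with real coefficients and use ideal straightening in the universal cover. For the upper bound, I take any relative cycle $z = \sum a_i \sigma_i$ representing $[M]$ with $\sum |a_i| \leq \|[M]\| + \epsilon$, lift each $\sigma_i$ to $\mathbb{H}^3 \cup \partial_\infty \mathbb{H}^3$, and replace it by the geodesic simplex $\mathrm{str}(\sigma_i)$ spanned by the same (possibly ideal) vertices. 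A standard chain homotopy shows the straightened chain still represents $[M]$. Integrating the hyperbolic volume form gives $\vol(M) = \sum a_i \int_{\mathrm{str}(\sigma_i)} \omega$, and since every geodesic simplex in $\mathbb{H}^3$ has volume at most $v_3$ (the regular ideal tetrahedron being the unique maximizer, by Milnor, sharpened by Haagerup--Munkholm), one obtains $\vol(M) \leq v_3 \sum |a_i| \leq v_3(\|[M]\| + \epsilon)$; letting $\epsilon \to 0$ finishes the easy direction.

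The reverse inequality is the main obstacle and requires Thurston's \emph{smearing} construction. Fix a positively oriented regular ideal tetrahedron $\Delta_0 \subset \mathbb{H}^3$ with ordered vertices, and write $M = \mathbb{H}^3/\Gamma$. For each $g \in G := \mathrm{Isom}^+(\mathbb{H}^3)$, the ordered ideal tetrahedron $g \cdot \Delta_0$ descends to a straight ideal simplex in $M$. I would integrate the assignment $g \mapsto g \cdot \Delta_0$ against Haar measure over a fundamental domain for the $\Gamma$-action on $G$ by left translation (whose total measure is proportional to $\vol(M)$), symmetrize over the orientation-preserving symmetry group of $\Delta_0$ to enforce the cycle condition, and normalize so that the resulting real-coefficient \emph{smeared chain} $z$ has $L^1$-mass equal to $\vol(M)/v_3$. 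One then checks that $z$ is a cycle and that pairing $z$ against the hyperbolic volume form recovers $\vol(M)$ exactly, so $z$ represents the fundamental class.

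To conclude the statement about $\|[M]\|$, which is defined via finite real-coefficient singular chains, I would approximate $z$ by finite linear combinations of straight ideal simplices at arbitrarily small cost in $\ell^1$-norm, which yields $\|[M]\| \leq \vol(M)/v_3$; combined with the upper bound, this gives equality. The hard part is genuinely the smearing step in the cusped setting: one must verify that the Haar integral is finite, that the boundary contributions arising near the cusps vanish after symmetrization, and that the approximation by finite chains lands in the correct relative homology class. Thurston's notes \cite{thurstonnote} handle these technical points by carefully controlling the geometry of ideal simplices inside the cusp neighborhoods.
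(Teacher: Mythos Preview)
The paper does not give its own proof of this theorem: it is quoted as background, attributed to Gromov, with a reference to Thurston's lecture notes \cite[Theorem~6.2, Proposition~6.5.2, Lemma~6.5.4]{thurstonnote}. Your outline is precisely the standard Gromov--Thurston argument contained in that reference, so in that sense your proposal agrees with what the paper invokes.

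One small imprecision worth flagging: the device that makes the smeared chain a cycle is not symmetrization over the orientation-preserving symmetry group $A_4$ of $\Delta_0$. Rather, after smearing over $\Gamma\backslash\mathrm{Isom}^+(\mathbb{H}^3)$, each face $\partial_i\Delta_0$ produces the \emph{same} smeared $2$-chain (since $\mathrm{Isom}^+(\mathbb{H}^3)$ acts transitively on ordered ideal triangles), and the alternating sum $\sum(-1)^i$ vanishes; in the cusped case one typically also subtracts the smear of a reflected copy $\bar\Delta_0$ to control what happens near the boundary and to pass to genuine relative singular chains. You correctly identify this passage---from the measure chain to finite relative cycles with nearly the same $\ell^1$-mass---as the genuinely delicate step, and defer to \cite{thurstonnote} for it, which is exactly what the paper does.
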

\begin{thm}[Thurston]\label{thurston1}
For any Dehn fillings with Dehn surgery coefficients $\beta=\{\beta_1,\dots,\beta_k\}$,
\[
||[M_\beta]||\leq||[M]||.
\]
\end{thm}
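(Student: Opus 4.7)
The plan is to produce, from any $\ell^1$-efficient relative cycle for $(M,\partial M)$, an absolute cycle for $M_\beta$ of essentially the same $\ell^1$-norm, by filling in the boundary inside the glued-in solid tori.

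First I would fix $\epsilon>0$ and choose a singular real $3$-chain $c\in C_3(M;\mathbb{R})$ representing $[M,\partial M]$ with $\|c\|_1\leq \|[M,\partial M]\| + \epsilon$ \emph{and}, crucially, $\|\partial c\|_1<\epsilon$. The existence of a representative whose boundary is also $\ell^1$-small is the one nontrivial ingredient: it is Gromov's amenable reduction principle, which uses that each component of $\partial M$ is a torus $T^2$ with amenable fundamental group $\mathbb{Z}^2$. Because amenability forces the Gromov seminorm to vanish on $H_*(T^2)$, one can modify any representative of $[\partial M]\in H_2(\partial M)$ by arbitrarily $\ell^1$-small adjustments, and a straightforward diagonalization of these adjustments against an approximately efficient relative cycle produces the desired $c$.

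Next I would fill in the boundary inside $M_\beta$. Each attaching solid torus $V_i$ satisfies $H_2(V_i)=0$, so the $2$-cycle $\partial c|_{\partial_i M}$ bounds in $V_i$. Concretely, pick a basepoint $p_i\in V_i$ and let $d_i$ be the cone on $\partial c\cap\partial_iM$ over $p_i$: for each singular $2$-simplex $\sigma$ occurring in $\partial c|_{\partial_iM}$, straight-line-join $\sigma$ to $p_i$ in local coordinates on $V_i$ to form a singular $3$-simplex, producing a chain $d_i\in C_3(V_i;\mathbb{R})$ with $\partial d_i=\partial c|_{\partial_iM}$ and $\|d_i\|_1\leq \|\partial c|_{\partial_iM}\|_1$. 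Then $\tilde c = c+\sum_i d_i$ is a genuine $3$-cycle in $M_\beta$. A degree/integration check shows $[\tilde c]=[M_\beta]\in H_3(M_\beta;\mathbb{R})$: pairing with a volume form supported in the interior of $M\subset M_\beta$, the $d_i$ contribute nothing, and $\int_c\omega=\int_M\omega=\int_{M_\beta}\omega$, identifying the coefficient as $1$. Therefore
\[
\|[M_\beta]\| \;\leq\; \|\tilde c\|_1 \;\leq\; \|c\|_1+\sum_i\|d_i\|_1 \;\leq\; \|c\|_1+\|\partial c\|_1 \;\leq\; \|[M,\partial M]\|+2\epsilon,
\]
and letting $\epsilon\to 0$ yields $\|[M_\beta]\|\leq \|[M]\|$.

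The main obstacle is precisely the construction of the representative $c$ with simultaneously small $\|c\|_1$ and small $\|\partial c\|_1$ in the first step. The naive bound $\|\partial c\|_1\leq 4\|c\|_1$ (coming from the fact that each $3$-simplex has four $2$-faces) would only give the cruder inequality $\|[M_\beta]\|\leq 4\|[M]\|$; obtaining the sharp statement genuinely requires Gromov's amenability theorem for the toroidal boundary. Once that reduction is in hand, the cone-filling construction and the degree computation are routine.
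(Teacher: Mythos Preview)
The paper does not actually prove this theorem: it is stated as a background result with a citation to Thurston's notes \cite[Theorem~6.2, Proposition~6.5.2, Lemma~6.5.4]{thurstonnote}, and is used only as a black box in the proof of Corollary~1. So there is no ``paper's own proof'' to compare against.

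That said, your outline is essentially the standard argument one finds in Thurston's notes, and it is correct. The two substantive points are exactly the ones you identify: (i) Gromov's amenability principle, which lets you choose a relative cycle $c$ representing $[M,\partial M]$ with $\|c\|_1$ close to $\|[M,\partial M]\|$ and $\|\partial c\|_1$ arbitrarily small, using that $\pi_1(T^2)\cong\mathbb{Z}^2$ is amenable; and (ii) the cone-filling step inside each attached solid torus, which adds at most $\|\partial c\|_1$ to the $\ell^1$-norm. The degree verification can be phrased more robustly via the collapse map $M_\beta\to M_\beta/(\bigcup_i V_i)\simeq M/\partial M$, which has degree one and sends $[\tilde c]$ to $[c]=[M,\partial M]$, avoiding any smoothness assumption on $M_\beta$; but your volume-form argument is fine in the hyperbolic setting the paper cares about.
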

We will be interested in a special case of Dehn surgery in which $M$ is obtained from a mapping torus  
\[M_f=S\times [0,1]/(x,1)\sim(f(x),0)\]
by removing neighborhoods of disjoint curves $\alpha_1, \alpha_2, \dots, \alpha_k$, $\alpha_i \subset S\times \{t_i\}$, for some \[0<t_1<t_2<\dots<t_k<1.\] 
Then we can choose basis $\mu_i,\nu_i$ of $H_1(\partial_i M)$, so that if $\beta_i=\frac{1}{r_i}$, then 
\[M_{\beta}=M_{T_{\alpha_k}^{r_k}T_{\alpha_{k-1}}^{r_{k-1}} \dots T_{\alpha_1}^{r_1}f}.\]
See, for example, \cite{stallings}.

\section{Reduction}
Consider the sphere with $n+m+2$ puntures $S_{0,n+m+2}$. We can distribute the punctures as shown in Figure 3. Let $x$, $y$ and $z$ be the three of the punctures as shown. Let $X,Y\subset S_{0,n+m+2}$ be two embedded punctured disks centered at $x$ and $y$ as shown in Figure 3. There are $n$ punctures in $X$ arranged around $x$, $m$ punctures in $Y$ arranged around $y$, with one puncture shared in $X$ and $Y$. Let $p_n$ denote the homeomorphism which is supported inside $X$, fixes $x$ and rotates the punctures around $x$ by one counterclockwise. Let $q_m$ denote the homeomorphism which is supported inside $Y$,  fixes $y$ and rotates the punctures around $y$ by one clockwise. For any $n,m>6$, let $f_{n,m}:S_{0,n+m+2} \rightarrow S_{0,n+m+2}$ be $f_{n,m}=q_mp_n$. These homeomorphisms $f_{n,m}$ were constructed by Hironaka and Kin in \cite{hironaka} and were shown to be pseudo-Anosov.

Let $V_1, V_2, \dots, V_n$ be the punctures in $X$, starting with $V_1$ in $X\cap Y$, ordered counter-clockwise, as shown in Figure 3. Let $\Sigma_0 \subset S_{0,n+m+2}$ be the subsurface containing 3 consecutive punctures $\{V_i, V_{i+1}, V_{i+2}\}$, with $\partial\Sigma_0=\beta$ as shown in Figure 3. Let $\alpha,\gamma\subset\Sigma_0$ be the two essential closed curves shown.

We will consider the composition $hf^3_{n,m}$, where $h: S_{0,n+m+2}\to  S_{0,n+m+2}$ is a homeomorphism supported in $\Sigma_0$. Note that if we replace $h$ by $p_n^khp^{-k}_n$ for $1\leq k \leq n-(i+3)$, which is supported on $p^k_n(\Sigma_0)$, then $q_m$ commutes with $p_n^jhp_n^{-j}$ for $1\leq j \leq k$. So we have

\begin{equation}\notag
\begin{split}
f^k_{n,m}hf^3_{n,m}f^{-k}_{n,m}
 & = f^{k-1}_{n,m}q_m(p_nhp_n^{-1})p_nf^{-k+3}_{n,m} \\
 & = f^{k-1}_{n,m}(p_nhp_n^{-1})q_mp_nf^{-k+3}_{n,m} \\
 & = f^{k-1}_{n,m}(p_nhp_n^{-1})f^{-k+4}_{n,m} \\
 & = f^{k-2}_{n,m}q_m(p^2_nhp_n^{-2})p_nf^{-k+4}_{n,m} \\
 & = \dots \\
 & = q_m(p_n^{k}hp_n^{-k})p_nf^{2}_{n,m} \\
 & =(p_n^{k}hp_n^{-k})f^{3}_{n,m} \\
\end{split}
\end{equation}

That is, $hf^3_{n,m}$ is conjugate to $p_n^khp^{-k}_nf^3_{n,m}$. In particular, we can assume $\Sigma_0$ surrounds $V_i,V_{i+1},V_{i+2}$ for any $2\leq i\leq n-5$ at the expense of conjugation which does not affect stretch factor or the homeomorphism type of mapping torus. For this reason, in the following statements, $\Sigma_0$ is allowed to surround the punctures $V_i,V_{i+1},V_{i+2}$ for any  $2\leq i\leq n-5$.

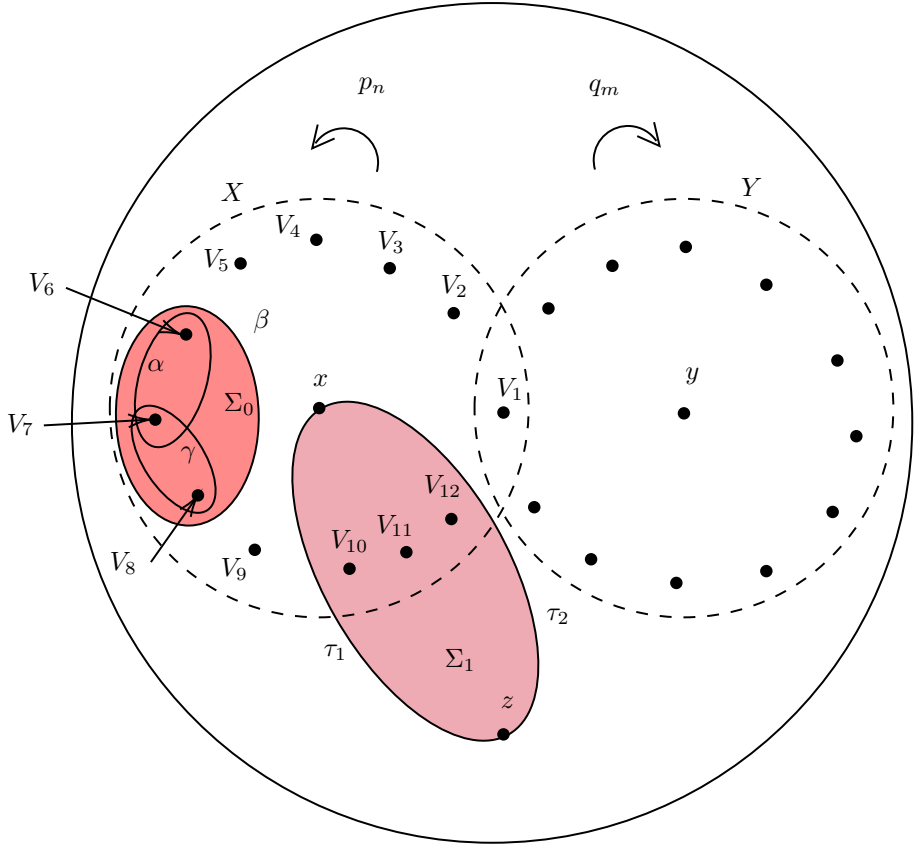
\begin{figure}[!ht]\centering

\tikzset{every picture/.style={line width=0.75pt}} 

\begin{tikzpicture}[x=0.75pt,y=0.75pt,yscale=-1,xscale=1]

\draw  [fill={rgb, 255:red, 255; green, 0; blue, 0 }  ,fill opacity=0.46 ] (134.36,270.73) .. controls (114.48,271.15) and (97.85,246.7) .. (97.21,216.12) .. controls (96.57,185.55) and (112.16,160.42) .. (132.04,160.01) .. controls (151.92,159.59) and (168.56,184.04) .. (169.2,214.62) .. controls (169.84,245.19) and (154.24,270.32) .. (134.36,270.73) -- cycle ;
\draw  [fill={rgb, 255:red, 208; green, 2; blue, 27 }  ,fill opacity=0.33 ] (201.13,211.42) .. controls (223.45,198.66) and (262.61,225.2) .. (288.61,270.68) .. controls (314.61,316.17) and (317.59,363.38) .. (295.28,376.14) .. controls (272.96,388.89) and (233.8,362.36) .. (207.8,316.87) .. controls (181.8,271.38) and (178.82,224.17) .. (201.13,211.42) -- cycle ;
\draw   (75,218.92) .. controls (75,101.88) and (169.88,7) .. (286.92,7) .. controls (403.96,7) and (498.83,101.88) .. (498.83,218.92) .. controls (498.83,335.96) and (403.96,430.83) .. (286.92,430.83) .. controls (169.88,430.83) and (75,335.96) .. (75,218.92) -- cycle ;
\draw  [dash pattern={on 4.5pt off 4.5pt}] (94.11,211.45) .. controls (94.11,153.12) and (141.39,105.84) .. (199.72,105.84) .. controls (258.05,105.84) and (305.33,153.12) .. (305.33,211.45) .. controls (305.33,269.78) and (258.05,317.06) .. (199.72,317.06) .. controls (141.39,317.06) and (94.11,269.78) .. (94.11,211.45) -- cycle ;
\draw  [dash pattern={on 4.5pt off 4.5pt}] (278.06,211.45) .. controls (278.06,153.12) and (325.34,105.84) .. (383.67,105.84) .. controls (441.99,105.84) and (489.28,153.12) .. (489.28,211.45) .. controls (489.28,269.78) and (441.99,317.06) .. (383.67,317.06) .. controls (325.34,317.06) and (278.06,269.78) .. (278.06,211.45) -- cycle ;
\draw  [fill={rgb, 255:red, 0; green, 0; blue, 0 }  ,fill opacity=1 ] (290,213.69) .. controls (290,212.23) and (291.18,211.05) .. (292.64,211.05) .. controls (294.1,211.05) and (295.28,212.23) .. (295.28,213.69) .. controls (295.28,215.15) and (294.1,216.33) .. (292.64,216.33) .. controls (291.18,216.33) and (290,215.15) .. (290,213.69) -- cycle ;
\draw  [fill={rgb, 255:red, 0; green, 0; blue, 0 }  ,fill opacity=1 ] (290,376.14) .. controls (290,374.68) and (291.18,373.5) .. (292.64,373.5) .. controls (294.1,373.5) and (295.28,374.68) .. (295.28,376.14) .. controls (295.28,377.59) and (294.1,378.77) .. (292.64,378.77) .. controls (291.18,378.77) and (290,377.59) .. (290,376.14) -- cycle ;
\draw  [fill={rgb, 255:red, 0; green, 0; blue, 0 }  ,fill opacity=1 ] (197.08,211.45) .. controls (197.08,209.99) and (198.26,208.81) .. (199.72,208.81) .. controls (201.18,208.81) and (202.36,209.99) .. (202.36,211.45) .. controls (202.36,212.91) and (201.18,214.09) .. (199.72,214.09) .. controls (198.26,214.09) and (197.08,212.91) .. (197.08,211.45) -- cycle ;
\draw  [fill={rgb, 255:red, 0; green, 0; blue, 0 }  ,fill opacity=1 ] (381.03,214.09) .. controls (381.03,212.63) and (382.21,211.45) .. (383.67,211.45) .. controls (385.12,211.45) and (386.31,212.63) .. (386.31,214.09) .. controls (386.31,215.55) and (385.12,216.73) .. (383.67,216.73) .. controls (382.21,216.73) and (381.03,215.55) .. (381.03,214.09) -- cycle ;
\draw  [fill={rgb, 255:red, 0; green, 0; blue, 0 }  ,fill opacity=1 ] (264.92,163.52) .. controls (264.92,162.07) and (266.1,160.89) .. (267.56,160.89) .. controls (269.01,160.89) and (270.19,162.07) .. (270.19,163.52) .. controls (270.19,164.98) and (269.01,166.16) .. (267.56,166.16) .. controls (266.1,166.16) and (264.92,164.98) .. (264.92,163.52) -- cycle ;
\draw  [fill={rgb, 255:red, 0; green, 0; blue, 0 }  ,fill opacity=1 ] (232.67,140.83) .. controls (232.67,139.37) and (233.85,138.19) .. (235.31,138.19) .. controls (236.76,138.19) and (237.94,139.37) .. (237.94,140.83) .. controls (237.94,142.29) and (236.76,143.47) .. (235.31,143.47) .. controls (233.85,143.47) and (232.67,142.29) .. (232.67,140.83) -- cycle ;
\draw  [fill={rgb, 255:red, 0; green, 0; blue, 0 }  ,fill opacity=1 ] (263.72,267.44) .. controls (263.72,265.98) and (264.91,264.8) .. (266.36,264.8) .. controls (267.82,264.8) and (269,265.98) .. (269,267.44) .. controls (269,268.9) and (267.82,270.08) .. (266.36,270.08) .. controls (264.91,270.08) and (263.72,268.9) .. (263.72,267.44) -- cycle ;
\draw  [fill={rgb, 255:red, 0; green, 0; blue, 0 }  ,fill opacity=1 ] (312.7,161.13) .. controls (312.7,159.68) and (313.88,158.5) .. (315.33,158.5) .. controls (316.79,158.5) and (317.97,159.68) .. (317.97,161.13) .. controls (317.97,162.59) and (316.79,163.77) .. (315.33,163.77) .. controls (313.88,163.77) and (312.7,162.59) .. (312.7,161.13) -- cycle ;
\draw  [fill={rgb, 255:red, 0; green, 0; blue, 0 }  ,fill opacity=1 ] (344.95,139.63) .. controls (344.95,138.18) and (346.13,137) .. (347.59,137) .. controls (349.04,137) and (350.22,138.18) .. (350.22,139.63) .. controls (350.22,141.09) and (349.04,142.27) .. (347.59,142.27) .. controls (346.13,142.27) and (344.95,141.09) .. (344.95,139.63) -- cycle ;
\draw  [fill={rgb, 255:red, 0; green, 0; blue, 0 }  ,fill opacity=1 ] (381.98,130.08) .. controls (381.98,128.62) and (383.16,127.44) .. (384.61,127.44) .. controls (386.07,127.44) and (387.25,128.62) .. (387.25,130.08) .. controls (387.25,131.54) and (386.07,132.72) .. (384.61,132.72) .. controls (383.16,132.72) and (381.98,131.54) .. (381.98,130.08) -- cycle ;
\draw  [fill={rgb, 255:red, 0; green, 0; blue, 0 }  ,fill opacity=1 ] (241.03,284.16) .. controls (241.03,282.71) and (242.21,281.53) .. (243.67,281.53) .. controls (245.12,281.53) and (246.31,282.71) .. (246.31,284.16) .. controls (246.31,285.62) and (245.12,286.8) .. (243.67,286.8) .. controls (242.21,286.8) and (241.03,285.62) .. (241.03,284.16) -- cycle ;
\draw  [fill={rgb, 255:red, 0; green, 0; blue, 0 }  ,fill opacity=1 ] (195.64,126.5) .. controls (195.64,125.04) and (196.82,123.86) .. (198.28,123.86) .. controls (199.73,123.86) and (200.92,125.04) .. (200.92,126.5) .. controls (200.92,127.95) and (199.73,129.13) .. (198.28,129.13) .. controls (196.82,129.13) and (195.64,127.95) .. (195.64,126.5) -- cycle ;
\draw  [fill={rgb, 255:red, 0; green, 0; blue, 0 }  ,fill opacity=1 ] (157.42,138.44) .. controls (157.42,136.98) and (158.6,135.8) .. (160.06,135.8) .. controls (161.51,135.8) and (162.69,136.98) .. (162.69,138.44) .. controls (162.69,139.9) and (161.51,141.08) .. (160.06,141.08) .. controls (158.6,141.08) and (157.42,139.9) .. (157.42,138.44) -- cycle ;
\draw  [fill={rgb, 255:red, 0; green, 0; blue, 0 }  ,fill opacity=1 ] (212.36,292.53) .. controls (212.36,291.07) and (213.54,289.89) .. (215,289.89) .. controls (216.46,289.89) and (217.64,291.07) .. (217.64,292.53) .. controls (217.64,293.98) and (216.46,295.16) .. (215,295.16) .. controls (213.54,295.16) and (212.36,293.98) .. (212.36,292.53) -- cycle ;
\draw  [fill={rgb, 255:red, 0; green, 0; blue, 0 }  ,fill opacity=1 ] (164.58,282.97) .. controls (164.58,281.51) and (165.77,280.33) .. (167.22,280.33) .. controls (168.68,280.33) and (169.86,281.51) .. (169.86,282.97) .. controls (169.86,284.43) and (168.68,285.61) .. (167.22,285.61) .. controls (165.77,285.61) and (164.58,284.43) .. (164.58,282.97) -- cycle ;
\draw  [fill={rgb, 255:red, 0; green, 0; blue, 0 }  ,fill opacity=1 ] (135.92,255.5) .. controls (135.92,254.04) and (137.1,252.86) .. (138.56,252.86) .. controls (140.01,252.86) and (141.19,254.04) .. (141.19,255.5) .. controls (141.19,256.95) and (140.01,258.13) .. (138.56,258.13) .. controls (137.1,258.13) and (135.92,256.95) .. (135.92,255.5) -- cycle ;
\draw  [fill={rgb, 255:red, 0; green, 0; blue, 0 }  ,fill opacity=1 ] (305.53,261.47) .. controls (305.53,260.01) and (306.71,258.83) .. (308.17,258.83) .. controls (309.62,258.83) and (310.81,260.01) .. (310.81,261.47) .. controls (310.81,262.93) and (309.62,264.11) .. (308.17,264.11) .. controls (306.71,264.11) and (305.53,262.93) .. (305.53,261.47) -- cycle ;
\draw  [fill={rgb, 255:red, 0; green, 0; blue, 0 }  ,fill opacity=1 ] (334.2,287.75) .. controls (334.2,286.29) and (335.38,285.11) .. (336.84,285.11) .. controls (338.29,285.11) and (339.47,286.29) .. (339.47,287.75) .. controls (339.47,289.2) and (338.29,290.39) .. (336.84,290.39) .. controls (335.38,290.39) and (334.2,289.2) .. (334.2,287.75) -- cycle ;
\draw  [fill={rgb, 255:red, 0; green, 0; blue, 0 }  ,fill opacity=1 ] (377.2,299.69) .. controls (377.2,298.24) and (378.38,297.05) .. (379.84,297.05) .. controls (381.29,297.05) and (382.47,298.24) .. (382.47,299.69) .. controls (382.47,301.15) and (381.29,302.33) .. (379.84,302.33) .. controls (378.38,302.33) and (377.2,301.15) .. (377.2,299.69) -- cycle ;
\draw  [fill={rgb, 255:red, 0; green, 0; blue, 0 }  ,fill opacity=1 ] (422.59,293.72) .. controls (422.59,292.26) and (423.77,291.08) .. (425.22,291.08) .. controls (426.68,291.08) and (427.86,292.26) .. (427.86,293.72) .. controls (427.86,295.18) and (426.68,296.36) .. (425.22,296.36) .. controls (423.77,296.36) and (422.59,295.18) .. (422.59,293.72) -- cycle ;
\draw  [fill={rgb, 255:red, 0; green, 0; blue, 0 }  ,fill opacity=1 ] (456.03,263.86) .. controls (456.03,262.4) and (457.21,261.22) .. (458.67,261.22) .. controls (460.13,261.22) and (461.31,262.4) .. (461.31,263.86) .. controls (461.31,265.31) and (460.13,266.5) .. (458.67,266.5) .. controls (457.21,266.5) and (456.03,265.31) .. (456.03,263.86) -- cycle ;
\draw  [fill={rgb, 255:red, 0; green, 0; blue, 0 }  ,fill opacity=1 ] (467.98,225.64) .. controls (467.98,224.18) and (469.16,223) .. (470.61,223) .. controls (472.07,223) and (473.25,224.18) .. (473.25,225.64) .. controls (473.25,227.09) and (472.07,228.27) .. (470.61,228.27) .. controls (469.16,228.27) and (467.98,227.09) .. (467.98,225.64) -- cycle ;
\draw  [fill={rgb, 255:red, 0; green, 0; blue, 0 }  ,fill opacity=1 ] (114.42,217.27) .. controls (114.42,215.82) and (115.6,214.64) .. (117.05,214.64) .. controls (118.51,214.64) and (119.69,215.82) .. (119.69,217.27) .. controls (119.69,218.73) and (118.51,219.91) .. (117.05,219.91) .. controls (115.6,219.91) and (114.42,218.73) .. (114.42,217.27) -- cycle ;
\draw  [draw opacity=0] (197.81,77.81) .. controls (200.67,73.6) and (205.36,70.69) .. (210.83,70.28) .. controls (220.36,69.57) and (228.65,76.72) .. (229.36,86.24) .. controls (229.52,88.32) and (229.3,90.33) .. (228.77,92.22) -- (212.12,87.53) -- cycle ; \draw   (197.81,77.81) .. controls (200.67,73.6) and (205.36,70.69) .. (210.83,70.28) .. controls (220.36,69.57) and (228.65,76.72) .. (229.36,86.24) .. controls (229.52,88.32) and (229.3,90.33) .. (228.77,92.22) ;
\draw   (207.47,75.36) -- (196.19,80.35) -- (198.97,68.33) ;

\draw  [draw opacity=0] (369.76,76.62) .. controls (366.89,72.41) and (362.21,69.5) .. (356.74,69.09) .. controls (347.21,68.38) and (338.91,75.52) .. (338.2,85.05) .. controls (338.05,87.12) and (338.27,89.14) .. (338.8,91.03) -- (355.45,86.33) -- cycle ; \draw   (369.76,76.62) .. controls (366.89,72.41) and (362.21,69.5) .. (356.74,69.09) .. controls (347.21,68.38) and (338.91,75.52) .. (338.2,85.05) .. controls (338.05,87.12) and (338.27,89.14) .. (338.8,91.03) ;
\draw   (360.1,74.16) -- (371.37,79.15) -- (368.6,67.14) ;

\draw  [fill={rgb, 255:red, 0; green, 0; blue, 0 }  ,fill opacity=1 ] (129.95,174.27) .. controls (129.95,172.82) and (131.13,171.64) .. (132.58,171.64) .. controls (134.04,171.64) and (135.22,172.82) .. (135.22,174.27) .. controls (135.22,175.73) and (134.04,176.91) .. (132.58,176.91) .. controls (131.13,176.91) and (129.95,175.73) .. (129.95,174.27) -- cycle ;
\draw  [fill={rgb, 255:red, 0; green, 0; blue, 0 }  ,fill opacity=1 ] (422.59,149.19) .. controls (422.59,147.73) and (423.77,146.55) .. (425.22,146.55) .. controls (426.68,146.55) and (427.86,147.73) .. (427.86,149.19) .. controls (427.86,150.65) and (426.68,151.83) .. (425.22,151.83) .. controls (423.77,151.83) and (422.59,150.65) .. (422.59,149.19) -- cycle ;
\draw  [fill={rgb, 255:red, 0; green, 0; blue, 0 }  ,fill opacity=1 ] (458.42,187.41) .. controls (458.42,185.96) and (459.6,184.78) .. (461.06,184.78) .. controls (462.52,184.78) and (463.7,185.96) .. (463.7,187.41) .. controls (463.7,188.87) and (462.52,190.05) .. (461.06,190.05) .. controls (459.6,190.05) and (458.42,188.87) .. (458.42,187.41) -- cycle ;
\draw   (135.62,163.85) .. controls (144.69,166.49) and (147.69,183.63) .. (142.3,202.13) .. controls (136.92,220.63) and (125.19,233.48) .. (116.11,230.84) .. controls (107.03,228.2) and (104.04,211.06) .. (109.42,192.56) .. controls (114.81,174.06) and (126.54,161.21) .. (135.62,163.85) -- cycle ;
\draw   (109.28,211.55) .. controls (116.25,206.94) and (129.51,214.72) .. (138.9,228.92) .. controls (148.29,243.12) and (150.26,258.37) .. (143.28,262.98) .. controls (136.31,267.59) and (123.05,259.81) .. (113.66,245.61) .. controls (104.27,231.41) and (102.31,216.16) .. (109.28,211.55) -- cycle ;
\draw    (72,150.75) -- (128.09,173.52) ;
\draw [shift={(129.95,174.27)}, rotate = 202.1] [color={rgb, 255:red, 0; green, 0; blue, 0 }  ][line width=0.75]    (10.93,-3.29) .. controls (6.95,-1.4) and (3.31,-0.3) .. (0,0) .. controls (3.31,0.3) and (6.95,1.4) .. (10.93,3.29)   ;

\draw    (60.83,220.17) -- (112.42,217.38) ;
\draw [shift={(114.42,217.27)}, rotate = 536.9100000000001] [color={rgb, 255:red, 0; green, 0; blue, 0 }  ][line width=0.75]    (10.93,-3.29) .. controls (6.95,-1.4) and (3.31,-0.3) .. (0,0) .. controls (3.31,0.3) and (6.95,1.4) .. (10.93,3.29)   ;

\draw    (114.83,289.17) -- (137.4,257.13) ;
\draw [shift={(138.56,255.5)}, rotate = 485.17] [color={rgb, 255:red, 0; green, 0; blue, 0 }  ][line width=0.75]    (10.93,-3.29) .. controls (6.95,-1.4) and (3.31,-0.3) .. (0,0) .. controls (3.31,0.3) and (6.95,1.4) .. (10.93,3.29)   ;

\draw (200.42,197.42) node   {$x$};
\draw (387.95,195.03) node   {$y$};
\draw (294.78,359.86) node   {$z$};
\draw (226.7,48.81) node   {$p_{n}$};
\draw (343.75,50) node   {$q_{m}$};
\draw (270.89,338.76) node   {$\Sigma _{1}$};
\draw (156.22,102.26) node   {$X$};
\draw (417.81,99.87) node   {$Y$};
\draw (159.5,209.54) node   {$\Sigma _{0}$};
\draw (207.92,334.55) node   {$\tau _{1}$};
\draw (320.2,316.63) node   {$\tau _{2}$};
\draw (170.62,167.51) node   {$\beta $};
\draw (117.37,189.94) node   {$\alpha $};
\draw (133.81,235.28) node   {$\gamma $};
\draw (296,201.75) node   {$V_{1}$};
\draw (268,149.75) node   {$V_{2}$};
\draw (236,127.75) node   {$V_{3}$};
\draw (184,119.75) node   {$V_{4}$};
\draw (148,136.75) node   {$V_{5}$};
\draw (60,148.75) node   {$V_{6}$};
\draw (49,219.75) node   {$V_{7}$};
\draw (101,289.75) node   {$V_{8}$};
\draw (157,292.75) node   {$V_{9}$};
\draw (215,278.75) node   {$V_{10}$};
\draw (238,270.75) node   {$V_{11}$};
\draw (262,250.75) node   {$V_{12}$};

\end{tikzpicture}

\caption{$S_{0,n+m+2}$ for $n=m=12$}  
\end{figure}

\begin{thm}\label{main}
For any $k=1,2,3,\dots$, there exists $B_k$ such that if 
\[h_k=T_{\alpha}^{u_1}T_{\gamma}^{v_1}\dots T_{\alpha}^{u_{k-1}}T_{\gamma}^{v_{k-1}}T_{\alpha}^{u_k}T_{\beta}^{v_k}\] 
where $u_i,v_i\geq B_k$ for all $i$, then for $h_kf_{n,m}: S_{0,n+m+2}\to S_{0,n+m+2}$, we have
\begin{enumerate}
\item[(1)] $h_kf^3_{n,m}$ is pseudo-Anosov.
\item[(2)] $\vol(M_{h_kf^3_{n,m}})\geq3kV_8$. 
\item[(3)] there exists $N=N_k$, such that if $n=m>N$, then 
\[
\log\lambda (h_kf^3_{n,n})\leq 54\frac{\log(2n+2)}{2n+2}.
\]
\end{enumerate}

\end{thm}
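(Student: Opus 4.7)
The plan is to build a hyperbolic auxiliary manifold $M^*$ by drilling curves in the mapping torus of $f^3_{n,m}$, realize $M_{h_k f^3_{n,m}}$ as a Dehn filling of $M^*$, and handle (1) and (2) via Thurston's Dehn surgery theorem together with Agol's construction, while (3) is attacked by a Bestvina graph-map estimate.

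\emph{The drilled manifold.} Since $f_{n,m}$ is pseudo-Anosov, $M=M_{f^3_{n,m}}$ is hyperbolic. Choose $2k$ times $0<t_1<\cdots<t_{2k}<1$ and let $M^*$ be $M$ with the essential simple closed curves $\alpha\times\{t_{2i-1}\}$ for $1\le i\le k$, $\gamma\times\{t_{2i}\}$ for $1\le i\le k-1$, and $\beta\times\{t_{2k}\}$ drilled out. By the final remark of Section 2.3, $M_{h_k f^3_{n,m}}$ is the Dehn filling of $M^*$ at slopes $1/u_i,1/v_i$ on the corresponding tori. Theorem \ref{thurston} then produces a threshold $B_k$ beyond which this filling is hyperbolic, and since it fibers over $S^1$ with fiber of negative Euler characteristic, its monodromy is pseudo-Anosov, proving (1).

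\emph{Volume bound.} Inside $M^*$, each sub-region $\Sigma_0\times[t_{2i-1},t_{2i+1}]$ together with the three interior drills $\alpha,\gamma,\alpha$ is homeomorphic to Agol's manifold $A$ built from Proposition \ref{agol} by doubling, and an analogous piece incorporating the final $\beta$-drill contributes additional volume. Theorem \ref{adams} and its corollary allow us to simultaneously isotope all the relevant thrice-punctured-sphere cross-sections to be totally geodesic, so the Agol pieces embed isometrically with totally geodesic boundary and volume is additive across the resulting decomposition. A careful count gives $\vol(M^*)\ge 3kV_8+\delta$ for some $\delta>0$. Since $\vol(M_{h_k f^3_{n,m}})\to \vol(M^*)$ as $u_i,v_i\to\infty$ by Theorem \ref{thurston}, enlarging $B_k$ forces $\vol(M_{h_k f^3_{n,m}})\ge 3kV_8$, proving (2).

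\emph{Stretch factor.} For (3), following Bestvina, take a spine $G\subset S_{0,2n+2}$ and a graph map $g:G\to G$ homotopic to $h_k f^3_{n,n}$, designed so that $O(n)$ of its edges reflect the cyclic orbits of $p_n$ and $q_n$ while a bounded subgraph sitting inside $\Sigma_0$ carries the twists of $h_k$. Proposition \ref{pf} bounds the top eigenvalue $\lambda$ of the Perron-Frobenius block of the transition matrix by $\max_i N(V_i,\ell)^{1/\ell}$ for any $\ell$. Choosing $\ell$ of order $2n+2$ and counting length-$\ell$ paths in $g$ --- so that each iteration traverses the $p_n,q_n$-cycles essentially once and enters the twist subgraph only a bounded number of times --- yields the bound $54\log(2n+2)/(2n+2)$ once $n$ exceeds a threshold $N_k$. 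The constant $54$ collects the cubing of $f_{n,n}$ together with the asymptotic path-count produced by the Hironaka-Kin construction.

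\emph{Main obstacle.} The most delicate step is (3): although the number of twist factors in $h_k$ grows with $k$, the constant $54$ in the stretch-factor bound must be independent of $k$. The key leverage is that $h_k$ is supported inside the fixed small subsurface $\Sigma_0$, so that on each iteration of $g$ the twist subgraph is visited only boundedly many times and its contribution to the path counts is dwarfed by the $O(\log n)$ factor coming from $p_n,q_n$ once $n$ is large in terms of $k$. Making this precise, and then aligning the choice of $\ell$ and $N_k$ with the numerology needed to land the coefficient $54$, is the main technical work.
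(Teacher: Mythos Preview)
Your proposal follows the paper's architecture closely: drill a link from $M_{f^3_{n,m}}$, realize $M_{h_kf^3_{n,m}}$ as a Dehn filling, bound volume below via Agol's octahedral block, and control the dilatation by a graph map and path counting. Part (3) in particular is on target; the paper places $\Sigma_0$ around punctures near index $\lfloor n/2\rfloor$ so that the local bipartite subgraph $D_k$ introduced by $h_k$ sits far from the Hironaka--Kin subgraph $D$, and then counts paths of length $n/13$ exactly as you outline, with the key observation that no such path can meet both $D$ and $D_k$.

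There is one real gap in your treatment of (1)--(2). Both Theorem \ref{thurston} and Theorem \ref{adams} require the interior of $M^*$ to already be complete hyperbolic of finite volume, and you never establish this; you assert hyperbolicity only for $M_{f^3_{n,m}}$, not for the drilled manifold. The paper's route is different here and sidesteps the issue: it first drills only $\alpha\cup\beta$ from a single fiber and proves directly (its Lemma 1, an atoroidality check following Long) that this small drilling is hyperbolic. Adams' theorem is applied \emph{there}; then the two thrice-punctured spheres inside $\Sigma_0$ are cut open and a stacked Agol slab $A_k$ (built from $k$ copies of $A$, hyperbolic with totally geodesic boundary and volume $4kV_8$) is glued in along matching totally geodesic boundary. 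This simultaneously \emph{constructs} the hyperbolic metric on the large drilled manifold and delivers the volume bound $\ge 4kV_8$ in one stroke, so that $3kV_8$ survives the filling. Your route of locating Agol pieces inside an already-drilled $M^*$ could be made to work, but the hyperbolicity of $M^*$ must be supplied first, and the atoroidality check for the full $2k$-component link is more involved than for just $\alpha\cup\beta$.
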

Assuming this theorem, we prove the Main Theorem from the introduction.

\begin{mthm*}
For any fixed $g\geq 2$, and $L\geq 162g$, there exists a sequence $\{M_{f_i}\}_{i=1}^{\infty}$, with $f_i\in \Psi_{g,L}$, so that $\displaystyle {\lim_{n \to \infty} \vol(M_{f_i})\rightarrow \infty}$.
\end{mthm*}
\begin{proof}
For any $g\geq 2$, \cite{tsai} gives a construction of an appropriate cover $\pi: S_{g,s}\rightarrow S_{0,n+m+2}$ such that $s=(2g+1)(n+m+1)+1$ and 
\[f_{n,m}: S_{0,n+m+2}\rightarrow S_{0,n+m+2},\] 
lifts to $S_{g,s}$. Moreover, it is clear from her construction that each of $\alpha, \beta, \gamma$ lift, so $h_k$ lifts. 

Let $\widetilde{f_k}: S_{g,s}\rightarrow S_{g,s}$ be the lift of $h_k\circ f^3_{n,m}$. Then $\log(\lambda(\widetilde{f_k}))=\log(\lambda(h_kf^3_{n,m}))$. Also by Theorem \ref{main}, for $n=m>N_k$ and $n=m$ large enough,
\[
 \log(\lambda(\widetilde{f_k})) \leq 54\frac{\log(n+m+2)}{n+m+2}<54\frac{\log(s)}{\frac{s-1}{2g+1}+1}<162g\frac{\log s}{s}.
\]
Furthermore, $\vol(M_{\widetilde{f_k}})=deg(\pi)\vol(M) \geq 3kV_8deg(\pi)$. Therefore, $\{M_{\widetilde{f_k}}\}^{\infty}_{k=1}$ is contained in the set for the theorem and $\vol(M_{\widetilde{f_k}})\rightarrow \infty$. 

\end{proof}
\begin{col*}
For any $g\geq 2$, there exists $L$ such that there is no finite set $\Omega$ of 3-manifolds so that all $M_f$, $f\in \Psi_{g,L}$, are obtained by Dehn filling on some $M \in \Omega$.
\end{col*}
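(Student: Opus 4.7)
The plan is to derive the Corollary directly from the Main Theorem by combining it with the Gromov norm estimates (Theorems \ref{gromov} and \ref{thurston1}). Fix $g \geq 2$ and take $L = 162g$, the constant provided by the Main Theorem. I will argue by contradiction: assume a finite set $\Omega = \{M_1, \dots, M_k\}$ of 3-manifolds exists such that every $M_f$ with $f \in \Psi_{g,L}$ is obtained by Dehn filling on some $M_j \in \Omega$.

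First, I would observe that each $M_j \in \Omega$ has finite Gromov norm $\|[M_j]\|$, so the quantity
\[
C = v_3 \cdot \max_{1 \leq j \leq k} \|[M_j]\|
\]
is a finite constant depending only on $\Omega$. By Theorem \ref{thurston1}, any Dehn filling $M_\beta$ of $M_j$ satisfies $\|[M_\beta]\| \leq \|[M_j]\|$. In particular, for every $f \in \Psi_{g,L}$, writing $M_f$ as a Dehn filling on some $M_j \in \Omega$ yields $\|[M_f]\| \leq \|[M_j]\|$. Since $f$ is pseudo-Anosov, $M_f$ is a finite-volume hyperbolic 3-manifold, so Theorem \ref{gromov} gives $\vol(M_f) = v_3 \|[M_f]\| \leq C$. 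Hence the volumes of mapping tori of elements of $\Psi_{g,L}$ would be uniformly bounded by $C$.

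This contradicts the Main Theorem, which produces a sequence $\{M_{f_i}\}_{i=1}^\infty$ with $f_i \in \Psi_{g,L}$ and $\vol(M_{f_i}) \to \infty$. Therefore no such finite set $\Omega$ can exist, completing the proof.

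There is no real obstacle here; the entire content lies in the Main Theorem. The only mild subtlety is ensuring the inequality $\vol(M_f) \leq v_3 \|[M_j]\|$ is valid for every $M_j$ in $\Omega$ regardless of whether $M_j$ itself is hyperbolic — but this is guaranteed because the Gromov norm is defined and finite for any compact 3-manifold, and Theorem \ref{thurston1} does not require $M_j$ to be hyperbolic, while Theorem \ref{gromov} is invoked only on the hyperbolic manifold $M_f$.
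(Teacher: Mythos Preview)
Your proof is correct and follows exactly the same approach as the paper: choose $L \geq 162g$, apply Theorem~\ref{thurston1} to bound the Gromov norm of each $M_f$ by $\max_{M\in\Omega}\|[M]\|$, convert to a volume bound via Theorem~\ref{gromov}, and contradict the Main Theorem. Your write-up is in fact slightly more careful than the paper's, since you explicitly note that Theorem~\ref{gromov} is applied only to the hyperbolic manifold $M_f$.
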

\begin{proof}
Let $L\geq 162g$. If the finite set $\Omega$ exist, then by Theorems \ref{gromov} and \ref{thurston1}, 
\[
\displaystyle{\vol(M_f)\leq v_3\max_{M\in \Omega}\{||[M]||\}}<\infty,
\]
which contradicts the Main Theorem.
\end{proof}

\section{Proof of Theorem \ref{main}}
Now fix some $n,m>6$, let $f=f^3_{n,m}$. Let $M_f$ be the mapping torus.

The proof of the following lemma is almost identical to the proof of \cite[Theorem B]{long}.
\begin{lem}
$M_f\backslash ((\alpha \cup \beta) \times \{1/2\})$ is hyperbolic.
\end{lem}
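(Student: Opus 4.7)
The plan is to verify the hypotheses of Thurston's hyperbolization theorem for compact, orientable Haken 3-manifolds with toroidal boundary: namely, that $N := M_f \setminus ((\alpha \cup \beta) \times \{1/2\})$ is irreducible, has incompressible boundary, is atoroidal and anannular, and is not Seifert fibered. The boundary $\partial N$ is a disjoint union of tori, consisting of the cusp tori inherited from $M_f$ (one per puncture of $S_{0,n+m+2}$) together with two new tori from regular neighborhoods of $\alpha$ and $\beta$. The key input is that $M_f$ is itself hyperbolic, since $f = f_{n,m}^3$ is a power of the pseudo-Anosov $f_{n,m}$, and so is already irreducible, atoroidal, anannular, and has incompressible cusp tori.

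Irreducibility and boundary-incompressibility of $N$ follow quickly from the essentiality of $\alpha$ and $\beta$. Any embedded 2-sphere $\Sigma \subset N$ bounds a ball $B \subset M_f$; since $\alpha$ and $\beta$ are essential in the incompressible fiber $S_{0,n+m+2}\times\{1/2\}$, and hence essential in $M_f$, neither can lie inside $B$, so $B \subset N$, proving irreducibility. A compressing disk for one of the new boundary tori would realize $\alpha$ or $\beta$ as null-homotopic in $M_f$, which is impossible by the same essentiality; the original cusp tori are incompressible in $M_f$ and so remain incompressible in $N$.

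The central step is ruling out essential tori and annuli in $N$. Given an essential torus $T \subset N$, $T$ must become inessential after re-filling the drilled neighborhoods of $\alpha$ and $\beta$, because $M_f$ is atoroidal. Thus there is either a compressing disk or a $T^2 \times I$-parallelism in $M_f$ witnessing this, and any such witness must meet $\alpha \cup \beta$. Choosing one that minimizes $|\cdot \cap (\alpha \cup \beta)|$ and running the standard innermost-disk/outermost-arc surgery argument, the outcome is either a simpler compressing disk that already lives in $N$, contradicting essentiality of $T$, or an identification of $T$ with the boundary of a regular neighborhood of a sub-collection of $\{\alpha, \beta\}$, so $T$ is peripheral in $N$, again contradicting essentiality. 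The essential-annulus case is analogous. Finally, non-Seifertness is established by the usual analysis: an incompressible surface in a Seifert fibered space is isotopic to a horizontal or vertical surface, and matching this against the fibration of $M_f$ over $S^1$ with pseudo-Anosov monodromy $f$ (via a Dehn filling on $N$) gives a contradiction.

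The main obstacle is the atoroidal/anannular step, which is the content of \cite[Theorem B]{long}; the delicate topological bookkeeping in the iterated surgery argument is where one genuinely uses essentiality of $\alpha$ and $\beta$ in the fiber and the pseudo-Anosov dynamics of $f$. The remaining hypotheses of Thurston's theorem are comparatively routine given hyperbolicity of $M_f$.
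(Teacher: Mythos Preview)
Your approach differs substantially from the paper's, and as written it has a genuine gap in the atoroidal step.

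The paper does not argue by pushing a torus back into $M_f$ and surgering a compressing disk against $\alpha\cup\beta$. Instead it intersects an incompressible torus $T_0$ with the (punctured) fiber $\Sigma'=\Sigma\setminus(\alpha\cup\beta)$, arranges that the pieces $T_0\setminus\Sigma'$ are annuli in the cut-open product $S\times I$, and then uses two concrete facts to rule out all closing-up configurations: (i) $\alpha$ and $\beta$ bound different numbers of punctures, hence are non-isotopic, so no annulus piece can connect an $\alpha$-parallel curve to a $\beta$-parallel curve; and (ii) $f$ is pseudo-Anosov, so $f^{k_1}(\alpha)$ is never isotopic to $f^{k_2}(\alpha)$ for $k_1\neq k_2$, which blocks annuli that wrap around the $S^1$-direction from closing up.

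Your surgery argument does not reach the dichotomy you claim. First, ``innermost-disk/outermost-arc'' is not literally applicable: a compressing disk $D$ for $T$ in $M_f$ meets the one-dimensional link $\alpha\cup\beta$ in finitely many \emph{points}, not in circles or arcs. If you instead intersect $D$ with the boundary tori $\partial N(\alpha\cup\beta)$, the innermost circle can be a meridian, and that intersection cannot be removed by an isotopy in $N$. More seriously, once you know $T$ bounds a solid torus $V$ in $M_f$ (which is what compressibility plus irreducibility gives), the possibility that $\alpha\subset V$ is a $(p,q)$-cable of the core with $|q|\ge 2$ is not excluded: then $V\setminus N(\alpha)$ is a cable space, $T$ is incompressible in it, and $T$ is \emph{not} parallel to $\partial N(\alpha)$. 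Nothing in your argument rules this out; doing so is exactly where the fiber structure and the pseudo-Anosov property enter, and that is what the paper's annulus analysis supplies. Your acknowledgment that ``delicate bookkeeping'' uses these properties is correct, but the bookkeeping you sketch is not the one that works.
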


\begin{proof}
Let 
\[
\Sigma=S_{0,n+m+2}\times \{1/2\}, \Sigma\textprime=\Sigma\backslash ((\alpha \cup \beta) \times \{1/2\})\subset M_f.
\]
Let $T_0\subset M_f$ be an embedded incompressible torus. By applying some isotopy, we can make every component of $T_0\backslash \Sigma\textprime$ be an annulus. Any annulus component should either miss no fiber or have boundary components parallel to $\alpha$ or $\beta$, and on opposite sides of some small neighborhood of $\alpha$ or $\beta$. Since $\alpha$ and $\beta$ bound different number of punctures, a component parallel to $\alpha$ can never connect to a component parallel to $\beta$. Also, $f^{k_1}(\alpha)$ will never close up with $f^{k_2}(\alpha)$ if $k_1\neq k_2$ since $f$ is pseudo-Anosov. By Thurston's hyperbolization theorem (see \cite{thurston1,morgan,otal}), $M_f\backslash ((\alpha \cup \beta) \times \{1/2\})$ is hyperbolic.
\end{proof}

For any $k$, let $L_k \subset M_f $ be 
\[
L_k=\alpha \times \left\{\frac{2}{4k}, \frac{4}{4k}, \dots, \frac{2k+2}{4k}\right\} \cup \gamma \times \left\{\frac{3}{4k}, \frac{5}{4k}, \dots, \frac{2k+1}{4k}\right\} \cup \beta \times \left\{\frac{1}{4k}\right\}. 
\]
Let $N(L_k)$ denote an tubular neighborhood of $L_k$ and $M_k=M_f\backslash N(L_k)$. We can order the boundary components of $M_k$ as \[\partial M_k=\partial_1M_k\sqcup \dots \sqcup \partial_{2k+2}M_k
,\] where 
\[
\begin{cases} 
   \partial_{2i}M_k=\alpha \times \{ \frac{2i}{4k}\} & \text{for any } 1\leq i \leq k+1 \\
   \partial_{2i+1}M_k=\gamma \times \{ \frac{2i+1}{4k}\}       & \text{for any } 1\leq i \leq k-1\\
   \partial_1M_k=\beta \times \{ \frac{1}{4k}\}. & 
  \end{cases}
\]

\begin{lem}
The interior of $M_f\backslash N(L_k)$ is hyperbolic and 
\[\vol(int (M_f\backslash N(L_k)))\geq 4kV_8.\]
\end{lem}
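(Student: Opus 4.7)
The plan is to prove the two assertions of the lemma separately.

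\textbf{Hyperbolicity.} I would apply Thurston's hyperbolization theorem, mimicking the previous lemma. It suffices to verify that $M_f \setminus N(L_k)$ is irreducible, atoroidal, and anannular. Given any essential torus or annulus $T$, isotope it so that each component of its intersection with a generic fiber $\Sigma \setminus L_k$ is an annulus, whose boundary components are either parallel to one of the drilled curves or miss all of them. The drilled curves come in three families: $\alpha \times \{(2i)/(4k)\}$, $\gamma \times \{(2i+1)/(4k)\}$, and $\beta \times \{1/(4k)\}$. Because $f$ is pseudo-Anosov, $f^a(\alpha)$ and $f^b(\alpha)$ are pairwise non-isotopic for $a\neq b$, and similarly for $\gamma$; moreover $\alpha$ and $\gamma$ each bound two of the punctures $V_i, V_{i+1}, V_{i+2}$, while $\beta$ bounds all three, so these three families are pairwise non-isotopic as well. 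Thus no annular piece can close up into an essential torus after gluing through the mapping-torus structure, and the argument of the previous lemma extends.

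\textbf{Volume lower bound.} The plan is to exhibit a submanifold $U \subset M_f \setminus N(L_k)$ that is topologically a $2k$-fold stack of Agol's $A_0$'s glued along thrice-punctured sphere boundaries, so that $\vol(U) = 2k \cdot \vol(A_0) = 4kV_8$ by Proposition \ref{agol} iterated. Consider the tall slab
\[ U_0 = \Sigma_0 \times \bigl[2/(4k),\; (2k+2)/(4k)\bigr] \]
drilled along the $L_k$-curves inside; the drillings proceed in the alternating pattern $\alpha, \gamma, \alpha, \gamma, \ldots, \gamma, \alpha$, partitioning $U_0$ into $2k$ layers. To obtain the required cusp structure, the annular side boundary $\beta \times [2/(4k), (2k+2)/(4k)]$ is absorbed into the $\beta$-cusp at time $1/(4k)$: this annulus is isotopic in $M_f \setminus N(L_k)$ (via the region $\{\text{nbhd of } \beta\} \times [1/(4k), 2/(4k)]$, which avoids all drilled curves) to a region on the $\beta$-cusp torus. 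After absorption, the submanifold $U$ has $2k+1$ cusps in the alternating sequence plus $3$ vertical cusps from $V_i, V_{i+1}, V_{i+2}$, and its boundary consists of four properly embedded thrice-punctured spheres corresponding to the two pieces of $\Sigma_0 \setminus \alpha$ at the top and bottom (each outside-$\alpha$ 1-punctured annulus is turned into a 3-punctured sphere by the $\beta$-absorption). By the Corollary to Adams's theorem, these four boundary thrice-punctured spheres can be simultaneously straightened to be totally geodesic. Mostow-Prasad rigidity for finite-volume hyperbolic 3-manifolds with totally geodesic boundary then forces $U$ to be isometric to the $2k$-fold stack of $A_0$'s, so $\vol(U) = 4kV_8$, giving $\vol(\mathrm{int}(M_f \setminus N(L_k))) \geq 4kV_8$.

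\textbf{Main obstacle.} The hard part is the absorption step: the two outside-$\alpha$ boundary pieces (at the top and the bottom of the slab) both need to be extended through $\beta$-annuli to the single $\beta$-cusp, and care is required to arrange the extensions so that the resulting four boundary thrice-punctured spheres are pairwise disjoint and pairwise non-isotopic (so that Adams's Corollary is applicable). A natural remedy is to route the two extensions through disjoint horoball portions of the $\beta$-cusp, or equivalently to perform the absorption on two disjoint translates of the $\beta$-annulus meeting distinct boundary slopes of the $\beta$-cusp torus. One must also verify the natural generalization of Proposition \ref{agol} that a stack of $n$ copies of $A_0$ glued along totally geodesic thrice-punctured spheres has volume $2n V_8$; this follows by induction on $n$ since gluing along totally geodesic surfaces is volume-additive.
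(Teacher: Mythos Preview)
Your approach is broadly workable but takes a more circuitous route than the paper, and your hyperbolicity argument has a gap.

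\textbf{Comparison.} The paper does not prove hyperbolicity and the volume bound separately. Instead it builds $M_f\setminus N(L_k)$ directly as a hyperbolic manifold: start from $M_f\setminus((\alpha\cup\beta)\times\{1/2\})$ (hyperbolic by Lemma~1), cut along the two thrice-punctured spheres that make up $\Sigma_0\setminus\alpha$ at height $1/2$ (totally geodesic by Adams), and glue the block $A_k$ isometrically into the slit. One then checks that the result is homeomorphic to $M_f\setminus N(L_k)$. This single construction yields both a complete hyperbolic metric and an isometrically embedded copy of $A_k$, so $\vol\ge\vol(A_k)=4kV_8$. No atoroidality case analysis, no absorption trick, and no appeal to Mostow rigidity are needed. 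Your approach is essentially the inverse operation (locate $A_k$ inside $M_f\setminus N(L_k)$), which forces you to handle the awkward fact that $\beta$ is drilled at only one height; the paper sidesteps this because in $M_f\setminus((\alpha\cup\beta)\times\{1/2\})$ the $\alpha$- and $\beta$-cusps sit at the \emph{same} height, so both components of $\Sigma_0\setminus\alpha$ are already honest thrice-punctured spheres there.

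\textbf{The gap.} In your atoroidality sketch you distinguish the three families $\alpha,\gamma,\beta$ and invoke the pseudo-Anosov property of $f$, but you never rule out a torus built from annuli running between two drilled copies of $\alpha$ at \emph{different} heights (say $\alpha\times\{2i/(4k)\}$ and $\alpha\times\{2j/(4k)\}$). These are the \emph{same} curve in the fiber, so your non-isotopy observations do not apply. What actually obstructs such an annulus is that every pair of consecutive $\alpha$-heights is separated by a $\gamma$-height, and $i(\alpha,\gamma)>0$ in $\Sigma_0$, so $\alpha$ cannot be isotoped off $\gamma$ at that level. You would need to insert this argument (and the symmetric one with $\alpha$ and $\gamma$ swapped) to complete the proof. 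The paper's cut-and-glue construction avoids this entirely.
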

\begin{proof}
Glue $k$ copies of $A$, top to bottom, to get
 \[A_k\cong (S_{0,4}\times [0,1])\backslash \left(\alpha \times \left\{\frac{0}{2k}, \frac{2}{2k}, \dots, \frac{2k}{2k}\right\} \cup \gamma \times \left\{\frac{1}{2k}, \frac{3}{2k}, \dots, \frac{2k-1}{2k}\right\}\right),\]
with the $i$-th copy identifying with
 \[
\left(S_{0,4}\times \left[\frac{2i-2}{2k},\frac{2i}{2k}\right]\right)\backslash\left(\alpha \times \left\{\frac{2i-2}{2k}, \frac{2i}{2k}\right\}\cup \gamma \times\left\{\frac{2i-1}{2k}\right\}\right).
\]
By Theorem \ref{adams}, $A_k$ has four totally geodesic thrice-punctured sphere boundary components, and $\vol(A_k)=4kV_8$.

Cut $M_f\backslash ((\alpha \cup \beta) \times \{1/2\})$ along the two thrice-punctured spheres, i.e. the two regions shown in Figure 4. The two thrice-punctured spheres can be assumed to be totally geodesic by Corollary 2. So the cut-open manifold has four totally geodesic thrice-punctured sphere boundary components. Now glue the top boundary of $A_k$ to the top of the cut by an isometry, with the marked curves and colored faces glued correspondingly. Then apply the same to the bottom boundary. After applying an isotopy to adjust the height, we see that the result is homeomorphic to $M_f\backslash N(L_k)$. Moreover, $A_k$ is isometrically embedded in $M_f\backslash N(L_k)$. Since $\vol(A_k)\geq 4kV_8$, we have $\vol(M_f\backslash N(L_k))\geq 4kV_8$.

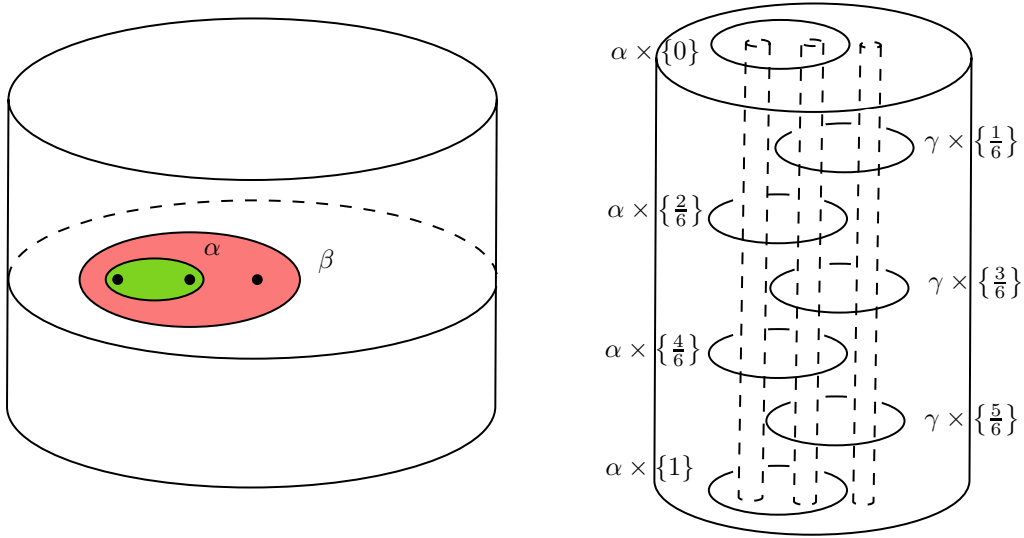
\begin{figure}[!ht]\centering

\tikzset{every picture/.style={line width=0.75pt}} 

\begin{tikzpicture}[x=0.75pt,y=0.75pt,yscale=-1,xscale=1]

\draw  [fill={rgb, 255:red, 249; green, 46; blue, 46 }  ,fill opacity=0.64 ] (80.45,147.49) .. controls (80.45,134.3) and (105.33,123.61) .. (136.02,123.61) .. controls (166.71,123.61) and (191.59,134.3) .. (191.59,147.49) .. controls (191.59,160.68) and (166.71,171.37) .. (136.02,171.37) .. controls (105.33,171.37) and (80.45,160.68) .. (80.45,147.49) -- cycle ;
\draw  [fill={rgb, 255:red, 126; green, 211; blue, 33 }  ,fill opacity=1 ] (93.63,147.39) .. controls (93.63,141.54) and (104.67,136.8) .. (118.29,136.8) .. controls (131.91,136.8) and (142.96,141.54) .. (142.96,147.39) .. controls (142.96,153.25) and (131.91,157.99) .. (118.29,157.99) .. controls (104.67,157.99) and (93.63,153.25) .. (93.63,147.39) -- cycle ;
\draw   (290.85,56.44) -- (290.12,211.64) .. controls (290.01,234.16) and (234.8,252.42) .. (166.79,252.42) .. controls (98.79,252.42) and (43.75,234.16) .. (43.85,211.64) -- (44.59,56.44) .. controls (44.69,33.92) and (99.91,15.67) .. (167.91,15.67) .. controls (235.91,15.67) and (290.96,33.92) .. (290.85,56.44) .. controls (290.74,78.96) and (235.53,97.21) .. (167.53,97.21) .. controls (99.52,97.21) and (44.48,78.96) .. (44.59,56.44) ;
\draw  [draw opacity=0] (290.68,150.84) .. controls (285.43,171.31) and (232.44,187.4) .. (167.85,187.4) .. controls (99.77,187.4) and (44.59,169.53) .. (44.59,147.49) .. controls (44.59,147.23) and (44.59,146.97) .. (44.61,146.72) -- (167.85,147.49) -- cycle ; \draw   (290.68,150.84) .. controls (285.43,171.31) and (232.44,187.4) .. (167.85,187.4) .. controls (99.77,187.4) and (44.59,169.53) .. (44.59,147.49) .. controls (44.59,147.23) and (44.59,146.97) .. (44.61,146.72) ;
\draw  [draw opacity=0][dash pattern={on 4.5pt off 4.5pt}] (44.61,146.72) .. controls (45.74,124.99) and (100.49,107.49) .. (167.85,107.49) .. controls (233.77,107.49) and (287.6,124.24) .. (290.95,145.32) -- (167.85,147.4) -- cycle ; \draw  [dash pattern={on 4.5pt off 4.5pt}] (44.61,146.72) .. controls (45.74,124.99) and (100.49,107.49) .. (167.85,107.49) .. controls (233.77,107.49) and (287.6,124.24) .. (290.95,145.32) ;
\draw  [fill={rgb, 255:red, 0; green, 0; blue, 0 }  ,fill opacity=1 ] (97.45,147.43) .. controls (97.45,146.2) and (98.45,145.2) .. (99.69,145.2) .. controls (100.92,145.2) and (101.92,146.2) .. (101.92,147.43) .. controls (101.92,148.66) and (100.92,149.66) .. (99.69,149.66) .. controls (98.45,149.66) and (97.45,148.66) .. (97.45,147.43) -- cycle ;
\draw  [fill={rgb, 255:red, 0; green, 0; blue, 0 }  ,fill opacity=1 ] (133.79,147.49) .. controls (133.79,146.26) and (134.79,145.26) .. (136.02,145.26) .. controls (137.25,145.26) and (138.25,146.26) .. (138.25,147.49) .. controls (138.25,148.72) and (137.25,149.72) .. (136.02,149.72) .. controls (134.79,149.72) and (133.79,148.72) .. (133.79,147.49) -- cycle ;
\draw  [fill={rgb, 255:red, 0; green, 0; blue, 0 }  ,fill opacity=1 ] (167.85,147.49) .. controls (167.85,146.26) and (168.84,145.26) .. (170.08,145.26) .. controls (171.31,145.26) and (172.31,146.26) .. (172.31,147.49) .. controls (172.31,148.72) and (171.31,149.72) .. (170.08,149.72) .. controls (168.84,149.72) and (167.85,148.72) .. (167.85,147.49) -- cycle ;
\draw   (397.82,253.79) .. controls (397.82,246.97) and (413.42,241.45) .. (432.66,241.45) .. controls (451.9,241.45) and (467.5,246.97) .. (467.5,253.79) .. controls (467.5,260.61) and (451.9,266.14) .. (432.66,266.14) .. controls (413.42,266.14) and (397.82,260.61) .. (397.82,253.79) -- cycle ;
\draw  [color={rgb, 255:red, 255; green, 255; blue, 255 }  ,draw opacity=1 ][fill={rgb, 255:red, 255; green, 255; blue, 255 }  ,fill opacity=1 ] (410.33,242.52) .. controls (410.33,237.86) and (414.11,234.08) .. (418.77,234.08) .. controls (423.43,234.08) and (427.21,237.86) .. (427.21,242.52) .. controls (427.21,247.18) and (423.43,250.96) .. (418.77,250.96) .. controls (414.11,250.96) and (410.33,247.18) .. (410.33,242.52) -- cycle ;

\draw  [color={rgb, 255:red, 255; green, 255; blue, 255 }  ,draw opacity=1 ][fill={rgb, 255:red, 255; green, 255; blue, 255 }  ,fill opacity=1 ] (438.91,242.52) .. controls (438.91,237.86) and (442.69,234.08) .. (447.36,234.08) .. controls (452.02,234.08) and (455.8,237.86) .. (455.8,242.52) .. controls (455.8,247.18) and (452.02,250.96) .. (447.36,250.96) .. controls (442.69,250.96) and (438.91,247.18) .. (438.91,242.52) -- cycle ;

\draw   (426.82,218.79) .. controls (426.82,211.97) and (442.42,206.45) .. (461.66,206.45) .. controls (480.9,206.45) and (496.5,211.97) .. (496.5,218.79) .. controls (496.5,225.61) and (480.9,231.14) .. (461.66,231.14) .. controls (442.42,231.14) and (426.82,225.61) .. (426.82,218.79) -- cycle ;
\draw  [color={rgb, 255:red, 255; green, 255; blue, 255 }  ,draw opacity=1 ][fill={rgb, 255:red, 255; green, 255; blue, 255 }  ,fill opacity=1 ] (439.33,207.52) .. controls (439.33,202.86) and (443.11,199.08) .. (447.77,199.08) .. controls (452.43,199.08) and (456.21,202.86) .. (456.21,207.52) .. controls (456.21,212.18) and (452.43,215.96) .. (447.77,215.96) .. controls (443.11,215.96) and (439.33,212.18) .. (439.33,207.52) -- cycle ;

\draw  [color={rgb, 255:red, 255; green, 255; blue, 255 }  ,draw opacity=1 ][fill={rgb, 255:red, 255; green, 255; blue, 255 }  ,fill opacity=1 ] (467.91,207.52) .. controls (467.91,202.86) and (471.69,199.08) .. (476.36,199.08) .. controls (481.02,199.08) and (484.8,202.86) .. (484.8,207.52) .. controls (484.8,212.18) and (481.02,215.96) .. (476.36,215.96) .. controls (471.69,215.96) and (467.91,212.18) .. (467.91,207.52) -- cycle ;

\draw   (397.82,184.79) .. controls (397.82,177.97) and (413.42,172.45) .. (432.66,172.45) .. controls (451.9,172.45) and (467.5,177.97) .. (467.5,184.79) .. controls (467.5,191.61) and (451.9,197.14) .. (432.66,197.14) .. controls (413.42,197.14) and (397.82,191.61) .. (397.82,184.79) -- cycle ;
\draw  [color={rgb, 255:red, 255; green, 255; blue, 255 }  ,draw opacity=1 ][fill={rgb, 255:red, 255; green, 255; blue, 255 }  ,fill opacity=1 ] (410.33,173.52) .. controls (410.33,168.86) and (414.11,165.08) .. (418.77,165.08) .. controls (423.43,165.08) and (427.21,168.86) .. (427.21,173.52) .. controls (427.21,178.18) and (423.43,181.96) .. (418.77,181.96) .. controls (414.11,181.96) and (410.33,178.18) .. (410.33,173.52) -- cycle ;

\draw  [color={rgb, 255:red, 255; green, 255; blue, 255 }  ,draw opacity=1 ][fill={rgb, 255:red, 255; green, 255; blue, 255 }  ,fill opacity=1 ] (438.91,173.52) .. controls (438.91,168.86) and (442.69,165.08) .. (447.36,165.08) .. controls (452.02,165.08) and (455.8,168.86) .. (455.8,173.52) .. controls (455.8,178.18) and (452.02,181.96) .. (447.36,181.96) .. controls (442.69,181.96) and (438.91,178.18) .. (438.91,173.52) -- cycle ;

\draw   (428.82,151.79) .. controls (428.82,144.97) and (444.42,139.45) .. (463.66,139.45) .. controls (482.9,139.45) and (498.5,144.97) .. (498.5,151.79) .. controls (498.5,158.61) and (482.9,164.14) .. (463.66,164.14) .. controls (444.42,164.14) and (428.82,158.61) .. (428.82,151.79) -- cycle ;
\draw  [color={rgb, 255:red, 255; green, 255; blue, 255 }  ,draw opacity=1 ][fill={rgb, 255:red, 255; green, 255; blue, 255 }  ,fill opacity=1 ] (441.33,140.52) .. controls (441.33,135.86) and (445.11,132.08) .. (449.77,132.08) .. controls (454.43,132.08) and (458.21,135.86) .. (458.21,140.52) .. controls (458.21,145.18) and (454.43,148.96) .. (449.77,148.96) .. controls (445.11,148.96) and (441.33,145.18) .. (441.33,140.52) -- cycle ;

\draw  [color={rgb, 255:red, 255; green, 255; blue, 255 }  ,draw opacity=1 ][fill={rgb, 255:red, 255; green, 255; blue, 255 }  ,fill opacity=1 ] (469.91,140.52) .. controls (469.91,135.86) and (473.69,132.08) .. (478.36,132.08) .. controls (483.02,132.08) and (486.8,135.86) .. (486.8,140.52) .. controls (486.8,145.18) and (483.02,148.96) .. (478.36,148.96) .. controls (473.69,148.96) and (469.91,145.18) .. (469.91,140.52) -- cycle ;

\draw   (530.27,35.49) -- (529.26,248.88) .. controls (529.19,264) and (493.56,276.25) .. (449.67,276.25) .. controls (405.78,276.25) and (370.27,264) .. (370.34,248.88) -- (371.34,35.49) .. controls (371.41,20.37) and (407.05,8.12) .. (450.94,8.12) .. controls (494.82,8.12) and (530.34,20.37) .. (530.27,35.49) .. controls (530.2,50.61) and (494.56,62.86) .. (450.68,62.86) .. controls (406.79,62.86) and (371.27,50.61) .. (371.34,35.49) ;
\draw   (399.06,28.76) .. controls (399.06,21.95) and (414.66,16.42) .. (433.9,16.42) .. controls (453.15,16.42) and (468.74,21.95) .. (468.74,28.76) .. controls (468.74,35.58) and (453.15,41.11) .. (433.9,41.11) .. controls (414.66,41.11) and (399.06,35.58) .. (399.06,28.76) -- cycle ;
\draw   (397.82,116.79) .. controls (397.82,109.97) and (413.42,104.45) .. (432.66,104.45) .. controls (451.9,104.45) and (467.5,109.97) .. (467.5,116.79) .. controls (467.5,123.61) and (451.9,129.14) .. (432.66,129.14) .. controls (413.42,129.14) and (397.82,123.61) .. (397.82,116.79) -- cycle ;
\draw  [color={rgb, 255:red, 255; green, 255; blue, 255 }  ,draw opacity=1 ][fill={rgb, 255:red, 255; green, 255; blue, 255 }  ,fill opacity=1 ] (410.33,105.52) .. controls (410.33,100.86) and (414.11,97.08) .. (418.77,97.08) .. controls (423.43,97.08) and (427.21,100.86) .. (427.21,105.52) .. controls (427.21,110.18) and (423.43,113.96) .. (418.77,113.96) .. controls (414.11,113.96) and (410.33,110.18) .. (410.33,105.52) -- cycle ;

\draw  [color={rgb, 255:red, 255; green, 255; blue, 255 }  ,draw opacity=1 ][fill={rgb, 255:red, 255; green, 255; blue, 255 }  ,fill opacity=1 ] (438.91,105.52) .. controls (438.91,100.86) and (442.69,97.08) .. (447.36,97.08) .. controls (452.02,97.08) and (455.8,100.86) .. (455.8,105.52) .. controls (455.8,110.18) and (452.02,113.96) .. (447.36,113.96) .. controls (442.69,113.96) and (438.91,110.18) .. (438.91,105.52) -- cycle ;

\draw   (431.38,80.97) .. controls (431.38,74.15) and (446.98,68.62) .. (466.22,68.62) .. controls (485.46,68.62) and (501.06,74.15) .. (501.06,80.97) .. controls (501.06,87.78) and (485.46,93.31) .. (466.22,93.31) .. controls (446.98,93.31) and (431.38,87.78) .. (431.38,80.97) -- cycle ;
\draw  [color={rgb, 255:red, 255; green, 255; blue, 255 }  ,draw opacity=1 ][fill={rgb, 255:red, 255; green, 255; blue, 255 }  ,fill opacity=1 ] (440.16,72.8) .. controls (440.16,68.14) and (443.94,64.36) .. (448.6,64.36) .. controls (453.26,64.36) and (457.04,68.14) .. (457.04,72.8) .. controls (457.04,77.46) and (453.26,81.24) .. (448.6,81.24) .. controls (443.94,81.24) and (440.16,77.46) .. (440.16,72.8) -- cycle ;
\draw  [color={rgb, 255:red, 255; green, 255; blue, 255 }  ,draw opacity=1 ][fill={rgb, 255:red, 255; green, 255; blue, 255 }  ,fill opacity=1 ] (469.99,70.32) .. controls (469.99,65.65) and (473.77,61.88) .. (478.43,61.88) .. controls (483.09,61.88) and (486.87,65.65) .. (486.87,70.32) .. controls (486.87,74.98) and (483.09,78.76) .. (478.43,78.76) .. controls (473.77,78.76) and (469.99,74.98) .. (469.99,70.32) -- cycle ;
\draw  [dash pattern={on 4.5pt off 4.5pt}] (428.54,28.19) -- (424.86,257.45) .. controls (424.85,258.44) and (422.14,259.21) .. (418.82,259.15) .. controls (415.5,259.1) and (412.82,258.25) .. (412.83,257.25) -- (416.51,28) .. controls (416.53,27) and (419.24,26.24) .. (422.56,26.29) .. controls (425.88,26.34) and (428.56,27.19) .. (428.54,28.19) .. controls (428.53,29.19) and (425.82,29.95) .. (422.5,29.9) .. controls (419.18,29.85) and (416.5,28.99) .. (416.51,28) ;
\draw  [dash pattern={on 4.5pt off 4.5pt}] (484.56,29.14) -- (480.86,259.71) .. controls (480.84,260.56) and (478.54,261.21) .. (475.71,261.17) .. controls (472.88,261.12) and (470.6,260.4) .. (470.61,259.55) -- (474.31,28.97) .. controls (474.33,28.12) and (476.63,27.47) .. (479.46,27.52) .. controls (482.29,27.56) and (484.57,28.29) .. (484.56,29.14) .. controls (484.55,29.99) and (482.24,30.64) .. (479.41,30.59) .. controls (476.58,30.55) and (474.3,29.82) .. (474.31,28.97) ;
\draw  [dash pattern={on 4.5pt off 4.5pt}] (455.58,28.04) -- (451.86,259.59) .. controls (451.85,260.5) and (449.35,261.21) .. (446.28,261.16) .. controls (443.22,261.11) and (440.75,260.33) .. (440.76,259.41) -- (444.48,27.86) .. controls (444.5,26.94) and (446.99,26.23) .. (450.06,26.28) .. controls (453.12,26.33) and (455.59,27.12) .. (455.58,28.04) .. controls (455.56,28.96) and (453.07,29.66) .. (450,29.61) .. controls (446.94,29.56) and (444.47,28.78) .. (444.48,27.86) ;

\draw (147.33,132.36) node   {$\alpha $};
\draw (204.96,138.38) node   {$\beta $};
\draw (370.85,32.71) node   {$\alpha \times \{0\}$};
\draw (371.19,112.48) node   {$\alpha \times \left\{\frac{2}{6}\right\}$};
\draw (369.87,183.08) node   {$\alpha \times \left\{\frac{4}{6}\right\}$};
\draw (530.66,77.95) node   {$\gamma \times \left\{\frac{1}{6}\right\}$};
\draw (532.66,148.95) node   {$\gamma \times \left\{\frac{3}{6}\right\}$};
\draw (530.66,217.95) node   {$\gamma \times \left\{\frac{5}{6}\right\}$};
\draw (368.06,242.59) node   {$\alpha \times \{1\}$};

\end{tikzpicture}

\caption{Cut and glue $A_k$ to $M_f\backslash ((\alpha \cup \beta) \times \{1/2\})$ when $k=3$}  
\end{figure}

\end{proof}

\begin{prop}
Given $k$, there exists $B_k$, such that if $u_i,v_i>B_k$, then $h_kf$ is pseudo-Anosov and $\vol(M_{h_kf})\geq 3kV_8$.
\end{prop}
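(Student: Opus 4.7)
The plan is to realize $M_{h_kf}$ as a Dehn filling of the hyperbolic 3-manifold $M_k = M_f\setminus N(L_k)$ from the previous lemma, and then invoke Thurston's hyperbolic Dehn surgery theorem.

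By the construction recorded at the end of Section 2.3 (cf.\ \cite{stallings}), on each torus boundary component $\partial_i M_k$ one can choose a basis $(\mu_i,\nu_i)$ of $H_1(\partial_i M_k)$ such that Dehn filling with slope $1/r_i$ produces a mapping torus whose monodromy is obtained from $f$ by precomposition with $T_{c_i}^{r_i}$, where $c_i$ is the core curve of the $i$-th cusp. Since the components of $L_k$ are totally ordered by height along the suspension flow, the order of the Dehn twist factors is determined, and with the bijection between the cusps of $M_k$ and the factors of $h_k=T_{\alpha}^{u_1}T_{\gamma}^{v_1}\dots T_{\alpha}^{u_k}T_{\beta}^{v_k}$ read off from the vertical ordering of the curves in $L_k$, the filled manifold is precisely $M_{h_kf}$.

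Applying Theorem \ref{thurston} to $M_k$, there are only finitely many exceptional slopes on each cusp; in particular, among slopes of the form $1/r$ for $r\in\mathbb{Z}$, only finitely many integers are bad on each cusp. Since $M_k$ has finitely many cusps, I can choose $B_k$ so large that $u_i,v_i>B_k$ forces every filling slope to be non-exceptional, hence $M_{h_kf}$ is hyperbolic. The volume-convergence part of Theorem \ref{thurston} says $\vol(M_{h_kf})\to\vol(M_k)$ as all $u_i,v_i\to\infty$, so after enlarging $B_k$ once more I may assume
\[
\vol(M_{h_kf}) \;\geq\; \vol(M_k)-kV_8 \;\geq\; 4kV_8 - kV_8 \;=\; 3kV_8,
\]
where the second inequality is the previous lemma. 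Finally, because $M_{h_kf}$ is a finite-volume hyperbolic 3-manifold fibering over $S^1$ with fiber of negative Euler characteristic, its monodromy must be pseudo-Anosov: by Thurston's classification a reducible or finite-order monodromy would yield an essential torus or a Seifert fibered structure, contradicting hyperbolicity.

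The main bookkeeping point I expect to need care with is the Stallings identification: confirming that with a suitable choice of meridian/longitude basis on each cusp, the slopes $1/u_i$ and $1/v_i$ match up with the intended factors of $h_k$ in the correct left-to-right order dictated by the heights of the components of $L_k$ in the suspension. Once this matchup is in hand, the rest is a clean application of Theorem \ref{thurston} to the finite collection of cusps of $M_k$.
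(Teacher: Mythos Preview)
Your proposal is correct and follows essentially the same approach as the paper: identify $M_{h_kf}$ as a Dehn filling of $M_k=M_f\setminus N(L_k)$ via the Stallings identification from Section~2.3, then apply Theorem~\ref{thurston} together with Lemma~2 to get both hyperbolicity and the volume bound $\vol(M_{h_kf})\geq \vol(M_k)-kV_8\geq 3kV_8$. Your added remark that hyperbolicity of the filled mapping torus forces the monodromy to be pseudo-Anosov makes explicit a step the paper leaves implicit.
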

\begin{proof}
Let $M=M_f\backslash N(L_k)$. Let $\beta=\{\frac{1}{v_k}, \frac{1}{u_k},\dots, \frac{1}{v_1},\frac{1}{u_1} \}$, then by Theorem \ref{thurston}, $M_{h_kf}=M_\beta$, and when $u_i,v_i$ are big enough, the volume is approximatly equal to $\vol(M_f\backslash N(L_k))$. In particular, if $u_i,v_i$ are large enough, 
\[\vol(int(M_{h_kf}))\geq \vol(int(M_f\backslash N(L_k)))-kV_8 \geq 3kV_8\]
by Lemma 2.
\end{proof}

\begin{lem}
For $n,m>3$, $M_{h_kf^3_{n,m}}\cong M_{h_kf^3_{n+3,m}}\cong M_{h_kf^3_{n,m+3}}$.
\end{lem}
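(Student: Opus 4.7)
The plan is to prove the homeomorphism $M_{h_kf^3_{n,m}}\cong M_{h_kf^3_{n+3,m}}$ using Theorem \ref{fried}; the case $M_{h_kf^3_{n,m}}\cong M_{h_kf^3_{n,m+3}}$ is symmetric, with $X, p_n$ exchanged for $Y, q_m$. The strategy is to realize both mapping tori as distinct fibrations of a single underlying 3-manifold $\widehat{M}$, making essential use of the fibered face structure.

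Three observations motivate the approach. First, $h_kf^3_{n,m}$ and $h_kf^3_{n+3,m}$ agree on the complement of the disk $X$, since both $h_k$ and $q_m$ are defined independently of $n$. Second, inside $X$ the two maps differ only by the rotation pieces $p_n^3$ versus $p_{n+3}^3$, whose $n$-th and $(n+3)$-th powers both equal the full twist $T_{\partial X}$; thus the two rotations differ only by a fractional twist. Third, both monodromies induce the same number of orbits on the non-fixed punctures, namely $\gcd(n+m,3)=\gcd(n+m+3,3)$, so both mapping tori have the same cusp count $3+\gcd(n+m,3)$, which is a necessary condition for them to be homeomorphic.

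Concretely, I would set $\widehat{M}=M_{h_kf^3_{n,m}}$ with its suspension flow $\psi$, and let $F$ denote the fibered face of the Thurston norm ball containing $[S_{0,n+m+2}]$. The plan is to exhibit an integer class in $F\cdot\mathbb{R}^+$ represented by a fiber $S'$ with $|\chi(S')|=n+m+3$. By Theorem \ref{fried}, $S'$ is transverse to $\psi$ and the first-return map is pseudo-Anosov. I would then identify $S'\cong S_{0,n+m+5}$ by counting the intersections of $S'$ with the cusps of $\widehat{M}$: on the cusp coming from the $V$-$W$ puncture orbit, the original fiber meets the torus in $n+m$ parallel circles (one per puncture in the orbit), while $S'$ meets it in $n+m+3$ circles, contributing the three additional punctures. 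The first-return map of $\psi$ on $S'$ would then be verified to equal $h_kf^3_{n+3,m}$ up to isotopy by tracking its action on the punctures and on the curves $\alpha,\beta,\gamma$.

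The main obstacle is the explicit construction of the new class in $F$ and the identification of its monodromy. This requires analyzing $F$ near $[S_{0,n+m+2}]$ and determining the direction of increment that produces the desired slope change on the $V$-$W$ cusp torus. An alternative approach is to use the Dehn surgery description from Section 2.3 to write $h_kf^3_{n+3,m}$ as a Dehn surgery on $M_{h_kf^3_{n,m}}$ along flow-line neighborhoods inside $X$, and to verify that the resulting surgery is trivial. The compatibility $p_n^n=p_{n+3}^{n+3}=T_{\partial X}$ plays a crucial role, as it ensures that the extra twisting contributed by $p_{n+3}^3$ versus $p_n^3$ is absorbed by a change of basis on the relevant cusp torus, so that the Dehn surgery gives back a homeomorphic manifold.
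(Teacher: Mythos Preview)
Your framework is correct and matches the paper: both use Fried's theorem (Theorem~\ref{fried}) to exhibit a second fiber $S'$ in the same fibered cone of $\widehat{M}=M_{h_kf^3_{n,m}}$, then identify the first-return map on $S'$ with $h_kf^3_{n+3,m}$. But your proposal stops exactly where the work begins: you explicitly flag ``the explicit construction of the new class in $F$ and the identification of its monodromy'' as the main obstacle and do not resolve it. What remains is a plan, not a proof.

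The paper's missing ingredient is concrete and short. Choose a subsurface $\Sigma_1\subset S_{0,n+m+2}$ containing three consecutive punctures and bounded by two arcs $\tau_1,\tau_2$ from $x$ to $z$ with $f^3_{n,m}(\tau_1)=\tau_2$; since $h_k$ is supported in $\Sigma_0$, which is disjoint from $\Sigma_1$, one also has $h_kf^3_{n,m}(\tau_1)=\tau_2$. Push $\Sigma_1$ into $S\times[0,1]$ by a map $\eta$ whose first coordinate is the inclusion, with $\eta(\tau_1)\subset S\times\{1\}$ and $\eta(\tau_2)\subset S\times\{0\}$. Projecting to the mapping torus gives an embedded surface $\Sigma_2$ transverse to the suspension flow, so $[\Sigma_2]\in\overline{F\cdot\mathbb{R}^+}$. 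The new fiber is the explicit cut-and-paste $S'=(S\text{ cut along }\tau_2)\cup_{\partial}\Sigma_2$, representing $[S]+[\Sigma_2]$. The first-return map is then read off pointwise: it is $\eta$ on $\Sigma_1$, $f\circ\eta^{-1}$ on $\eta(\Sigma_1)$, and $h_kf^3_{n,m}$ elsewhere; unwinding this shows $S'\cong S_{0,n+m+5}$ with monodromy conjugate to $h_kf^3_{n+3,m}$.

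Two smaller issues. Your first ``observation'' that the two monodromies agree outside $X$ is not literally true: since $X\cap Y\neq\emptyset$, $p_n$ and $q_m$ do not commute, so $(q_mp_n)^3\neq q_m^3p_n^3$ and the difference is not localized to $X$. This heuristic is not needed once you have the $\Sigma_1$ construction. Your Dehn-surgery alternative, as stated, is too vague to constitute an argument; the change-of-basis remark about $p_n^n=p_{n+3}^{n+3}=T_{\partial X}$ does not by itself produce a homeomorphism of filled manifolds.
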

\begin{proof}
By Proposition 1, $\interior{M}=M_{h_kf}=M_{h_kf^3_{n,m}}$ is hyperbolic. Let $\Sigma_1$ be the subsurface in $S_{0,n+m+2}$ shown in Figure 3 containing 3 punctures, and let $\tau_1$ and $\tau_2$ denote the two components of $\partial\Sigma_1$, where $\tau_1$ and $\tau_2$ are two arcs connecting $x$ and $z$, with $\tau_2=f^3_{n,m}(\tau_1)$.

Construct a surface $\Sigma_2\subset M$ as follows. First, define a map 
\[\eta=(\eta_1,\eta_2):\Sigma_1\rightarrow S\times[0,1]\] 
so that $\eta(\Sigma_1)\cap S\times\{0\}=\tau_2\times\{0\}$, $\eta(\Sigma_1)\cap S\times\{1\}=\tau_1\times\{1\}$ and $\eta_1$ is the inclusion of $\Sigma_1$ into $S$. Since $f(\tau_1)=\tau_2$, if we project $p:S\times[0,1] \rightarrow M_f$, $\eta$ defines an embedding of $\Sigma_1/(\tau_1\isEquivTo{f} \tau_2)$, that is, $\Sigma_1$ with $\tau_1$ glued to $\tau_2$ by $f$. Since $\eta_1$ is the inclusion, $\Sigma_2=p\circ \eta(\Sigma_1/\tau_1\isEquivTo{f} \tau_2)$ is transverse to the suspension flow. By Theorem \ref{fried}, $[\Sigma_2] \in \overline{F\cdot \mathbb{R}^+}$.

We will define a surface $S\textprime$ such that $[S\textprime]=[S]+[\Sigma_2]$ in $H^1(M_f)$ as follows. Let $S_{\tau_2}$ denote the surface obtained by cutting $S$ along $\tau_2$. Then $S_{\tau_2}$ has two boundary components, denote $\tau^+_2, \tau^-_2$. Since $\tau_2=p\circ\eta(\Sigma_1)$, and $p\circ\eta(\tau_1)=p\circ\eta(\tau_2)=\tau_2 \subset S \subset M_f$, we can construct $S\textprime$ in $M_f$ by gluing $\tau^+_2$ to $\eta(\tau_2)$ and $\tau^-_2$ to $\eta(\tau_1)$, perturbed slightly to be embedded. Then $[S\textprime]=[S]+[\Sigma_2]$ and $S\textprime \pitchfork \Psi$. So $S\textprime$ is a fiber representing a class in $F\cdot \mathbb{R}^+ \subset H^1(M)$. By Theorem \ref{fried}, the first return map of $\psi$ is the monodromy $f\textprime:S\textprime \rightarrow S\textprime$. This is given by 
\[
f\textprime(x)=
 \begin{cases} 
   \eta(x) & \text{if } x \in \Sigma_1 \\
   f\circ\eta^{-1}(x)       & \text{if } x \in \eta(\Sigma_1)\\
   f(x) & \text{otherwise}
  \end{cases}
\]
See Figure 5. As indicated by Figure 6, $S\textprime \cong S_{0,n+m+5}$, and up to conjugation, $f\textprime=f^3_{n+3,m}$. Therefore, $M_{h_kf^3_{n,m}}\cong M_{h_kf^3_{n+3,m}}$. Similarly, if we pick another subsurface in $Y$ homeomorphic to $\Sigma_0$, one can show $M_{h_kf^3_{n,m}}\cong M_{h_kf^3_{n,m+3}}$.

\begin{figure}[H]\centering

\tikzset{every picture/.style={line width=0.75pt}} 

\tikzset{every picture/.style={line width=0.75pt}} 



\caption{Left: $S$. Right: $S\textprime$}  
\end{figure}

\end{proof}

\begin{lem}
For fixed $k$, and fixed $u_i,v_i\geq B_k$ (the constant from Proposition 3), there exists $R>0$ so that if $n=m\geq R$, then $h_kf^3_{n,n}: S_{0,2n+2}\to S_{0,2n+2}$ has $\log\lambda(h_kf^3_{n,n})\leq 54\frac{\log 2n+2}{2n+2}$.
\end{lem}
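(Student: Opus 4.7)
The strategy is to build a spine $G \subset S_{0,2n+2}$ adapted to the rotational symmetry of $f_{n,n} = q_n p_n$, homotope $h_k f^3_{n,n}$ to a graph map $g:G\to G$, and then apply Proposition~\ref{pf} to a sufficiently high power of the transition matrix in order to bound $\lambda(h_k f^3_{n,n})$ via counting paths in the associated directed graph $\Gamma$. Specifically, the Perron--Frobenius bound applied to $T^l$ gives $\lambda^l \le \max_i N(V_i,l)$, and we will choose $l = 2n+2$ so that $(2n+2)^{54}$ is the relevant bound for $N(V_i,l)$.

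For the spine, I would take a ``radial'' graph consisting of hub vertices near $x$ and $y$, a spoke edge running to a small loop around each of the $n$ punctures surrounding $x$ (respectively $y$), plus a bounded number of connecting edges through $z$ and near the shared puncture $V_1$. The total edge count is $\Theta(n)$. Under $p_n$ the spokes around $x$ cyclically permute, and similarly under $q_n$ around $y$, so the graph map $g_0$ induced by $f^3_{n,n}$ alone sends all but a bounded number of edges of $G$ to single edges of $G$. Using the conjugation identity established in the paragraph preceding Theorem~\ref{main}, we may place $\Sigma_0$ far from $V_1$, so that the support of $h_k$ meets a bounded-size subgraph $G_0 \subset G$ which is disjoint from the ``seam'' near $V_1$. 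The induced graph map $g$ for $h_k f^3_{n,n}$ then agrees with $g_0$ outside $G_0$, and on $G_0$ sends each edge to an edge path of length at most $L_k$, where $L_k$ depends only on $k$ and the fixed twist exponents $u_i, v_i$, not on $n$.

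The core estimate is then $N(V_i, l) \le (2n+2)^{54}$ for $l = 2n+2$. Since $p_n$ and $q_n$ have order $n$, a spoke edge is cyclically rotated back near its starting position roughly every $n$ iterations of $g$, and therefore the forward orbit of any edge in $\Gamma$ passes through the exceptional subgraph $G_0$ at most $O_k(1)$ times during $l = 2n+2$ steps. Each such visit multiplies the number of path extensions by at most $L_k$, while each step outside $G_0$ extends the path by at most a constant. Combining these gives $N(V_i, l) \le L_k^{O_k(1)} \cdot \mathrm{poly}(n)$, which for $n$ large enough (depending on $k$ and the $u_i, v_i$) is bounded by $(2n+2)^{54}$. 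Taking $l$-th roots in $\lambda^l \le N(V_i, l)$ then yields
\[
\log \lambda(h_k f^3_{n,n}) \le \frac{\log N(V_i, l)}{l} \le 54\, \frac{\log(2n+2)}{2n+2}.
\]

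The main obstacle is the combinatorial bookkeeping for $N(V_i, l)$: one must verify rigorously that orbits in $\Gamma$ cannot linger in $G_0$, i.e.\ that the rotational structure of $f_{n,n}$ pushes edges back out of the exceptional region on each cycle. This rests on the fact that the nontrivial pseudo-Anosov behavior of the Hironaka--Kin map $f_{n,n}$ is localized near the seam at $V_1$, which we have arranged (via conjugation) to be disjoint from $\Sigma_0$. Once this orbit-control statement is in hand, the constant $54$ arises as a bookkeeping bound absorbing the bounded number of excursions into $G_0$ and the fixed complexity $L_k$ of $h_k$, which is why the lemma only asserts the inequality for all sufficiently large $n \ge R$.
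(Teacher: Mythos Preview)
Your overall strategy matches the paper's: build the Hironaka--Kin spine, homotope $h_k f^3_{n,n}$ to a graph map, and bound the Perron--Frobenius eigenvalue via path-counting in the directed graph (Proposition~\ref{pf} applied to a power of the transition matrix). The paper, however, takes a much shorter window $l = n/13$ rather than $l = 2n+2$, and this difference is exactly where your argument develops a gap.

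The paper's point is that in the directed graph $\Gamma_k$ there are precisely two regions where branching occurs: the seam subgraph $D$ coming from $f^3_{n,n}$, sitting near $e_1,e'_1$, and the bipartite subgraph $D_k$ coming from $h_k$, sitting near $e_j$ with $j\approx n/2$. These are at distance $\Theta(n)$ along the six big loops of $\Gamma_k$, so a path of length $n/13$ can meet \emph{at most one} of them. The paper then bounds the number of length-$n/13$ paths \emph{between two specified vertices}: through $D_k$ this is at most $E_k$ by the bipartite structure, and through $D$ the only freedom is how many times the path wraps the length-one self-loop at $e'_1$, which is pinned down once both endpoints and the total length are fixed, giving at most $2$. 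Summing over the $2n$ possible terminal vertices yields $N(V_i,n/13)\le 2n\max\{2,E_k\}$, hence $\log\lambda \le \frac{13}{n}\log(2nN_k)$, and the constant $54$ drops out of the arithmetic for $n$ large.

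Your accounting with $l=2n+2$ does not go through as written. The clause ``each step outside $G_0$ extends the path by at most a constant'', iterated over $2n+2$ steps, gives a bound of the shape $C^{2n+2}$, not $\mathrm{poly}(n)$; to land on a polynomial you would need branching to occur at only $O(1)$ of those steps. That fails at the seam: the Hironaka--Kin train track has a genuine self-loop at $e'_1$, so a path can sit at $e'_1$ for an unbounded number of consecutive steps, and the number of ``decision points'' inside $D$ over a window of length $2n+2$ is not $O(1)$. Your final paragraph flags lingering in $G_0$ as the obstacle, but $D_k$ is bipartite with no self-loops, so lingering there is impossible; the actual danger is lingering in $D$, which you do not address. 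One can still extract a polynomial bound at $l=2n+2$ by a more careful itinerary count (the $e'_1$-loop contributes no branching once you record how long you stay), but that is a different and more delicate argument than the one you sketched, and the paper avoids it entirely by choosing $l$ short enough that a path meets $D$ at most once and then fixing both endpoints so the $e'_1$-winding is forced.
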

\begin{proof}
We can get the spine G as in Figure 7 on $S_{0,n+m+2}$. This is in fact a train track for $f_{n,m}$, as described in \cite{hironaka}, and hence also $f$. Then $f$ induces a map $g:G\to G$.

\begin{figure}[!ht]\centering

\tikzset{every picture/.style={line width=0.75pt}} 

\begin{tikzpicture}[x=0.75pt,y=0.75pt,yscale=-0.8,xscale=0.8]

\draw    (177.52,207.36) .. controls (171.45,216.18) and (175.89,220.15) .. (186.21,221.45) ;

\draw    (186.21,221.45) .. controls (168.28,223.86) and (164.37,231.88) .. (178.97,239.38) ;

\draw    (178.97,239.38) .. controls (166.6,238.11) and (163.04,238.1) .. (160.15,249.63) ;

\draw    (158.7,202.24) .. controls (161.6,213.76) and (167.39,215.05) .. (177.52,207.36) ;

\draw    (177.52,207.36) -- (220.95,154.85) ;

\draw    (186.21,221.45) -- (310.71,220.92) ;

\draw    (178.97,239.38) -- (247.01,263.72) ;

\draw    (160.15,249.63) -- (161.6,308.55) ;

\draw    (158.7,202.24) -- (157.25,138.2) ;

\draw    (157.25,138.2) .. controls (159.14,126.99) and (138.37,106.11) .. (156.31,106.95) .. controls (174.24,107.78) and (158.2,120.31) .. (157.25,138.2) -- cycle ;

\draw    (220.95,154.85) .. controls (229.41,146.34) and (224.77,118.83) .. (239.32,128.14) .. controls (253.87,137.45) and (232.71,140.27) .. (220.95,154.85) -- cycle ;

\draw    (247.01,263.72) .. controls (258.25,269.15) and (287.57,259.12) .. (280.49,273.72) .. controls (273.41,288.33) and (265.66,270.68) .. (247.01,263.72) -- cycle ;

\draw    (161.6,308.54) .. controls (159.85,319.77) and (180.87,340.44) .. (162.93,339.78) .. controls (144.98,339.12) and (160.87,326.44) .. (161.6,308.54) -- cycle ;

\draw    (310.71,220.92) .. controls (319.56,220.76) and (322.28,220.44) .. (325.55,211.31) ;

\draw    (474.96,234.62) .. controls (481.09,225.84) and (476.67,221.84) .. (466.36,220.49) ;

\draw    (466.36,220.49) .. controls (484.3,218.17) and (488.27,210.17) .. (473.71,202.6) ;

\draw    (473.71,202.6) .. controls (486.07,203.93) and (489.63,203.96) .. (492.6,192.44) ;

\draw    (493.75,239.84) .. controls (490.92,228.3) and (485.14,226.99) .. (474.96,234.62) ;

\draw    (474.96,234.62) -- (431.2,286.92) ;

\draw    (466.36,220.49) -- (341.85,220.41) ;

\draw    (473.71,202.6) -- (405.82,177.92) ;

\draw    (492.6,192.44) -- (491.52,133.52) ;

\draw    (493.75,239.84) -- (494.79,303.89) ;

\draw    (494.79,303.89) .. controls (492.83,315.08) and (513.47,336.06) .. (495.54,335.14) .. controls (477.61,334.22) and (493.73,321.77) .. (494.79,303.89) -- cycle ;

\draw    (431.2,286.92) .. controls (422.69,295.39) and (427.16,322.92) .. (412.66,313.54) .. controls (398.16,304.15) and (419.34,301.44) .. (431.2,286.92) -- cycle ;

\draw    (405.82,177.92) .. controls (394.62,172.43) and (365.24,182.32) .. (372.41,167.75) .. controls (379.58,153.19) and (387.22,170.87) .. (405.82,177.92) -- cycle ;

\draw    (491.52,133.52) .. controls (493.34,122.3) and (472.45,101.52) .. (490.39,102.28) .. controls (508.33,103.03) and (492.36,115.63) .. (491.52,133.52) -- cycle ;

\draw    (341.85,220.41) .. controls (333,220.52) and (328.45,217.71) .. (325.55,211.31) ;

\draw    (325.55,211.31) .. controls (327.98,194.74) and (301.23,163.86) .. (324.34,165.09) .. controls (347.45,166.33) and (326.77,184.85) .. (325.55,211.31) -- cycle ;

\draw  [color={rgb, 255:red, 0; green, 0; blue, 0 }  ][line width=3.75] [line join = round][line cap = round] (234.47,135.84) .. controls (234.47,135.84) and (234.47,135.84) .. (234.47,135.84) ;
\draw  [color={rgb, 255:red, 0; green, 0; blue, 0 }  ][line width=3.75] [line join = round][line cap = round] (324.95,182.02) .. controls (324.95,182.02) and (324.95,182.02) .. (324.95,182.02) ;
\draw  [color={rgb, 255:red, 0; green, 0; blue, 0 }  ][line width=3.75] [line join = round][line cap = round] (157.19,116.99) .. controls (157.19,116.99) and (157.19,116.99) .. (157.19,116.99) ;
\draw  [color={rgb, 255:red, 0; green, 0; blue, 0 }  ][line width=3.75] [line join = round][line cap = round] (273.11,272.5) .. controls (273.11,272.5) and (273.11,272.5) .. (273.11,272.5) ;
\draw  [color={rgb, 255:red, 0; green, 0; blue, 0 }  ][line width=3.75] [line join = round][line cap = round] (161.43,331.88) .. controls (161.43,331.88) and (161.43,331.88) .. (161.43,331.88) ;
\draw  [color={rgb, 255:red, 0; green, 0; blue, 0 }  ][line width=3.75] [line join = round][line cap = round] (164.73,226.32) .. controls (164.73,226.32) and (164.73,226.32) .. (164.73,226.32) ;
\draw  [color={rgb, 255:red, 0; green, 0; blue, 0 }  ][line width=3.75] [line join = round][line cap = round] (491.77,111.33) .. controls (491.77,111.33) and (491.77,111.33) .. (491.77,111.33) ;
\draw  [color={rgb, 255:red, 0; green, 0; blue, 0 }  ][line width=3.75] [line join = round][line cap = round] (382.44,170.24) .. controls (382.44,170.24) and (382.44,170.24) .. (382.44,170.24) ;
\draw  [color={rgb, 255:red, 0; green, 0; blue, 0 }  ][line width=3.75] [line join = round][line cap = round] (489.89,217.36) .. controls (489.89,217.36) and (489.89,217.36) .. (489.89,217.36) ;
\draw  [color={rgb, 255:red, 0; green, 0; blue, 0 }  ][line width=3.75] [line join = round][line cap = round] (418.26,306.9) .. controls (418.26,306.9) and (418.26,306.9) .. (418.26,306.9) ;
\draw  [color={rgb, 255:red, 0; green, 0; blue, 0 }  ][line width=3.75] [line join = round][line cap = round] (495.54,327.64) .. controls (495.54,327.64) and (495.54,327.64) .. (495.54,327.64) ;
\draw  [color={rgb, 255:red, 0; green, 0; blue, 0 }  ][line width=3.75] [line join = round][line cap = round] (328.72,333.76) .. controls (328.72,333.76) and (328.72,333.76) .. (328.72,333.76) ;
\draw    (511.17,198.88) .. controls (505.09,207.7) and (509.53,211.67) .. (519.85,212.97) ;

\draw    (519.85,212.97) .. controls (501.93,215.38) and (498.01,223.4) .. (512.62,230.9) ;

\draw    (512.62,230.9) .. controls (500.24,229.62) and (496.69,229.62) .. (493.79,241.14) ;

\draw    (492.35,193.75) .. controls (495.24,205.28) and (501.03,206.56) .. (511.17,198.88) ;

\draw    (511.17,198.88) -- (554.6,146.36) ;

\draw    (519.85,212.97) -- (609.67,213.36) ;

\draw    (512.62,230.9) -- (580.66,255.23) ;

\draw    (554.6,146.36) .. controls (563.06,137.86) and (558.41,110.35) .. (572.97,119.66) .. controls (587.52,128.97) and (566.36,131.79) .. (554.6,146.36) -- cycle ;

\draw    (580.66,255.23) .. controls (591.89,260.67) and (621.21,250.64) .. (614.13,265.24) .. controls (607.06,279.84) and (599.31,262.19) .. (580.66,255.23) -- cycle ;

\draw  [color={rgb, 255:red, 0; green, 0; blue, 0 }  ][line width=3.75] [line join = round][line cap = round] (568.12,127.36) .. controls (568.12,127.36) and (568.12,127.36) .. (568.12,127.36) ;
\draw  [color={rgb, 255:red, 0; green, 0; blue, 0 }  ][line width=3.75] [line join = round][line cap = round] (606.76,264.02) .. controls (606.76,264.02) and (606.76,264.02) .. (606.76,264.02) ;
\draw    (603.99,213.25) .. controls (615.17,215.27) and (636.3,194.75) .. (635.25,212.68) .. controls (634.19,230.6) and (621.86,214.41) .. (603.99,213.25) -- cycle ;

\draw  [color={rgb, 255:red, 0; green, 0; blue, 0 }  ][line width=3.75] [line join = round][line cap = round] (625.19,213.43) .. controls (625.19,213.43) and (625.19,213.43) .. (625.19,213.43) ;
\draw    (141.31,243.11) .. controls (147.44,234.32) and (143.03,230.33) .. (132.71,228.97) ;

\draw    (132.71,228.97) .. controls (150.65,226.65) and (154.62,218.65) .. (140.07,211.08) ;

\draw    (140.07,211.08) .. controls (152.43,212.41) and (155.98,212.44) .. (158.95,200.93) ;

\draw    (160.1,248.32) .. controls (157.28,236.78) and (151.49,235.47) .. (141.31,243.11) ;

\draw    (141.31,243.11) -- (97.55,295.4) ;

\draw    (132.71,228.97) -- (46.99,228.91) ;

\draw    (140.07,211.08) -- (72.18,186.41) ;

\draw    (97.55,295.4) .. controls (89.04,303.87) and (93.51,331.4) .. (79.01,322.02) .. controls (64.52,312.64) and (85.7,309.92) .. (97.55,295.4) -- cycle ;

\draw    (72.18,186.41) .. controls (60.98,180.92) and (31.6,190.8) .. (38.77,176.24) .. controls (45.94,161.67) and (53.57,179.36) .. (72.18,186.41) -- cycle ;

\draw  [color={rgb, 255:red, 0; green, 0; blue, 0 }  ][line width=3.75] [line join = round][line cap = round] (48.8,178.72) .. controls (48.8,178.72) and (48.8,178.72) .. (48.8,178.72) ;
\draw  [color={rgb, 255:red, 0; green, 0; blue, 0 }  ][line width=3.75] [line join = round][line cap = round] (84.61,315.38) .. controls (84.61,315.38) and (84.61,315.38) .. (84.61,315.38) ;
\draw    (51.86,229) .. controls (40.7,226.87) and (19.38,247.2) .. (20.6,229.28) .. controls (21.81,211.37) and (34,227.68) .. (51.86,229) -- cycle ;

\draw  [color={rgb, 255:red, 0; green, 0; blue, 0 }  ][line width=3.75] [line join = round][line cap = round] (30.65,228.62) .. controls (30.65,228.62) and (30.65,228.62) .. (30.65,228.62) ;

\draw (245.31,211.24) node   {$e_{1}$};
\draw (194.41,166.94) node   {$e_{2}$};
\draw (147.29,161.29) node   {$e_{3}$};
\draw (212.32,256.48) node   {$e_{8}$};
\draw (180.28,284.75) node   {$e_{7}$};
\draw (394.23,208.41) node   {$e'_{1}$};
\draw (431.93,170.71) node   {$e'_{2}$};
\draw (504.5,159.4) node   {$e'_{3}$};
\draw (514.87,269.67) node   {$e'_{7}$};
\draw (441.35,257.42) node   {$e'_{8}$};
\draw (325.42,156.57) node   {$a_{1}$};
\draw (244.37,121.7) node   {$a_{2}$};
\draw (157.66,98.14) node   {$a_{3}$};
\draw (163.31,348.84) node   {$a_{7}$};
\draw (292.43,279.1) node   {$a_{8}$};
\draw (185.93,212.18) node [scale=0.8]  {$b_{1}$};
\draw (168.97,201.81) node [scale=0.8]  {$b_{2}$};
\draw (183.1,231.97) node [scale=0.8]  {$b_{n}$};
\draw (174.62,248) node [scale=0.8]  {$b_{n-1}$};
\draw (365.01,159.4) node   {$a'_{2}$};
\draw (491.3,94.37) node   {$a'_{3}$};
\draw (405.54,323.4) node   {$a'_{8}$};
\draw (496.96,346.96) node   {$a'_{7}$};
\draw (166.14,217.84) node [scale=0.8]  {$x$};
\draw (489.42,209.35) node [scale=0.8]  {$y$};
\draw (468.68,210.3) node [scale=0.8]  {$b'_{1}$};
\draw (483.76,194.27) node [scale=0.8]  {$b'_{2}$};
\draw (468.68,228.2) node [scale=0.8]  {$b'_{m}$};
\draw (482.82,245.17) node [scale=0.8]  {$b'_{m-1}$};
\draw (329.19,323.4) node   {$z$};
\draw (532.77,156.57) node   {$e'_{4}$};
\draw (520.52,203.7) node [scale=0.8]  {$b'_{4}$};
\draw (502.61,194.27) node [scale=0.8]  {$b'_{3}$};
\draw (577.07,110.39) node   {$a'_{4}$};
\draw (643.05,210.3) node   {$a'_{5}$};
\draw (623.25,264.96) node   {$a'_{6}$};
\draw (517.69,221.61) node [scale=0.8]  {$b'_{5}$};
\draw (505.44,239.51) node [scale=0.8]  {$b'_{6}$};
\draw (569.53,202.76) node   {$e'_{5}$};
\draw (551.62,255.54) node   {$e'_{6}$};
\draw (67.18,321.51) node   {$a_{6}$};
\draw (13.45,221.61) node   {$a_{5}$};
\draw (113.36,260.25) node   {$e_{6}$};
\draw (89.8,217.84) node   {$e_{5}$};
\draw (151.06,247.05) node [scale=0.8]  {$b_{6}$};
\draw (135.04,235.74) node [scale=0.8]  {$b_{5}$};
\draw (138.81,217.84) node [scale=0.8]  {$b_{4}$};
\draw (150.12,200.87) node [scale=0.8]  {$b_{3}$};
\draw (103.93,185.79) node   {$e_{4}$};
\draw (31.36,167.88) node   {$a_{4}$};

\end{tikzpicture}

\caption{Spine of $S_{0,n+m+2}$ when $n=m=8$}  
\end{figure}
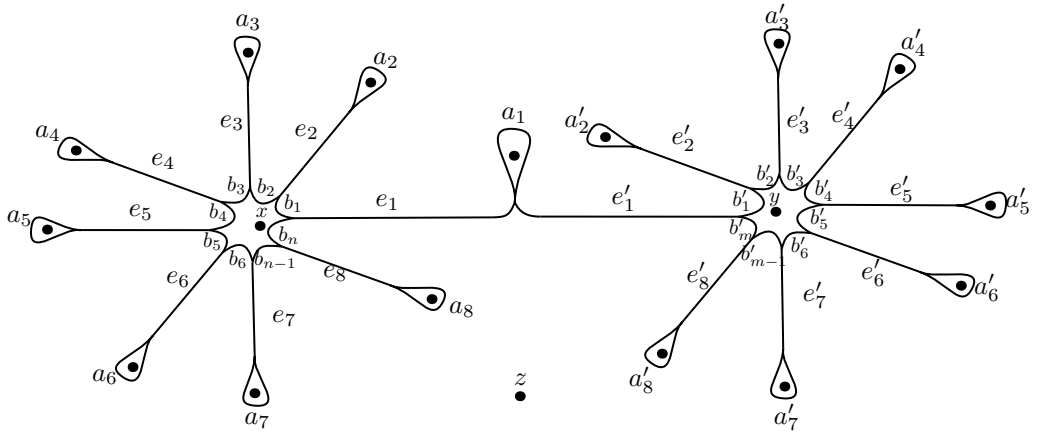

The graph $G$ contains the loop edges $a_1, a_2, \dots, a_n$, and $a\textprime_2, a\textprime_3, \dots, a\textprime_m$, which $g$ acts on as a permutation, and ``peripheral" edges $b_1, b_2, \dots, b_n$, and $b\textprime_1, b\textprime_2, \dots, b\textprime_m$, which $g$ also acts on them as a permutation. The transition matrix has the following form: 
\[
T=
\left[
\begin{array}{c|c}
A & *\\
\hline
0 & P\\
\end{array}
\right]
\]
where $P$ corresponds to $e_1, e_2, \dots, e_n$, $e\textprime_1, e\textprime_2, \dots, e\textprime_m$. The matrix $A$ is a permutation matrix corresponds to $a_1, a_2, \dots, a_n$,  $a\textprime_1, a\textprime_2, \dots, a\textprime_m$, $b_1, b_2, \dots, b_n$, $b\textprime_1, b\textprime_2, \dots, b\textprime_m$. So the largest eigenvalue of $T$ (in absolute value) will be the largest eigenvalue of $P$.
If we remove all the non-contributing edges, we have 
\[
\begin{array}{rcl} 
e_i & \to & e_{i+3} \mbox{ }  \mbox{ for  } 1\leq i\leq n-3 \\ 
e\textprime_i & \to & e\textprime_{i+3} \mbox{ } \mbox{ for  } 1< i\leq m-2  \\
e\textprime_1 & \to & e\textprime_4e\textprime_4e\textprime_3e\textprime_3e\textprime_2e\textprime_2e\textprime_1e_1e_2e_2e_3e_3e_4  \\
e_n & \to & e_3e_3e_2e_2e_1e\textprime_1e\textprime_2e\textprime_2e\textprime_3e\textprime_3e\textprime_4   \\
e\textprime_m & \to & e\textprime_3e\textprime_3e\textprime_2e\textprime_2e\textprime_1e_1e_2e_2e_3  \\
e_{n-1} & \to & e_2e_2e_1e\textprime_1e\textprime_2e\textprime_2e\textprime_3  \\
e\textprime_{m-1} & \to & e\textprime_2e\textprime_2e\textprime_1e_1e_2 \\
e_{n-2} & \to & e_1e\textprime_1e\textprime_2 
\end{array}
\]

\begin{figure}[!ht]\centering

\tikzset{every picture/.style={line width=0.75pt}} 

\begin{tikzpicture}[x=0.75pt,y=0.75pt,yscale=-1,xscale=1]

\draw   (7,150.45) .. controls (7,81.01) and (105.56,24.73) .. (227.13,24.73) .. controls (348.71,24.73) and (447.27,81.01) .. (447.27,150.45) .. controls (447.27,219.88) and (348.71,276.17) .. (227.13,276.17) .. controls (105.56,276.17) and (7,219.88) .. (7,150.45) -- cycle ;
\draw   (31.29,150.45) .. controls (31.29,88.68) and (118.97,38.6) .. (227.13,38.6) .. controls (335.3,38.6) and (422.98,88.68) .. (422.98,150.45) .. controls (422.98,212.22) and (335.3,262.3) .. (227.13,262.3) .. controls (118.97,262.3) and (31.29,212.22) .. (31.29,150.45) -- cycle ;
\draw   (54.74,150.45) .. controls (54.74,96.07) and (131.92,51.99) .. (227.13,51.99) .. controls (322.34,51.99) and (399.53,96.07) .. (399.53,150.45) .. controls (399.53,204.82) and (322.34,248.9) .. (227.13,248.9) .. controls (131.92,248.9) and (54.74,204.82) .. (54.74,150.45) -- cycle ;
\draw   (80.28,150.45) .. controls (80.28,104.13) and (146.03,66.58) .. (227.13,66.58) .. controls (308.23,66.58) and (373.98,104.13) .. (373.98,150.45) .. controls (373.98,196.77) and (308.23,234.31) .. (227.13,234.31) .. controls (146.03,234.31) and (80.28,196.77) .. (80.28,150.45) -- cycle ;
\draw   (106.42,150.45) .. controls (106.42,112.37) and (160.47,81.51) .. (227.13,81.51) .. controls (293.8,81.51) and (347.85,112.37) .. (347.85,150.45) .. controls (347.85,188.52) and (293.8,219.39) .. (227.13,219.39) .. controls (160.47,219.39) and (106.42,188.52) .. (106.42,150.45) -- cycle ;
\draw   (133.69,150.45) .. controls (133.69,120.98) and (175.53,97.08) .. (227.13,97.08) .. controls (278.74,97.08) and (320.57,120.98) .. (320.57,150.45) .. controls (320.57,179.92) and (278.74,203.81) .. (227.13,203.81) .. controls (175.53,203.81) and (133.69,179.92) .. (133.69,150.45) -- cycle ;
\draw  [fill={rgb, 255:red, 255; green, 255; blue, 255 }  ,fill opacity=1 ] (194.83,7.83) -- (255.6,7.83) -- (255.6,114.17) -- (194.83,114.17) -- cycle ;
\draw  [color={rgb, 255:red, 0; green, 0; blue, 0 }  ][line width=2.25] [line join = round][line cap = round] (128.39,37.65) .. controls (128.39,37.65) and (128.39,37.65) .. (128.39,37.65) ;
\draw  [color={rgb, 255:red, 0; green, 0; blue, 0 }  ][line width=2.25] [line join = round][line cap = round] (161.61,29.98) .. controls (161.61,29.98) and (161.61,29.98) .. (161.61,29.98) ;
\draw  [color={rgb, 255:red, 0; green, 0; blue, 0 }  ][line width=2.25] [line join = round][line cap = round] (163.31,44.46) .. controls (163.31,44.46) and (163.31,44.46) .. (163.31,44.46) ;
\draw  [color={rgb, 255:red, 0; green, 0; blue, 0 }  ][line width=2.25] [line join = round][line cap = round] (130.09,52.98) .. controls (130.09,52.98) and (130.09,52.98) .. (130.09,52.98) ;
\draw  [color={rgb, 255:red, 0; green, 0; blue, 0 }  ][line width=2.25] [line join = round][line cap = round] (165.02,58.09) .. controls (165.02,58.09) and (165.02,58.09) .. (165.02,58.09) ;
\draw  [color={rgb, 255:red, 0; green, 0; blue, 0 }  ][line width=2.25] [line join = round][line cap = round] (133.5,67.46) .. controls (133.5,67.46) and (133.5,67.46) .. (133.5,67.46) ;
\draw  [color={rgb, 255:red, 0; green, 0; blue, 0 }  ][line width=2.25] [line join = round][line cap = round] (165.87,74.28) .. controls (165.87,74.28) and (165.87,74.28) .. (165.87,74.28) ;
\draw  [color={rgb, 255:red, 0; green, 0; blue, 0 }  ][line width=2.25] [line join = round][line cap = round] (136.91,84.5) .. controls (136.91,84.5) and (136.91,84.5) .. (136.91,84.5) ;
\draw  [color={rgb, 255:red, 0; green, 0; blue, 0 }  ][line width=2.25] [line join = round][line cap = round] (166.72,90.46) .. controls (166.72,90.46) and (166.72,90.46) .. (166.72,90.46) ;
\draw  [color={rgb, 255:red, 0; green, 0; blue, 0 }  ][line width=2.25] [line join = round][line cap = round] (142.02,101.54) .. controls (142.02,101.54) and (142.02,101.54) .. (142.02,101.54) ;
\draw  [color={rgb, 255:red, 0; green, 0; blue, 0 }  ][line width=2.25] [line join = round][line cap = round] (169.28,108.35) .. controls (169.28,108.35) and (169.28,108.35) .. (169.28,108.35) ;
\draw  [color={rgb, 255:red, 0; green, 0; blue, 0 }  ][line width=2.25] [line join = round][line cap = round] (145.43,124.54) .. controls (145.43,124.54) and (145.43,124.54) .. (145.43,124.54) ;
\draw  [color={rgb, 255:red, 0; green, 0; blue, 0 }  ][line width=2.25] [line join = round][line cap = round] (325.02,37.65) .. controls (325.02,37.65) and (325.02,37.65) .. (325.02,37.65) ;
\draw  [color={rgb, 255:red, 0; green, 0; blue, 0 }  ][line width=2.25] [line join = round][line cap = round] (291.8,29.98) .. controls (291.8,29.98) and (291.8,29.98) .. (291.8,29.98) ;
\draw  [color={rgb, 255:red, 0; green, 0; blue, 0 }  ][line width=2.25] [line join = round][line cap = round] (290.1,44.46) .. controls (290.1,44.46) and (290.1,44.46) .. (290.1,44.46) ;
\draw  [color={rgb, 255:red, 0; green, 0; blue, 0 }  ][line width=2.25] [line join = round][line cap = round] (323.32,52.98) .. controls (323.32,52.98) and (323.32,52.98) .. (323.32,52.98) ;
\draw  [color={rgb, 255:red, 0; green, 0; blue, 0 }  ][line width=2.25] [line join = round][line cap = round] (288.4,58.09) .. controls (288.4,58.09) and (288.4,58.09) .. (288.4,58.09) ;
\draw  [color={rgb, 255:red, 0; green, 0; blue, 0 }  ][line width=2.25] [line join = round][line cap = round] (319.91,67.46) .. controls (319.91,67.46) and (319.91,67.46) .. (319.91,67.46) ;
\draw  [color={rgb, 255:red, 0; green, 0; blue, 0 }  ][line width=2.25] [line join = round][line cap = round] (287.54,74.28) .. controls (287.54,74.28) and (287.54,74.28) .. (287.54,74.28) ;
\draw  [color={rgb, 255:red, 0; green, 0; blue, 0 }  ][line width=2.25] [line join = round][line cap = round] (316.51,83.65) .. controls (316.51,83.65) and (316.51,83.65) .. (316.51,83.65) ;
\draw  [color={rgb, 255:red, 0; green, 0; blue, 0 }  ][line width=2.25] [line join = round][line cap = round] (286.69,90.46) .. controls (286.69,90.46) and (286.69,90.46) .. (286.69,90.46) ;
\draw  [color={rgb, 255:red, 0; green, 0; blue, 0 }  ][line width=2.25] [line join = round][line cap = round] (311.4,101.54) .. controls (311.4,101.54) and (311.4,101.54) .. (311.4,101.54) ;
\draw  [color={rgb, 255:red, 0; green, 0; blue, 0 }  ][line width=2.25] [line join = round][line cap = round] (284.14,108.35) .. controls (284.14,108.35) and (284.14,108.35) .. (284.14,108.35) ;
\draw  [color={rgb, 255:red, 0; green, 0; blue, 0 }  ][line width=2.25] [line join = round][line cap = round] (308.84,124.54) .. controls (308.84,124.54) and (308.84,124.54) .. (308.84,124.54) ;
\draw   (174.54,25.04) -- (182.39,26.99) -- (176.12,32.11) ;
\draw   (139.61,31) -- (147.47,32.95) -- (141.19,38.07) ;
\draw   (174.54,39.52) -- (182.39,41.47) -- (176.12,46.59) ;
\draw   (139.61,46.34) -- (147.47,48.28) -- (141.19,53.4) ;
\draw   (174.54,53.15) -- (182.39,55.1) -- (176.12,60.22) ;
\draw   (143.02,59.97) -- (150.88,61.91) -- (144.6,67.03) ;
\draw   (174.54,68.48) -- (182.39,70.43) -- (176.12,75.55) ;
\draw   (145.15,76.99) -- (153.23,77.47) -- (147.99,83.64) ;
\draw   (175.39,84.67) -- (183.25,86.62) -- (176.97,91.74) ;
\draw   (175.13,102.16) -- (183.15,103.3) -- (177.43,109.03) ;
\draw   (147.65,94.15) -- (155.74,94.42) -- (150.67,100.73) ;
\draw   (151.68,114.6) -- (159.69,113.47) -- (155.79,120.56) ;
\draw   (268.07,24.18) -- (275.33,27.76) -- (268.12,31.42) ;
\draw   (305.31,29.62) -- (311.92,34.29) -- (304.22,36.78) ;
\draw   (269.45,37.35) -- (276.15,41.89) -- (268.5,44.52) ;
\draw   (267.75,50.98) -- (274.45,55.52) -- (266.8,58.15) ;
\draw   (266.9,66.31) -- (273.6,70.85) -- (265.94,73.49) ;
\draw   (269.45,82.49) -- (276.15,87.03) -- (268.5,89.67) ;
\draw   (268.29,99.17) -- (274.36,104.52) -- (266.43,106.17) ;
\draw   (304.9,43.81) -- (311,49.13) -- (303.08,50.82) ;
\draw   (301.52,57.42) -- (307.58,62.79) -- (299.65,64.42) ;
\draw   (299.1,73.53) -- (305,79.08) -- (297.02,80.47) ;
\draw   (297.86,90.35) -- (303.17,96.46) -- (295.1,97.04) ;
\draw   (294.79,109.82) -- (299.66,116.28) -- (291.56,116.3) ;

\draw (225.93,56.74) node   {$D$};
\draw (162.04,22.31) node [scale=0.7]  {$e'_{m-2}$};
\draw (124.56,29.98) node [scale=0.7]  {$e'_{m-5}$};
\draw (162.04,37.65) node [scale=0.7]  {$e'_{m-3}$};
\draw (126.26,46.17) node [scale=0.7]  {$e'_{m-6}$};
\draw (129.67,60.65) node [scale=0.7]  {$e'_{m-7}$};
\draw (161.19,52.13) node [scale=0.7]  {$e'_{m-4}$};
\draw (163.74,67.46) node [scale=0.7]  {$e_{n-3}$};
\draw (133.93,76.83) node [scale=0.7]  {$e_{n-6}$};
\draw (138.19,93.02) node [scale=0.7]  {$e_{n-7}$};
\draw (164.59,83.65) node [scale=0.7]  {$e_{n-4}$};
\draw (168,99.83) node [scale=0.7]  {$e_{n-5}$};
\draw (139.89,115.17) node [scale=0.7]  {$e_{n-8}$};
\draw (93.89,35.23) node [scale=0.9,rotate=-335.11]  {$\dotsc $};
\draw (99.85,52.27) node [scale=0.9,rotate=-335.11]  {$\dotsc $};
\draw (104.96,66.75) node [scale=0.9,rotate=-335.11]  {$\dotsc $};
\draw (110.07,82.09) node [scale=0.9,rotate=-335.11]  {$\dotsc $};
\draw (116.04,101.68) node [scale=0.9,rotate=-335.11]  {$\dotsc $};
\draw (125.41,124.68) node [scale=0.9,rotate=-335.11]  {$\dotsc $};
\draw (291.52,22.31) node [scale=0.7]  {$e'_{7}$};
\draw (325.59,29.98) node [scale=0.7]  {$e'_{10}$};
\draw (291.52,37.65) node [scale=0.7]  {$e'_{6}$};
\draw (324.74,47.02) node [scale=0.7]  {$e'_{9}$};
\draw (288.96,51.28) node [scale=0.7]  {$e'_{5}$};
\draw (322.19,60.65) node [scale=0.7]  {$e'_{8}$};
\draw (288.11,67.46) node [scale=0.7]  {$e_{6}$};
\draw (317.93,77.69) node [scale=0.7]  {$e_{9}$};
\draw (287.26,83.65) node [scale=0.7]  {$e_{5}$};
\draw (312.81,94.72) node [scale=0.7]  {$e_{8}$};
\draw (284.7,100.69) node [scale=0.7]  {$e_{7}$};
\draw (311.96,116.87) node [scale=0.7]  {$e_{10}$};
\draw (346.89,33.53) node [scale=0.9,rotate=-17.84]  {$\dotsc $};
\draw (343.48,50.57) node [scale=0.9,rotate=-17.84]  {$\dotsc $};
\draw (340.93,65.05) node [scale=0.9,rotate=-17.84]  {$\dotsc $};
\draw (334.96,82.94) node [scale=0.9,rotate=-17.84]  {$\dotsc $};
\draw (331.56,99.12) node [scale=0.9,rotate=-17.84]  {$\dotsc $};
\draw (329,122.12) node [scale=0.9,rotate=-17.84]  {$\dotsc $};

\end{tikzpicture}

\caption{The directed graph $\Gamma$ associated to $f$.}  
\end{figure}

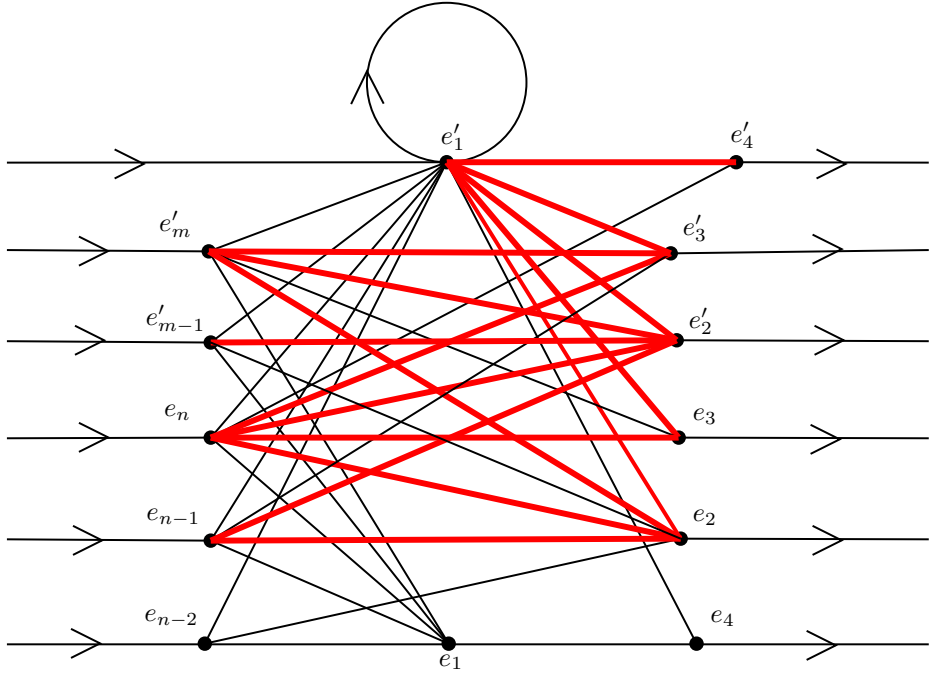
\begin{figure}[!ht]\centering

\tikzset{every picture/.style={line width=0.75pt}} 

\begin{tikzpicture}[x=0.75pt,y=0.75pt,yscale=-1,xscale=1]

\draw    (87,329.55) -- (551.93,329.55) ;

\draw  [fill={rgb, 255:red, 0; green, 0; blue, 0 }  ,fill opacity=1 ] (305.67,86.22) .. controls (305.67,84.49) and (307.07,83.08) .. (308.8,83.08) .. controls (310.53,83.08) and (311.93,84.49) .. (311.93,86.22) .. controls (311.93,87.95) and (310.53,89.35) .. (308.8,89.35) .. controls (307.07,89.35) and (305.67,87.95) .. (305.67,86.22) -- cycle ;
\draw  [fill={rgb, 255:red, 0; green, 0; blue, 0 }  ,fill opacity=1 ] (451.67,86.22) .. controls (451.67,84.49) and (453.07,83.08) .. (454.8,83.08) .. controls (456.53,83.08) and (457.93,84.49) .. (457.93,86.22) .. controls (457.93,87.95) and (456.53,89.35) .. (454.8,89.35) .. controls (453.07,89.35) and (451.67,87.95) .. (451.67,86.22) -- cycle ;
\draw  [fill={rgb, 255:red, 0; green, 0; blue, 0 }  ,fill opacity=1 ] (183.67,329.22) .. controls (183.67,327.49) and (185.07,326.08) .. (186.8,326.08) .. controls (188.53,326.08) and (189.93,327.49) .. (189.93,329.22) .. controls (189.93,330.95) and (188.53,332.35) .. (186.8,332.35) .. controls (185.07,332.35) and (183.67,330.95) .. (183.67,329.22) -- cycle ;
\draw  [fill={rgb, 255:red, 0; green, 0; blue, 0 }  ,fill opacity=1 ] (306.67,329.22) .. controls (306.67,327.49) and (308.07,326.08) .. (309.8,326.08) .. controls (311.53,326.08) and (312.93,327.49) .. (312.93,329.22) .. controls (312.93,330.95) and (311.53,332.35) .. (309.8,332.35) .. controls (308.07,332.35) and (306.67,330.95) .. (306.67,329.22) -- cycle ;
\draw  [fill={rgb, 255:red, 0; green, 0; blue, 0 }  ,fill opacity=1 ] (431.67,329.22) .. controls (431.67,327.49) and (433.07,326.08) .. (434.8,326.08) .. controls (436.53,326.08) and (437.93,327.49) .. (437.93,329.22) .. controls (437.93,330.95) and (436.53,332.35) .. (434.8,332.35) .. controls (433.07,332.35) and (431.67,330.95) .. (431.67,329.22) -- cycle ;
\draw  [fill={rgb, 255:red, 0; green, 0; blue, 0 }  ,fill opacity=1 ] (185.67,131.22) .. controls (185.67,129.49) and (187.07,128.08) .. (188.8,128.08) .. controls (190.53,128.08) and (191.93,129.49) .. (191.93,131.22) .. controls (191.93,132.95) and (190.53,134.35) .. (188.8,134.35) .. controls (187.07,134.35) and (185.67,132.95) .. (185.67,131.22) -- cycle ;
\draw  [fill={rgb, 255:red, 0; green, 0; blue, 0 }  ,fill opacity=1 ] (186.67,177.22) .. controls (186.67,175.49) and (188.07,174.08) .. (189.8,174.08) .. controls (191.53,174.08) and (192.93,175.49) .. (192.93,177.22) .. controls (192.93,178.95) and (191.53,180.35) .. (189.8,180.35) .. controls (188.07,180.35) and (186.67,178.95) .. (186.67,177.22) -- cycle ;
\draw  [fill={rgb, 255:red, 0; green, 0; blue, 0 }  ,fill opacity=1 ] (186.67,225.22) .. controls (186.67,223.49) and (188.07,222.08) .. (189.8,222.08) .. controls (191.53,222.08) and (192.93,223.49) .. (192.93,225.22) .. controls (192.93,226.95) and (191.53,228.35) .. (189.8,228.35) .. controls (188.07,228.35) and (186.67,226.95) .. (186.67,225.22) -- cycle ;
\draw  [fill={rgb, 255:red, 0; green, 0; blue, 0 }  ,fill opacity=1 ] (186.67,277.22) .. controls (186.67,275.49) and (188.07,274.08) .. (189.8,274.08) .. controls (191.53,274.08) and (192.93,275.49) .. (192.93,277.22) .. controls (192.93,278.95) and (191.53,280.35) .. (189.8,280.35) .. controls (188.07,280.35) and (186.67,278.95) .. (186.67,277.22) -- cycle ;
\draw  [fill={rgb, 255:red, 0; green, 0; blue, 0 }  ,fill opacity=1 ] (418.67,132.22) .. controls (418.67,130.49) and (420.07,129.08) .. (421.8,129.08) .. controls (423.53,129.08) and (424.93,130.49) .. (424.93,132.22) .. controls (424.93,133.95) and (423.53,135.35) .. (421.8,135.35) .. controls (420.07,135.35) and (418.67,133.95) .. (418.67,132.22) -- cycle ;
\draw  [fill={rgb, 255:red, 0; green, 0; blue, 0 }  ,fill opacity=1 ] (421.67,176.08) .. controls (421.67,174.35) and (423.07,172.95) .. (424.8,172.95) .. controls (426.53,172.95) and (427.93,174.35) .. (427.93,176.08) .. controls (427.93,177.81) and (426.53,179.22) .. (424.8,179.22) .. controls (423.07,179.22) and (421.67,177.81) .. (421.67,176.08) -- cycle ;
\draw    (189.8,225.22) -- (454.8,86.22) ;

\draw  [fill={rgb, 255:red, 0; green, 0; blue, 0 }  ,fill opacity=1 ] (422.67,225.22) .. controls (422.67,223.49) and (424.07,222.08) .. (425.8,222.08) .. controls (427.53,222.08) and (428.93,223.49) .. (428.93,225.22) .. controls (428.93,226.95) and (427.53,228.35) .. (425.8,228.35) .. controls (424.07,228.35) and (422.67,226.95) .. (422.67,225.22) -- cycle ;
\draw  [fill={rgb, 255:red, 0; green, 0; blue, 0 }  ,fill opacity=1 ] (423.67,276.22) .. controls (423.67,274.49) and (425.07,273.08) .. (426.8,273.08) .. controls (428.53,273.08) and (429.93,274.49) .. (429.93,276.22) .. controls (429.93,277.95) and (428.53,279.35) .. (426.8,279.35) .. controls (425.07,279.35) and (423.67,277.95) .. (423.67,276.22) -- cycle ;
\draw    (308.8,86.22) -- (434.8,329.22) ;

\draw   (268.57,45.98) .. controls (268.57,23.76) and (286.58,5.75) .. (308.8,5.75) .. controls (331.02,5.75) and (349.03,23.76) .. (349.03,45.98) .. controls (349.03,68.2) and (331.02,86.22) .. (308.8,86.22) .. controls (286.58,86.22) and (268.57,68.2) .. (268.57,45.98) -- cycle ;
\draw    (188.8,131.22) -- (308.8,86.22) ;

\draw    (308.8,86.22) -- (189.8,177.22) ;

\draw    (189.8,225.22) -- (308.8,86.22) ;

\draw    (308.8,86.22) -- (189.8,277.22) ;

\draw    (186.8,329.22) -- (308.8,86.22) ;

\draw [color={rgb, 255:red, 255; green, 0; blue, 0 }  ,draw opacity=1 ][line width=2.25]    (308.8,86.22) -- (421.8,132.22) ;

\draw [color={rgb, 255:red, 255; green, 0; blue, 0 }  ,draw opacity=1 ][line width=2.25]    (308.8,86.22) -- (424.8,176.08) ;

\draw [color={rgb, 255:red, 255; green, 0; blue, 0 }  ,draw opacity=1 ][line width=2.25]    (308.8,86.22) -- (425.8,225.22) ;

\draw [color={rgb, 255:red, 255; green, 0; blue, 0 }  ,draw opacity=1 ][line width=1.5]    (426.8,276.22) -- (308.8,86.22) ;

\draw    (188.8,131.22) -- (309.8,329.22) ;

\draw    (189.8,177.22) -- (309.8,329.22) ;

\draw    (189.8,225.22) -- (309.8,329.22) ;

\draw    (189.8,277.22) -- (309.8,329.22) ;

\draw [color={rgb, 255:red, 255; green, 0; blue, 0 }  ,draw opacity=1 ][line width=2.25]    (188.8,131.22) -- (424.8,176.08) ;

\draw    (188.8,131.22) -- (425.8,225.22) ;

\draw [color={rgb, 255:red, 255; green, 0; blue, 0 }  ,draw opacity=1 ][line width=2.25]    (188.8,131.22) -- (426.8,276.22) ;

\draw [color={rgb, 255:red, 255; green, 0; blue, 0 }  ,draw opacity=1 ][line width=2.25]    (189.8,225.22) -- (421.8,132.22) ;

\draw [color={rgb, 255:red, 255; green, 0; blue, 0 }  ,draw opacity=1 ][line width=2.25]    (189.8,225.22) -- (424.8,176.08) ;

\draw [color={rgb, 255:red, 255; green, 0; blue, 0 }  ,draw opacity=1 ][line width=2.25]    (189.8,225.22) -- (308.46,250.75) -- (426.8,276.22) ;

\draw    (189.8,277.22) -- (421.8,132.22) ;

\draw [color={rgb, 255:red, 255; green, 0; blue, 0 }  ,draw opacity=1 ][line width=2.25]    (189.8,277.22) -- (424.8,176.08) ;

\draw    (186.8,329.22) -- (426.8,276.22) ;

\draw   (260.59,56.87) -- (268.59,40.33) -- (277.02,56.66) ;
\draw   (141.33,79.75) -- (156,87.08) -- (141.33,94.42) ;
\draw   (123.33,123.75) -- (138,131.08) -- (123.33,138.42) ;
\draw   (121.33,168.75) -- (136,176.08) -- (121.33,183.42) ;
\draw   (123.33,217.75) -- (138,225.08) -- (123.33,232.42) ;
\draw   (120.33,269.75) -- (135,277.08) -- (120.33,284.42) ;
\draw   (119.33,321.75) -- (134,329.08) -- (119.33,336.42) ;
\draw   (490.33,322.75) -- (505,330.08) -- (490.33,337.42) ;
\draw   (493.33,269.75) -- (508,277.08) -- (493.33,284.42) ;
\draw   (492.33,218.75) -- (507,226.08) -- (492.33,233.42) ;
\draw   (494.33,168.75) -- (509,176.08) -- (494.33,183.42) ;
\draw   (493.33,122.75) -- (508,130.08) -- (493.33,137.42) ;
\draw   (495.33,79.75) -- (510,87.08) -- (495.33,94.42) ;
\draw [color={rgb, 255:red, 255; green, 0; blue, 0 }  ,draw opacity=1 ][line width=2.25]    (308.8,86.22) -- (454.8,86.22) ;

\draw [color={rgb, 255:red, 255; green, 0; blue, 0 }  ,draw opacity=1 ][line width=2.25]    (188.8,131.22) -- (421.8,132.22) ;

\draw    (551.93,130.55) -- (421.8,132.22) ;

\draw    (87,130.55) -- (188.8,131.22) ;

\draw [color={rgb, 255:red, 255; green, 0; blue, 0 }  ,draw opacity=1 ][line width=2.25]    (189.8,177.22) -- (424.8,176.08) ;

\draw    (87,176.55) -- (189.8,177.22) ;

\draw    (551.93,176.55) -- (424.8,176.08) ;

\draw    (87,86.42) -- (308.8,86.22) ;

\draw    (454.8,86.22) -- (551.93,86.42) ;

\draw    (87,225.55) -- (189.8,225.22) ;

\draw [color={rgb, 255:red, 255; green, 0; blue, 0 }  ,draw opacity=1 ][line width=2.25]    (189.8,225.22) -- (425.8,225.22) ;

\draw    (425.8,225.22) -- (551.93,225.55) ;

\draw    (87,276.55) -- (189.8,277.22) ;

\draw [color={rgb, 255:red, 255; green, 0; blue, 0 }  ,draw opacity=1 ][line width=2.25]    (426.8,276.22) -- (189.8,277.22) ;

\draw    (426.8,276.22) -- (551.93,276.55) ;

\draw    (189.8,177.22) -- (426.8,276.22) ;

\draw (312,73.42) node   {$e'_{1}$};
\draw (458,72.42) node   {$e'_{4}$};
\draw (172,117.42) node   {$e'_{m}$};
\draw (434,119.42) node   {$e'_{3}$};
\draw (437,166.42) node   {$e'_{2}$};
\draw (172,165.42) node   {$e'_{m-1}$};
\draw (173,212.42) node   {$e_{n}$};
\draw (171,265.42) node   {$e_{n-1}$};
\draw (170,316.42) node   {$e_{n-2}$};
\draw (439,214.42) node   {$e_{3}$};
\draw (439,265.42) node   {$e_{2}$};
\draw (448,316.42) node   {$e_{4}$};
\draw (311,338.42) node   {$e_{1}$};

\end{tikzpicture}

\caption{$D$: edges marked thick denote two directed edges between corresponding vertices}  
\end{figure}

Assume $n=m$, we get the directed graph $\Gamma$ associated to $f$ (or $g$) and $T$ (with only the contributing edges) as shown in Figure 8. The graph is made of 6 big ``loops" going clockwise, together with a subgraph $D$. The subgraph $D$ is given by the relations determined by $g$ above, as shown in Figure 9, containing one loop at $e\textprime _1$. For simplicity, the graph of $D$ in Figure 9 omits the arrows in between. All edges with omitted arrows implicitly point from left to right. The edges marked thick mean there are two edges connecting those vertices. Thus, a path with given length passing through $D$ once will either
\begin{itemize}
\item directly go from left to right with length 1.
\item go from left to $e\textprime_1$, then wrap around the loop at $e\textprime_1$ some number of times, then go to the right.
\item pass $e_1$ and go to $e_4$. 
\end{itemize}
Given two vertices, the number of paths of length $\frac{n}{13}$ between them which passes through $D$ is therefore at most 2.

Now we let $\Sigma_0$ surround $V_{\lfloor \frac{n}{2} \rfloor-1}, V_{\lfloor \frac{n}{2} \rfloor},V_{\lfloor \frac{n}{2} \rfloor+1}$, fix $h_k$ and consider a graph map $g_k\simeq h_kf$ and its matrix $T_k$. Note that $h_k$ is supported in a neighborhood of $\Sigma_0$. Let $a_j, a_{j+1}, a_{j+2}$ denote the three loops wrapping around the three punctures in $\Sigma_0$. If we remove all the non-contributing edges, after homotopy, $h_k$ sends $e_j,e_{j+1}, e_{j+2}$ to a combination of $e_j,e_{j+1}, e_{j+2}$ without acting on other edges. Thus $g_k\simeq h_kf$ sends $e_{j-3},e_{j-2}, e_{j-1}$ to a combination of $e_j,e_{j+1}, e_{j+2}$ and acts on the rest of the edges as $g\simeq f$ does. 

Then we get the directed graph $\Gamma_k$ associated to $T_k$ and $g_k$ as shown in Figure 10. The graph $\Gamma_k$ is the same as $\Gamma$ away from $e_{j-3},e_{j-2},  e_{j-1},e_j,e_{j+1}, e_{j+2}$, and has a subgraph $D_k$ given by $h_k$. The subgraph $D_k$ is a bipartite graph with 3 vertices in each set, $\{e_j,e_{j+1}, e_{j+2}\}$ and $\{e_{j-3},e_{j-2}, e_{j-1\}}$. All edges of $D_k$ point from right to left, from $\{e_{j-3},e_{j-2}, e_{j-1}\}$ to $\{e_j,e_{j+1}, e_{j+2}\}$. The number of edges between any two vertices in different sets is bounded above by some $E_k>0$ depending on $h_k$. See Figure 11.

When $n=m$ is big enough, any path of length $\frac{n}{13}$ can't intersect $D$ and $D_k$ simultaneously. Thus given any two vertices, the number of paths of length $\frac{n}{13}$ between those vertices is bounded above by $N_k=\max\{2, E_k\}$. The number of paths of length $\frac{n}{13}$ emanating from a given vertex is thus at most $2nN_k$. Then for $\lambda_0$, the leading eigenvalue of $T_k$, by Proposition \ref{pf}, we have
\[\log \lambda_0 \leq \frac{\log 2nN_k}{\frac{n}{13}}.\]

When $n>N_k$ is large enough, we have
\[
\log \lambda_0 \leq \frac{\log 2nN_k}{\frac{n}{13}} < \frac{2\log (2n+2)}{\frac{2n}{26}} < \frac{2\log (2n+2)}{\frac{2n+2}{27}}=54\frac{\log (2n+2)}{2n+2}.
\]
The result follows since $\lambda(h_kf)\leq \lambda_0$.
\end{proof}
\begin{figure}[H]\centering

\tikzset{every picture/.style={line width=0.75pt}} 

\begin{tikzpicture}[x=0.75pt,y=0.75pt,yscale=-1,xscale=1]

\draw   (7,151.45) .. controls (7,82.01) and (105.56,25.73) .. (227.13,25.73) .. controls (348.71,25.73) and (447.27,82.01) .. (447.27,151.45) .. controls (447.27,220.88) and (348.71,277.17) .. (227.13,277.17) .. controls (105.56,277.17) and (7,220.88) .. (7,151.45) -- cycle ;
\draw   (31.29,151.45) .. controls (31.29,89.68) and (118.97,39.6) .. (227.13,39.6) .. controls (335.3,39.6) and (422.98,89.68) .. (422.98,151.45) .. controls (422.98,213.22) and (335.3,263.3) .. (227.13,263.3) .. controls (118.97,263.3) and (31.29,213.22) .. (31.29,151.45) -- cycle ;
\draw   (54.74,151.45) .. controls (54.74,97.07) and (131.92,52.99) .. (227.13,52.99) .. controls (322.34,52.99) and (399.53,97.07) .. (399.53,151.45) .. controls (399.53,205.82) and (322.34,249.9) .. (227.13,249.9) .. controls (131.92,249.9) and (54.74,205.82) .. (54.74,151.45) -- cycle ;
\draw   (80.28,151.45) .. controls (80.28,105.13) and (146.03,67.58) .. (227.13,67.58) .. controls (308.23,67.58) and (373.98,105.13) .. (373.98,151.45) .. controls (373.98,197.77) and (308.23,235.31) .. (227.13,235.31) .. controls (146.03,235.31) and (80.28,197.77) .. (80.28,151.45) -- cycle ;
\draw   (106.42,151.45) .. controls (106.42,113.37) and (160.47,82.51) .. (227.13,82.51) .. controls (293.8,82.51) and (347.85,113.37) .. (347.85,151.45) .. controls (347.85,189.52) and (293.8,220.39) .. (227.13,220.39) .. controls (160.47,220.39) and (106.42,189.52) .. (106.42,151.45) -- cycle ;
\draw   (133.69,151.45) .. controls (133.69,121.98) and (175.53,98.08) .. (227.13,98.08) .. controls (278.74,98.08) and (320.57,121.98) .. (320.57,151.45) .. controls (320.57,180.92) and (278.74,204.81) .. (227.13,204.81) .. controls (175.53,204.81) and (133.69,180.92) .. (133.69,151.45) -- cycle ;
\draw  [fill={rgb, 255:red, 255; green, 255; blue, 255 }  ,fill opacity=1 ] (194.83,8.83) -- (255.6,8.83) -- (255.6,115.17) -- (194.83,115.17) -- cycle ;
\draw  [fill={rgb, 255:red, 255; green, 255; blue, 255 }  ,fill opacity=1 ] (195.61,194.82) -- (255.31,194.82) -- (255.31,241.25) -- (195.61,241.25) -- cycle ;
\draw  [color={rgb, 255:red, 0; green, 0; blue, 0 }  ][line width=2.25] [line join = round][line cap = round] (128.39,38.65) .. controls (128.39,38.65) and (128.39,38.65) .. (128.39,38.65) ;
\draw  [color={rgb, 255:red, 0; green, 0; blue, 0 }  ][line width=2.25] [line join = round][line cap = round] (161.61,30.98) .. controls (161.61,30.98) and (161.61,30.98) .. (161.61,30.98) ;
\draw  [color={rgb, 255:red, 0; green, 0; blue, 0 }  ][line width=2.25] [line join = round][line cap = round] (163.31,45.46) .. controls (163.31,45.46) and (163.31,45.46) .. (163.31,45.46) ;
\draw  [color={rgb, 255:red, 0; green, 0; blue, 0 }  ][line width=2.25] [line join = round][line cap = round] (130.09,53.98) .. controls (130.09,53.98) and (130.09,53.98) .. (130.09,53.98) ;
\draw  [color={rgb, 255:red, 0; green, 0; blue, 0 }  ][line width=2.25] [line join = round][line cap = round] (165.02,59.09) .. controls (165.02,59.09) and (165.02,59.09) .. (165.02,59.09) ;
\draw  [color={rgb, 255:red, 0; green, 0; blue, 0 }  ][line width=2.25] [line join = round][line cap = round] (133.5,68.46) .. controls (133.5,68.46) and (133.5,68.46) .. (133.5,68.46) ;
\draw  [color={rgb, 255:red, 0; green, 0; blue, 0 }  ][line width=2.25] [line join = round][line cap = round] (165.87,75.28) .. controls (165.87,75.28) and (165.87,75.28) .. (165.87,75.28) ;
\draw  [color={rgb, 255:red, 0; green, 0; blue, 0 }  ][line width=2.25] [line join = round][line cap = round] (136.91,85.5) .. controls (136.91,85.5) and (136.91,85.5) .. (136.91,85.5) ;
\draw  [color={rgb, 255:red, 0; green, 0; blue, 0 }  ][line width=2.25] [line join = round][line cap = round] (166.72,91.46) .. controls (166.72,91.46) and (166.72,91.46) .. (166.72,91.46) ;
\draw  [color={rgb, 255:red, 0; green, 0; blue, 0 }  ][line width=2.25] [line join = round][line cap = round] (142.02,102.54) .. controls (142.02,102.54) and (142.02,102.54) .. (142.02,102.54) ;
\draw  [color={rgb, 255:red, 0; green, 0; blue, 0 }  ][line width=2.25] [line join = round][line cap = round] (169.28,109.35) .. controls (169.28,109.35) and (169.28,109.35) .. (169.28,109.35) ;
\draw  [color={rgb, 255:red, 0; green, 0; blue, 0 }  ][line width=2.25] [line join = round][line cap = round] (145.43,125.54) .. controls (145.43,125.54) and (145.43,125.54) .. (145.43,125.54) ;
\draw  [color={rgb, 255:red, 0; green, 0; blue, 0 }  ][line width=2.25] [line join = round][line cap = round] (325.02,38.65) .. controls (325.02,38.65) and (325.02,38.65) .. (325.02,38.65) ;
\draw  [color={rgb, 255:red, 0; green, 0; blue, 0 }  ][line width=2.25] [line join = round][line cap = round] (291.8,30.98) .. controls (291.8,30.98) and (291.8,30.98) .. (291.8,30.98) ;
\draw  [color={rgb, 255:red, 0; green, 0; blue, 0 }  ][line width=2.25] [line join = round][line cap = round] (290.1,45.46) .. controls (290.1,45.46) and (290.1,45.46) .. (290.1,45.46) ;
\draw  [color={rgb, 255:red, 0; green, 0; blue, 0 }  ][line width=2.25] [line join = round][line cap = round] (323.32,53.98) .. controls (323.32,53.98) and (323.32,53.98) .. (323.32,53.98) ;
\draw  [color={rgb, 255:red, 0; green, 0; blue, 0 }  ][line width=2.25] [line join = round][line cap = round] (288.4,59.09) .. controls (288.4,59.09) and (288.4,59.09) .. (288.4,59.09) ;
\draw  [color={rgb, 255:red, 0; green, 0; blue, 0 }  ][line width=2.25] [line join = round][line cap = round] (319.91,68.46) .. controls (319.91,68.46) and (319.91,68.46) .. (319.91,68.46) ;
\draw  [color={rgb, 255:red, 0; green, 0; blue, 0 }  ][line width=2.25] [line join = round][line cap = round] (287.54,75.28) .. controls (287.54,75.28) and (287.54,75.28) .. (287.54,75.28) ;
\draw  [color={rgb, 255:red, 0; green, 0; blue, 0 }  ][line width=2.25] [line join = round][line cap = round] (316.51,84.65) .. controls (316.51,84.65) and (316.51,84.65) .. (316.51,84.65) ;
\draw  [color={rgb, 255:red, 0; green, 0; blue, 0 }  ][line width=2.25] [line join = round][line cap = round] (286.69,91.46) .. controls (286.69,91.46) and (286.69,91.46) .. (286.69,91.46) ;
\draw  [color={rgb, 255:red, 0; green, 0; blue, 0 }  ][line width=2.25] [line join = round][line cap = round] (311.4,102.54) .. controls (311.4,102.54) and (311.4,102.54) .. (311.4,102.54) ;
\draw  [color={rgb, 255:red, 0; green, 0; blue, 0 }  ][line width=2.25] [line join = round][line cap = round] (284.14,109.35) .. controls (284.14,109.35) and (284.14,109.35) .. (284.14,109.35) ;
\draw  [color={rgb, 255:red, 0; green, 0; blue, 0 }  ][line width=2.25] [line join = round][line cap = round] (308.84,125.54) .. controls (308.84,125.54) and (308.84,125.54) .. (308.84,125.54) ;
\draw   (174.54,26.04) -- (182.39,27.99) -- (176.12,33.11) ;
\draw   (139.61,32) -- (147.47,33.95) -- (141.19,39.07) ;
\draw   (174.54,40.52) -- (182.39,42.47) -- (176.12,47.59) ;
\draw   (139.61,47.34) -- (147.47,49.28) -- (141.19,54.4) ;
\draw   (174.54,54.15) -- (182.39,56.1) -- (176.12,61.22) ;
\draw   (143.02,60.97) -- (150.88,62.91) -- (144.6,68.03) ;
\draw   (174.54,69.48) -- (182.39,71.43) -- (176.12,76.55) ;
\draw   (145.15,77.99) -- (153.23,78.47) -- (147.99,84.64) ;
\draw   (175.39,85.67) -- (183.25,87.62) -- (176.97,92.74) ;
\draw   (175.13,103.16) -- (183.15,104.3) -- (177.43,110.03) ;
\draw   (147.65,95.15) -- (155.74,95.42) -- (150.67,101.73) ;
\draw   (151.68,115.6) -- (159.69,114.47) -- (155.79,121.56) ;
\draw   (268.07,25.18) -- (275.33,28.76) -- (268.12,32.42) ;
\draw   (305.31,30.62) -- (311.92,35.29) -- (304.22,37.78) ;
\draw   (269.45,38.35) -- (276.15,42.89) -- (268.5,45.52) ;
\draw   (267.75,51.98) -- (274.45,56.52) -- (266.8,59.15) ;
\draw   (266.9,67.31) -- (273.6,71.85) -- (265.94,74.49) ;
\draw   (269.45,83.49) -- (276.15,88.03) -- (268.5,90.67) ;
\draw   (268.29,100.17) -- (274.36,105.52) -- (266.43,107.17) ;
\draw   (304.9,44.81) -- (311,50.13) -- (303.08,51.82) ;
\draw   (301.52,58.42) -- (307.58,63.79) -- (299.65,65.42) ;
\draw   (299.1,74.53) -- (305,80.08) -- (297.02,81.47) ;
\draw   (297.86,91.35) -- (303.17,97.46) -- (295.1,98.04) ;
\draw   (294.79,110.82) -- (299.66,117.28) -- (291.56,117.3) ;

\draw (225.93,57.74) node   {$D$};
\draw (227.13,218.15) node   {$D_{k}$};
\draw (162.04,23.31) node [scale=0.7]  {$e'_{m-2}$};
\draw (124.56,30.98) node [scale=0.7]  {$e'_{m-5}$};
\draw (162.04,38.65) node [scale=0.7]  {$e'_{m-3}$};
\draw (126.26,47.17) node [scale=0.7]  {$e'_{m-6}$};
\draw (129.67,61.65) node [scale=0.7]  {$e'_{m-7}$};
\draw (161.19,53.13) node [scale=0.7]  {$e'_{m-4}$};
\draw (163.74,68.46) node [scale=0.7]  {$e_{n-3}$};
\draw (133.93,77.83) node [scale=0.7]  {$e_{n-6}$};
\draw (138.19,94.02) node [scale=0.7]  {$e_{n-7}$};
\draw (164.59,84.65) node [scale=0.7]  {$e_{n-4}$};
\draw (168,100.83) node [scale=0.7]  {$e_{n-5}$};
\draw (139.89,116.17) node [scale=0.7]  {$e_{n-8}$};
\draw (93.89,36.23) node [scale=0.9,rotate=-335.11]  {$\dotsc $};
\draw (99.85,53.27) node [scale=0.9,rotate=-335.11]  {$\dotsc $};
\draw (104.96,67.75) node [scale=0.9,rotate=-335.11]  {$\dotsc $};
\draw (110.07,83.09) node [scale=0.9,rotate=-335.11]  {$\dotsc $};
\draw (116.04,102.68) node [scale=0.9,rotate=-335.11]  {$\dotsc $};
\draw (125.41,125.68) node [scale=0.9,rotate=-335.11]  {$\dotsc $};
\draw (291.52,23.31) node [scale=0.7]  {$e'_{7}$};
\draw (325.59,30.98) node [scale=0.7]  {$e'_{10}$};
\draw (291.52,38.65) node [scale=0.7]  {$e'_{6}$};
\draw (324.74,48.02) node [scale=0.7]  {$e'_{9}$};
\draw (288.96,52.28) node [scale=0.7]  {$e'_{5}$};
\draw (322.19,61.65) node [scale=0.7]  {$e'_{8}$};
\draw (288.11,68.46) node [scale=0.7]  {$e_{6}$};
\draw (317.93,78.69) node [scale=0.7]  {$e_{9}$};
\draw (287.26,84.65) node [scale=0.7]  {$e_{5}$};
\draw (312.81,95.72) node [scale=0.7]  {$e_{8}$};
\draw (284.7,101.69) node [scale=0.7]  {$e_{7}$};
\draw (311.96,117.87) node [scale=0.7]  {$e_{10}$};
\draw (346.89,34.53) node [scale=0.9,rotate=-17.84]  {$\dotsc $};
\draw (343.48,51.57) node [scale=0.9,rotate=-17.84]  {$\dotsc $};
\draw (340.93,66.05) node [scale=0.9,rotate=-17.84]  {$\dotsc $};
\draw (334.96,83.94) node [scale=0.9,rotate=-17.84]  {$\dotsc $};
\draw (331.56,100.12) node [scale=0.9,rotate=-17.84]  {$\dotsc $};
\draw (329,123.12) node [scale=0.9,rotate=-17.84]  {$\dotsc $};

\end{tikzpicture}

\caption{The directed graph $\Gamma_k$ associated to $h_kf$.}  
\end{figure}
\begin{figure}[H]\centering

\tikzset{every picture/.style={line width=0.75pt}} 

\begin{tikzpicture}[x=0.75pt,y=0.75pt,yscale=-1,xscale=1]

\draw    (104.83,239.98) -- (520.93,239.98) ;

\draw  [fill={rgb, 255:red, 0; green, 0; blue, 0 }  ,fill opacity=1 ] (191.35,239.68) .. controls (191.35,238.13) and (192.6,236.87) .. (194.15,236.87) .. controls (195.7,236.87) and (196.96,238.13) .. (196.96,239.68) .. controls (196.96,241.23) and (195.7,242.48) .. (194.15,242.48) .. controls (192.6,242.48) and (191.35,241.23) .. (191.35,239.68) -- cycle ;
\draw  [fill={rgb, 255:red, 0; green, 0; blue, 0 }  ,fill opacity=1 ] (413.3,239.68) .. controls (413.3,238.13) and (414.55,236.87) .. (416.1,236.87) .. controls (417.65,236.87) and (418.91,238.13) .. (418.91,239.68) .. controls (418.91,241.23) and (417.65,242.48) .. (416.1,242.48) .. controls (414.55,242.48) and (413.3,241.23) .. (413.3,239.68) -- cycle ;
\draw  [fill={rgb, 255:red, 0; green, 0; blue, 0 }  ,fill opacity=1 ] (194.03,146.6) .. controls (194.03,145.05) and (195.29,143.8) .. (196.84,143.8) .. controls (198.38,143.8) and (199.64,145.05) .. (199.64,146.6) .. controls (199.64,148.15) and (198.38,149.41) .. (196.84,149.41) .. controls (195.29,149.41) and (194.03,148.15) .. (194.03,146.6) -- cycle ;
\draw  [fill={rgb, 255:red, 0; green, 0; blue, 0 }  ,fill opacity=1 ] (194.03,193.14) .. controls (194.03,191.59) and (195.29,190.34) .. (196.84,190.34) .. controls (198.38,190.34) and (199.64,191.59) .. (199.64,193.14) .. controls (199.64,194.69) and (198.38,195.94) .. (196.84,195.94) .. controls (195.29,195.94) and (194.03,194.69) .. (194.03,193.14) -- cycle ;
\draw  [fill={rgb, 255:red, 0; green, 0; blue, 0 }  ,fill opacity=1 ] (405.24,146.6) .. controls (405.24,145.05) and (406.5,143.8) .. (408.05,143.8) .. controls (409.6,143.8) and (410.85,145.05) .. (410.85,146.6) .. controls (410.85,148.15) and (409.6,149.41) .. (408.05,149.41) .. controls (406.5,149.41) and (405.24,148.15) .. (405.24,146.6) -- cycle ;
\draw  [fill={rgb, 255:red, 0; green, 0; blue, 0 }  ,fill opacity=1 ] (406.14,192.25) .. controls (406.14,190.7) and (407.39,189.44) .. (408.94,189.44) .. controls (410.49,189.44) and (411.75,190.7) .. (411.75,192.25) .. controls (411.75,193.79) and (410.49,195.05) .. (408.94,195.05) .. controls (407.39,195.05) and (406.14,193.79) .. (406.14,192.25) -- cycle ;
\draw [color={rgb, 255:red, 0; green, 0; blue, 0 }  ,draw opacity=1 ]   (196.84,146.6) -- (408.94,192.25) ;

\draw    (194.15,239.68) -- (408.94,192.25) ;

\draw   (150.75,152.76) -- (137.36,146.76) -- (150.19,139.65) ;
\draw   (147.9,199.48) -- (134.67,193.13) -- (147.68,186.35) ;
\draw   (146.95,246.07) -- (133.77,239.62) -- (146.84,232.94) ;
\draw   (478.78,247.17) -- (465.81,240.3) -- (479.08,234.04) ;
\draw   (479.88,198.63) -- (466.7,192.19) -- (479.76,185.5) ;
\draw   (480.67,153.09) -- (467.59,146.43) -- (480.77,139.97) ;
\draw    (104.83,146.9) -- (196.84,146.6) ;

\draw [color={rgb, 255:red, 0; green, 0; blue, 0 }  ,draw opacity=1 ]   (196.84,146.6) -- (408.05,146.6) ;

\draw    (408.05,146.6) -- (520.93,146.9) ;

\draw    (104.83,192.54) -- (196.84,193.14) ;

\draw [color={rgb, 255:red, 0; green, 0; blue, 0 }  ,draw opacity=1 ]   (408.94,192.25) -- (196.84,193.14) ;

\draw    (408.94,192.25) -- (520.93,192.54) ;

\draw    (408.05,146.6) -- (196.84,193.14) ;

\draw    (408.05,146.6) -- (194.15,239.68) ;

\draw    (416.1,239.68) -- (196.84,193.14) ;

\draw    (196.84,146.6) -- (416.1,239.68) ;

\draw   (300.76,152.43) -- (289.57,146.66) -- (300.9,141.17) ;
\draw   (270.33,198.97) -- (259.14,193.2) -- (270.47,187.7) ;
\draw   (304.34,245.5) -- (293.15,239.74) -- (304.48,234.24) ;
\draw   (273.47,168.69) -- (263.76,160.68) -- (276,157.72) ;
\draw   (287.57,191.97) -- (280.42,181.6) -- (293,182.1) ;
\draw   (337.13,167.49) -- (324.72,165.37) -- (333.86,156.71) ;
\draw   (345.76,230.57) -- (336.29,222.27) -- (348.61,219.68) ;
\draw   (265.93,230.05) -- (253.86,226.47) -- (263.96,218.96) ;
\draw   (362.43,174.15) -- (349.9,172.9) -- (358.42,163.62) ;

\draw (181.8,135.15) node   {$e_{j+2}$};
\draw (180.01,182.58) node   {$e_{j+1}$};
\draw (179.12,228.22) node   {$e_{j}$};
\draw (419.86,136.94) node   {$e_{j-1}$};
\draw (419.86,182.58) node   {$e_{j-2}$};
\draw (427.92,228.22) node   {$e_{j-3}$};

\end{tikzpicture}

\caption{$D_k$: each directed edge in between represent $\leq E_k$ directed edge.}  
\end{figure}
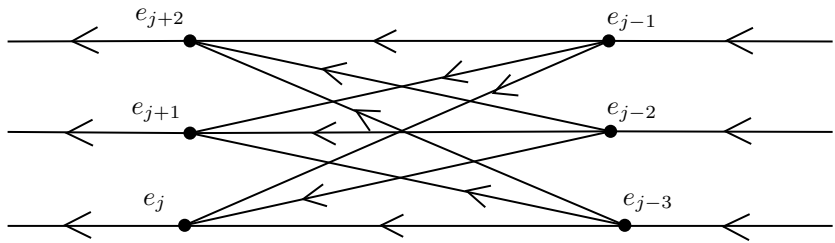

Now we finish Theorem \ref{main}. Part (1) is given by Lemma 2. Part (2) is given by Proposition 3. Part (3) is given by Lemma 4.

\bibliographystyle{amsalpha}
\bibliography{1}

\end{document}